\documentclass[11pt,onecolumn,final]{elsarticle}

\makeatletter
\def\ps@pprintTitle{%
  \let\@oddhead\@empty
  \let\@evenhead\@empty
  \let\@oddfoot\@empty
  \let\@evenfoot\@oddfoot
}
\makeatother

\usepackage{amsmath}       %   AMS equation environments
\usepackage{amssymb}       %   AMS symbol fonts (e.g., \boldsymbol{}
\usepackage{amsthm}

\usepackage{chapterbib}    %   Bibtex
\usepackage{color}
\usepackage{comment}
\usepackage{bm,bbm}

\usepackage{booktabs,siunitx}

\usepackage{empheq}
\usepackage{tikz}
\usepackage{subfig}
\usepackage{graphicx}
\usepackage{adjustbox}
\usepackage{mathrsfs}
\usepackage{relsize}
\usepackage{enumitem}
\usepackage{mathtools} % provides \DeclarePairedDelimiterX
\usepackage{etoolbox} % provides \ifblank
\usepackage{xspace}

\def\tsc#1{\csdef{#1}{\textsc{\lowercase{#1}}\xspace}}
\tsc{WGM}
\tsc{QE}
\tsc{EP}
\tsc{PMS}
\tsc{BEC}
\tsc{DE}

\numberwithin{equation}{section} % Number equations as (section.equation)

\theoremstyle{plain}

\newtheorem{theorem}{Theorem}[section]
\newtheorem{lemma}[theorem]{Lemma}

\newtheorem{proposition}[theorem]{Proposition}

\theoremstyle{definition}

\newtheorem{remark}[theorem]{Remark}

\usepackage{mathtools}
\usepackage{etoolbox}

\DeclarePairedDelimiterX{\normiii}[1]
                        {|||}
                        {|||}
                        {\ifblank{#1}{\cdot}{#1}}

\definecolor{MyDarkGreen}{rgb}{0,0.45,0}

\def\trait #1 #2 #3 {\vrule width #1pt height #2pt depth #3pt}
\def\fin{\hfill
        \trait .3 5 0
        \trait 5 .3 0
        \kern-5pt
        \trait 5 5 -4.7
        \trait 0.3 5 0
\medskip}

\newcommand{\DOFS}[1]{\mbox{\textbf{#1}}}

\newcommand{\INTP}{\footnotesize{I}}

\newcommand{\REAL}{\mathbbm{R}}
\newcommand{\INTG}{\mathbbm{N}}

\newcommand{\restrict}[2]{{#1}_{|{#2}}}

\newcommand{\EOD}{\end{document}}

\newcommand{\fv}{\mathbf{f}}

\newcommand{\mv}{\mathbf{m}}

\newcommand{\uv}{\mathbf{u}}
\newcommand{\vv}{\mathbf{v}}
\newcommand{\wv}{\mathbf{w}}
\newcommand{\xv}{\mathbf{x}}

\newcommand{\Ev}{\mathbf{E}}

\newcommand{\Hv}{\mathbf{H}}

\newcommand{\Vv}{\mathbf{V}}

\newcommand{\as}{a}
\newcommand{\bs}{b}
\newcommand{\cs}{c}

\newcommand{\gs}{g}

\newcommand{\ks}{k}

\newcommand{\ps}{p}
\newcommand{\qs}{q}
\newcommand{\rs}{r}
\renewcommand{\ss}{s}
\newcommand{\ts}{t}

\newcommand{\vs}{v}

\newcommand{\Cs}{C}

\newcommand{\Es}{E}
\newcommand{\Fs}{F}

\newcommand{\Qs}{Q}

\newcommand{\Ss}{S}

\newcommand{\Vs}{V}

\newcommand{\Ys}{Y}

\newcommand{\East}{\Es^\ast}
\newcommand{\gsh}{\gs_{\hh}}
\newcommand{\FsP}{\Fs^{\P}}
\newcommand{\SP}{\Ss^{\P}}

\newcommand{\ass}[1]{a_{#1}}
\newcommand{\bss}[1]{b_{#1}}
\newcommand{\css}[1]{c_{#1}}

\newcommand{\mss}[1]{m_{#1}}

\newcommand{\xss}[1]{x_{#1}}

\newcommand{\Css}[1]{C_{#1}}

\newcommand{\Fss}[1]{F_{#1}}

\newcommand{\Cclem}{\Css{\text{clem}}}
\newcommand{\CclemHat}{\widehat{\Cs}_{\text{clem}}}

\newcommand{\calA}{\mathcal{A}}
\newcommand{\calB}{\mathcal{B}}

\newcommand{\calL}{\mathcal{L}}
\newcommand{\calM}{\mathcal{M}}
\newcommand{\calN}{\mathcal{N}}

\newcommand{\calS}{\mathcal{S}}
\newcommand{\calT}{\mathcal{T}}

\newcommand{\PSPG}{\text{PSPG}}
\newcommand{\calLP}[1]{\calL^{\P}_{#1}}
\newcommand{\calLss}[1]{\calL_{#1}}
\newcommand{\calAP}{\mathcal{A}^{\P}}
\newcommand{\calAPSPG}{\mathcal{A}_{\text{PSPG}}}
\newcommand{\calMk}{\calM_{k}}
\newcommand{\calBh}{\calB_{\hh}}
\newcommand{\calBPh}{\calB^{\P}_{\hh}}
\newcommand{\calTh}{\mathcal{T}^{\hh}}
\newcommand{\calSh}[1]{\mathcal{S}^{\hh}_{#1}}
\newcommand{\calNh}[1]{\mathcal{N}^{\hh}_{#1}}

\newcommand{\HONE}  {H^1}
\newcommand{\HONEzr}{H^1_0}

\newcommand{\LTWO}  {L^2}

\newcommand{\LTWOzr}{L^2_0}

\newcommand{\WS}[2] {W^{#1}_{#2}}
\newcommand{\LS}[1] {L^{#1}}
\newcommand{\HS}[1] {H^{#1}}

\newcommand{\CS}[1] {C^{#1}}

\newcommand{\PS}[1] {\mathbbm{P}_{#1}}

\newcommand{\Vshk}{\Vs^{h}_{k}}

\newcommand{\Vvh}[1]{\Vv_{#1}^{h}}

\newcommand{\Qsh}  {\Qs^{\hh}}

\newcommand{\Vvhpp}[1]{\Vv^{\FT,h}_{k}}

\newcommand{\FT}{\textit{F2}}

\renewcommand{\P} {\textrm{E}}            % polyhedral element
\newcommand  {\E} {\textrm{e}}%%{\textsf{e}}            % edge
\newcommand{\hh}{h}
\newcommand{\Th}{\Omega_{\hh}}

\newcommand{\xvP}{\xv_{\P}}        % element
\newcommand{\hP}{\hh_{\P}}

\newcommand{\mP}{\ABS{\P}}

\newcommand{\mE}{\ABS{\E}}

\newcommand{\NMB}{N}
\newcommand{\dE}{\,d\P}

\newcommand{\ds}{\,ds} %% to be removed

\newcommand{\norP} {\mathbf{n}_{\P}}

\newcommand{\abs}    [1]{|#1|}

\newcommand{\ABS}    [1]{\left|#1\right|}

\newcommand{\snorm}  [2]{|#1|_{#2}}

\newcommand{\NORM}  [2]{|\hspace{-0.2mm}|#1|\hspace{-0.2mm}|_{#2}}

\newcommand{\phiI}{\phi_I}
\newcommand{\psiI}{\psi_I}

\newcommand{\phih}{\phi_{\hh}}
\newcommand{\psih}{\psi_{\hh}}
\newcommand{\phiv}{\boldsymbol{\phi}}

\newcommand{\psiHat}{\widehat{\psi}}

\newcommand{\psihHat}{\widehat{\psi}_{\hh}}
\newcommand{\phihTld}{\widetilde{\phi}_{\hh}}

\newcommand{\PinP}[1]{\Pi^{\nabla,\P}_{#1}}
\newcommand{\PizP}[1]{\Pi^{0,\P}_{#1}}

\newcommand{\PvnP}[1]{\boldsymbol{\Pi}^{\nabla,\P}_{#1}}
\newcommand{\PvzP}[1]{\boldsymbol{\Pi}^{0,\P}_{#1}}

\newcommand{\csh}{\cs_{\hh}}

\newcommand{\asph}{\as_{\ps,\hh}}
\newcommand{\asVh}{\as_{\Vs,\hh}}

\newcommand{\bsPh}{\bs^{\P}_{\hh}}
\newcommand{\csPh}{\cs^{\P}_{\hh}}
\newcommand{\dsPh}{\ds^{\P}_{\hh}}

\newcommand{\bsP}{\bs^{\P}}
\newcommand{\csP}{\cs^{\P}}

\newcommand{\asPV}{\as^{\P}_{\Vs}}

\newcommand{\asPVh}{\as^{\P}_{\Vs,\hh}}
\newcommand{\asPph}{\as^{\P}_{\ps,\hh}}
\newcommand{\csPph}{\cs^{\P}_{\ps,\hh}}

\newcommand{\psh}{p_{\hh}}

\newcommand{\psI}{p_{\INTP}}

\newcommand{\pshHat}{\widehat{\ps}_{\hh}}

\newcommand{\qsh}{\qs_{\hh}}

\newcommand{\uvh} {\uv_{\hh}}

\newcommand{\uvI} {\uv_{\INTP}}

\newcommand{\uvhHat}{\widehat{\uv}_{\hh}}

\newcommand{\vvh} {\vv_{\hh}}

\newcommand{\wvh} {\wv_{\hh}}
\newcommand{\wvI} {\wv_{\INTP}}

\usepackage[margin=2.5cm]{geometry}

\begin{document}

\begin{frontmatter}
  
  % Main title of the paper
  \title{A higher order pressure-stabilized virtual element formulation for the Stokes-Poisson-Boltzmann equations}
  
  % --- Authors ---
  \author[1]{Sudheer Mishra}
  \author[1]{Sundararajan Natarajan}
  \author[2]{E. Natarajan}
  \author[3]{and Gianmarco Manzini \corref{CORR}}
  
  % --- Affiliations ---
  \address[1]{Department of Mechanical Engineering,
    Indian Institute of Technology Madras,
    Chennai 600036, India}
  
  \address[2]{Department of Mathematics,
    Indian Institute of Space Science and Technology,
    Thiruvananthapuram 695547,
    India}
  
  \address[3] {Istituto di Matematica Applicata e Tecnologie
    Informatiche Consiglio Nazionale delle Ricerche, Pavia 27100,
    Italy}

  % ----------------------------
  % Abstract
  % ----------------------------

  %% \input{abstract}
  %% Hey Emacs, this is -*-latex-*-
  %% \renewcommand{\abstractname}{\textcolor{blue}{Abstract}}
  
  \begin{abstract}
    Electrokinetic phenomena in nanopore sensors and microfluidic
    devices require accurate simulation of coupled fluid-electrostatic
    interactions in geometrically complex domains with irregular
    boundaries and adaptive mesh refinement.
    We develop an equal-order virtual element method for the
    Stokes--Poisson--Boltzmann equations that naturally handles general
    polygonal meshes, including meshes with hanging nodes, without
    requiring special treatment or remeshing.
    The key innovation is a residual-based pressure stabilization scheme
    derived by reformulating the Laplacian drag force in the momentum
    equation as a weighted advection term involving the nonlinear
    Poisson--Boltzmann equation, thereby eliminating second-order
    derivative terms while maintaining theoretical rigor.
    Well-posedness of the coupled stabilized problem is established
    using the Banach and Brouwer fixed-point theorems under sufficiently
    small data assumptions, and optimal a priori error estimates are
    derived in the energy norm with convergence rates of order
    $\mathcal{O}(h^k)$ for approximation degree $k \geq 1$.
    Numerical experiments on diverse polygonal meshes---including
    distorted elements, non-convex polygons, Voronoi tessellations,
    and configurations with hanging nodes---confirm optimal convergence
    rates, validating theoretical predictions.
    Applications to electro-osmotic flows in nanopore sensors with
    complex obstacle geometries illustrate the method's practical
    utility for engineering simulations.
    Compared to Taylor--Hood finite element formulations, the
    equal-order approach simplifies implementation through uniform
    polynomial treatment of all fields and offers native support for
    general polygonal elements.
  \end{abstract}

  % Local Variables:
  % mode: latex
  % End:

  \begin{keyword}
    Stokes-Poisson-Boltzmann equation \sep
    Virtual element method            \sep 
    Equal-order approximation   \sep
    Pressure stabilization            \sep 
    Polygonal meshes            \sep
    Electrokinetic flows        \sep
    Nanopore sensors

    \noindent
    \textit{Mathematics Subject Classification}: 65N12, 65N30, 76D07
  \end{keyword}
  
  %% \begin{keyword}
  %%   Stokes-Poisson-Boltzmann equation \sep
  %%   Virtual element method            \sep
  %%   Pressure stabilization methods    \sep
  %%   General polygons \\
  %%   %% \hrule
  %%   %% \vspace{0.1cm}
  %%   %%        {Mathematics Subject Classification}:
  %%   %%        65N12, 65N30, 76D07
  %% \end{keyword}
           
\end{frontmatter}
  
\renewcommand{\arraystretch}{1.}
\raggedbottom

% ----------------------------
% Paper
% ----------------------------

%% \maketitle

%% \input{sec1_introduction}
%% \input{sec2_model_problem}
%% \input{sec3_stabilized_VEM}
%% \input{sec4_theoretical}
%% \input{sec5_numerical_results}
%% \input{sec6_conclusions}

%% Hey Emacs, this is -*-latex-*-

\section{Introduction}
\label{sec-1}

Nanopore sensors have emerged as transformative tools for
single-molecule analysis, DNA sequencing, and ion channel
studies~\cite{Dekker2007}.
The fundamental physics governing these devices involves coupled
fluid-electrostatic interactions: ionic transport through nanoscale
pores is driven by both pressure gradients and applied electric
fields, while electrostatic potentials arise from charge distributions
in thin electrical double layers near solid boundaries.
Accurate numerical simulation of these electrokinetic phenomena
requires solving the Stokes equations coupled with the
Poisson-Boltzmann equation, and the geometric complexity of realistic
nanopore structures—including irregular pore shapes, multiple
obstacles, and interfaces requiring adaptive mesh refinement—poses
significant challenges for conventional numerical methods.

Standard finite element methods (FEM) for the Stokes-Poisson-Boltzmann
(SPB) system typically employ mixed approximation spaces to satisfy
the discrete inf-sup condition.
For instance, Taylor-Hood $\mathbb{P}_{k+1}/\mathbb{P}_k$
elements~\cite{alsohaim2025analysis} use higher-order velocity
approximations to ensure stability.
While theoretically sound, these approaches face practical
limitations: $(i)$ implementation complexity of mixed finite element
spaces with different approximation orders, $(ii)$ difficulty handling
hanging nodes arising from adaptive mesh refinement, and $(iii)$
restricted mesh flexibility when approximating domains with irregular
boundaries.
Virtual element methods (VEM) offer an attractive alternative by
enabling equal-order approximations on general polygonal meshes, with
a projection-based framework that naturally handles arbitrary element
shapes and hanging nodes.

Recent work by AlSohaim, Ruiz-Baier, and
Villa-Fuentes~\cite{alsohaim2025analysis} developed a finite element
formulation for the SPB system using Taylor-Hood spaces on triangular
meshes, establishing theoretical foundations through fixed-point
analysis and deriving optimal error estimates.
While this work provides important theoretical results, the geometric
constraints of conforming triangular meshes and the complexity of
mixed-order implementations motivate exploration of alternative
approaches.
In particular, equal-order approximations and the polytopal
flexibility offered by VEM remain unexplored for this problem class.

Electrokinetic phenomena in charged fluids play a significant role in
micro-scale and nano-scale transport
processes~\cite{Probstein1994,kirby2010,Masliyah2006}, with
applications spanning nanopore design for DNA sequencing, ion
transport modeling in water purification
systems~\cite{Biesheuvel2010}, and microfluidic device
development~\cite{Whitesides2006}.
Among these, electro-osmotic flow—arising from the interaction between
an externally applied electric field and charged species within thin
electric double layers—represents one of the most efficient mechanisms
for inducing fluid motion without mechanical actuation.
In this work, fluid motion is modeled using the Stokes equations with
a forcing term dependent on the electrolyte charge and the applied
electric field.
The fluid velocities considered are not sufficiently strong to affect
the electrostatic potential distribution within the double layer;
consequently, the electric charge density can be directly related to
the potential through the Poisson-Boltzmann equation, supplemented by
an advection term coupling the fluid velocity and potential.

Concerning the Poisson-Boltzmann equation (PBE), finite element
formulations have been discussed
in~\cite{chen2007finite,kraus2020reliable,holst2012adaptive}, with
extension to general polygons in~\cite{Huangvem}.
Following~\cite{alsohaim2025analysis}, we consider the regularized
form of this equation, which preserves the nonlinearity characterized
by a hyperbolic sine function but omits the distributional Dirac
forcing term.
We also incorporate the convection term coupling fluid velocity and
potential, and impose a restriction on the functional space requiring
the double layer potential field to be bounded as discussed
in~\cite{holst2012adaptive}.

Polytopal methods have attracted considerable attention because they
provide flexibility in dealing with complex geometries and irregular
interfaces, where standard methods like FEM or the finite difference
method (FDM) might be computationally costly.
To address the challenge that extending FEM to polytopal meshes
requires explicit construction of shape functions, Beir\~ao da Veiga
et al.~\cite{vem1} introduced the virtual element method (VEM), which
handles arbitrary polygonal/polyhedral meshes without explicit basis
function formulation.
Instead, VEM requires only suitable choices of degrees of freedom to
compute the discrete formulation, enabling easy extension to
higher-order approximations while maintaining robustness under general
mesh types.
VEM has been applied to elasticity~\cite{vem2, mvem19}, Oseen
problem~\cite{adak2024nonconforming,mvem16}, Stokes and Navier-Stokes
problems~\cite{mvem9, mvem11,mvem12,mvem3,adak2021virtual,mishra2025supg},
Maxwell equation~\cite{da2022virtual}, plate bending~\cite{mvem7},
crack propagation~\cite{aldakheel2018phase,benedetto2018virtual},
optimization~\cite{antonietti2017virtual,chi2020virtual}, and
magnetostatics~\cite{vem42}.
For the Stokes problem, Guo and Feng~\cite{vem028} introduced a
projection-based stabilized VEM using equal-order velocity-pressure
pairs, circumventing the discrete inf-sup condition without requiring
second-order derivative terms or coupling terms.
This approach has been extended to the unsteady Navier-Stokes
equations~\cite{mvem14}, the Oseen problem~\cite{mvem15}, and coupled
problems including Stokes-Darcy~\cite{mishra2024unified} and
Stokes-Temperature~\cite{mishra2025equal}.

While VEM has been successfully applied to various coupled
multiphysics problems including Stokes-Darcy flows, Navier-Stokes
equations coupled with temperature, and fluid-structure interaction,
its application to electrokinetic systems described by the SPB
equations remains unexplored.
The SPB coupling introduces specific challenges distinct from
previously studied systems: $(i)$ the nonlinearity through the
hyperbolic sine term $\kappa(\psi) = \alpha_0 \sinh(\alpha_1 \psi)$
requires careful fixed-point analysis; $(ii)$ the potential-dependent
drag forces in the momentum equation couple velocity gradients with
electrostatic gradients; $(iii)$ the strong interaction between
pressure gradients and electrostatic forces necessitates robust
stabilization strategies.
These features distinguish the SPB system from standard Stokes or
Stokes-Darcy problems and require careful formulation of the discrete
problem to maintain both stability and accuracy.

This work presents a novel equal-order virtual element method for the
coupled Stokes-Poisson-Boltzmann equations with several distinguishing
features.
We develop $\mathbb{P}_k/\mathbb{P}_k/\mathbb{P}_k$ ($k \geq 1$)
virtual element approximations for velocity, pressure, and potential
fields, avoiding the implementation complexity of mixed finite element
spaces while maintaining theoretical rigor.
To the best of our knowledge, this is the first equal-order
approximation for the SPB system in both finite element and virtual
element literature.

By reformulating the Laplacian drag force term $-\epsilon \Delta \psi
\mathbf{E}$ using the transport potential equation, we derive a
PSPG-type pressure stabilization combined with grad-div stabilization
that avoids second-order derivative terms in the discrete formulation,
simplifying both implementation and theoretical analysis.
We establish existence and uniqueness of the weak solution using the
Banach fixed-point theorem and contraction principle, while discrete
well-posedness is proven via the Brouwer fixed-point theorem with
explicit tracking of parameter dependencies.
Optimal a priori error estimates of order $\mathcal{O}(h^k)$ are
derived in the energy norm with mesh-independent constants.

The formulation naturally handles general polygonal meshes including
non-convex elements and hanging nodes without special treatment,
enabling adaptive refinement and irregular geometries common in
nanopore sensor simulations.
Comprehensive numerical experiments demonstrate optimal convergence on
various mesh types (regular polygons, Voronoi tessellations, distorted
meshes, meshes with hanging nodes) and validate applicability to
electro-osmotic flows in nanopore sensors with T-shaped and curved
obstacles at different electric field strengths.

The equal-order formulation offers several practical advantages over
the mixed finite element approach of~\cite{alsohaim2025analysis}:
simpler implementation with uniform approximation spaces, natural
handling of geometric complexity through polygonal meshes, and
seamless treatment of hanging nodes for adaptive refinement.
These advantages include reduced degrees of freedom (up to 43\% fewer
than Taylor-Hood elements for $k=1$, and approximately 15\% for $k=2$,
depending on mesh topology), simplified implementation through uniform
polynomial basis handling, and high tolerance to mesh distortion.
A detailed comparison with the finite element method is presented in
Section~\ref{sec-comparison}.
%%
%% These advantages include reduced degrees of freedom (approximately
%% 43\% fewer for $k=1$ and 15\% fewer for $k=2$ than Taylor-Hood
%% elements), simplified implementation through uniform polynomial basis
%% handling, and high tolerance to mesh distortion.
%%
%% The detailed technical comparison with finite element methods,
%% leveraging the VEM notation and operators introduced in Section 3, is
%% presented in Section 4 after the formulation is fully described.
%%%

\subsection{Outline}
The remaining part of the present work is sketched as follows.
In Section \ref{sec-2}, we present the coupled
Stokes-Poisson-Boltzmann equations with the necessary data assumptions
and the well-posedness of the primal formulation.
In Section \ref{sec-3}, we describe the pressure-stabilization method,
the fundamentals of VEM, and the formulation of the stabilized virtual
element discretization.
In Section \ref{sec-4}, we demonstrate the well-posedness of the
stabilized VE problem and the error estimate in the energy norm.
In Section \ref{sec-5}, we show the practical behavior of the proposed
method and its properties.
In Section \ref{sec-6}, we briefly discuss the major conclusions of
the proposed work.

% Local Variables:
% mode: latex
% End:
%% Hey Emacs, this is -*-latex-*-

\section{Model problem }
\label{sec-2}
Let $\Omega$ denote an open, bounded domain in $\mathbb{R}^2$ with
Lipschitz boundary $\partial\Omega$, filled by an incompressible
electrolyte fluid under the influence of pressure gradients and
electric forces.
The governing equations involve the coupling of the Stokes and
nonlinear Poisson-Boltzmann equations.
The stationary case reads as:
\emph{find the velocity vector field $\uv$, the scalar pressure field
$\ps$, and the electrostatic double layer potential $\psi$
satisfying:}
\begin{subequations}
  \label{P}
  \begin{align}
    -\mu\Delta\uv + \nabla\ps                               &= \fv-\epsilon\Delta\psi\Ev \phantom{0\gs} \text{in}~\Omega, \label{stoke-1}\\[0.5em]
    \nabla\cdot\uv                                          &= 0 \phantom{\fv-\epsilon\Delta\psi\Ev\gs} \text{in}~\Omega, \label{stoke-2}\\[0.5em]
    -\epsilon\Delta\psi + \uv\cdot\nabla\psi + \kappa(\psi) &= \gs \phantom{\fv-\epsilon\Delta\psi\Ev0} \text{in}~\Omega, \label{heat-1}\\[0.5em]
    \uv = \mathbf{0} \quad \text{and} \quad \psi            &= 0 \phantom{\fv-\epsilon\Delta\psi\Ev\gs} \text{on}~\partial\Omega,\label{stoke-3}
  \end{align}
\end{subequations}
with the additional condition that
\begin{align}
  \int_\Omega\ps~\mathrm{d}\Omega = 0,
  \label{heat-2} 
\end{align}
%%
%% \begin{empheq}[left=(P) \empheqlbrace]{align} 
%%   \quad - \mu \Delta \mathbf{u} + \nabla p &= \fv\ - \epsilon \Delta \psi \mathbf{E} \quad &&\text{in} \quad \Omega, \label{stoke-1}\\
%%   \nabla \cdot \mathbf{u} &= 0   &&\text{in} \quad \Omega, \label{stoke-2}\\
%%   -\epsilon \Delta \psi + \mathbf{u} \cdot \nabla \psi + \kappa(\psi) &= g \quad &&\text{in} \quad \Omega, \label{heat-1}\\
%%   \mathbf{u} = \mathbf{0} \quad \text{and} \quad 	\psi& = 0 \quad &&\text{on} \quad \partial\Omega, \label{stoke-3}\\
%%   \int_\Omega p &= 0, \quad  && \label{heat-2} 
%% \end{empheq}
%%
where
$\mu>0$ is the fluid viscosity, %%
$\fv$ a vector-valued body force, %%
$\epsilon$ the electric permittivity of the electrolyte, %%
$\Ev$ an externally applied electric field (typically along the
longitudinal direction), %%
$\kappa(\psi)$ a non-linear term describing the charge of the
electrolyte, %%
and $g$ an external load (source/sink) of
potential.

Recalling \cite[Lemma 2.1]{holst2012adaptive}, we impose the following
assumptions:
%% on the data of problem $(P)$:

\medskip
\noindent
\textbf{(A0) Data assumptions:}
\begin{itemize}
\item[$(i)$] The potential field is uniformly bounded, i.e., there
  exist two real constants $\Lambda_\ast$ and $\Lambda^\ast$ such that
  $\Lambda_\ast\leq 0\leq\Lambda^\ast$ and
  \begin{align*}
    \Lambda_\ast\leq\psi(t)\leq\Lambda^\ast
  \end{align*}
  for all $\ts\in\REAL$.
  
\item[$(ii)$] The nonlinear density function $\kappa(\psi)$ satisfies
  $\kappa(0)=0$.
  Furthermore, there exist two positive constants $\kappa_\ast$ and
  $\kappa^\ast$ such that the following holds:
  \begin{align*}
    |\kappa(t_1)- \kappa(t_2)| &\leq \kappa^\ast |t_1 - t_2| \qquad \,\, \text{for~all} \,\, t_1, t_2 \in \big[\Lambda_\ast, \Lambda^\ast\big],\\
    |\kappa(t_1)- \kappa(t_2)| &\geq \kappa_\ast |t_1 - t_2| \qquad \,\, \text{for~all} \,\, t_1, t_2 \in \big[\Lambda_\ast, \Lambda^\ast\big].
  \end{align*}
  
\item[$(iii)$] The external electric field is uniformly bounded by
  $E^\ast$, and also satisfies $\Ev\in\big[\LS\infty(\Omega)\big]^2$.

\item[$(iv)$] The potential load satisfies $\gs\in\LTWO(\Omega)$.
\end{itemize}

According to $(ii)$, we assume that
$\kappa(\psi)=\alpha_0\sinh(\alpha_1\psi)$ where $\alpha_0$ and
$\alpha_1$ are known positive constants.
Here, $\alpha_0$ depends on the ionic valence, the elementary charge,
and the bulk ion concentration; $\alpha_{1}$ is a scaling factor
involving the Boltzmann constant and the reference absolute
temperature.

{
%%%
We now introduce some standard notation that will be used throughout this work. Let $C_P$ denote the Poincar\'{e} constant. For $k\ge0$ and $p>1$, let $W_p^k(\Omega)$ denote the Sobolev space of order $k$, equipped with the norm $\|\cdot\|_{k,p,\Omega}$ and semi-norm $|\cdot|_{k,p,\Omega}$. In the particular case $p=2$, it coincides with the standard Sobolev space $H^k(\Omega)$.
Furthermore, we introduce the compact embedding constant
$C_{1\hookrightarrow p}$, corresponding to the compact embedding
$H^1(\Omega) \hookrightarrow L^p(\Omega)$ for $p \geq 1$, satisfying
\begin{align*}
	\| \phi \|_{0,p,\Omega}
	\leq C_{1 \hookrightarrow p} \| \nabla \phi\|_\Omega
	\qquad \text{for all} \quad \phi \in H^1(\Omega).
\end{align*}}

%% \MGT{
%%
Hereafter, we introduce the functional spaces for the variational formulation.
For the velocity field, we define the vector-valued Sobolev space
\begin{align*}
  \Vv
  := \big[\HONEzr(\Omega)\big]^2
  =  \big\{\vv\in\big[\HONE(\Omega)\big]^2:\vv=\mathbf{0}\text{~on~}\partial\Omega\big\},
\end{align*}
equipped with the norm
$\NORM{\vv}{1,\Omega}:=\NORM{\nabla\vv}{\Omega}$.
For the pressure field, we define the space of square-integrable
functions with zero mean
\begin{align*}
  \Qs
  := \LTWOzr(\Omega)
  = \big\{\qs\in\LTWO(\Omega):\int_\Omega\qs\,d\Omega=0\big\},
\end{align*}
equipped with the norm $\NORM{\qs}{\Omega} := \NORM{\qs}{\LTWO(\Omega)}$.
For the electrostatic potential, we define
{
\begin{align*}
 \mathlarger\Phi_0
  &:= \HONEzr(\Omega)
  = \big\{\phi\in\HONE(\Omega):\phi=0\text{~on~}\partial\Omega\big\}, \\
  \mathlarger\Phi
  &:= \big\{\phi\in \HONEzr(\Omega):   \Lambda_\ast\leq\phi\leq\Lambda^\ast \big\},
\end{align*}
where $\mathlarger\Phi_0$ is used for the test function space and $\mathlarger\Phi$ for the solution space, both equipped with the norm $\NORM{\phi}{1,\Omega}:=\NORM{\nabla\phi}{\Omega}$.}
Additionally, we denote by $\Vv_{div}$ the subspace of divergence-free
velocity fields
\begin{align*}
  \Vv_{div} := \big\{ \vv \in \Vv : \nabla \cdot \vv = 0 \text{~in~} \Omega \big\}.
\end{align*}
We anticipate that we will use this last functional space in the
definition of the variational form (cf. Eq.~\eqref{variation2}).
%% }

\subsection{Weak formulation and well-posedness}
\label{cnts0}

Applying the integration by parts after multiplying
\eqref{stoke-1}--\eqref{heat-1} by suitable test functions, we obtain
the system
\begin{subequations}
  \begin{align}
    \int_{\Omega} \mu\nabla\uv:\nabla \vv \,d\Omega -
    \int_{\Omega} (\nabla\cdot\vv)\ps \,d\Omega +
    \int_{\Omega} (\uv\cdot\nabla\psi)\Ev\cdot\vv \,d\Omega &=
    \int_{\Omega} \fv\cdot\vv \,d\Omega +
    \int_{\Omega} \big(\gs -\kappa(\psi)\big)\Ev\cdot\vv \,d\Omega,\label{int1} \\[0.5em]
    %% --------------------------------------------------------------------------------
    \int_{\Omega} (\nabla\cdot\uv)\qs \,d\Omega &=0, \\[0.5em]
    %% --------------------------------------------------------------------------------
    \int_{\Omega} \epsilon\nabla\psi\cdot\nabla\phi \,d\Omega +
    \int_{\Omega} \uv\cdot(\nabla\psi)\phi \,d\Omega +
    \int_{\Omega} \kappa(\psi)\phi \,d\Omega &=
    \int_{\Omega} \gs\phi \,d\Omega, 
  \end{align}
\end{subequations}
for all $(\vv,\qs,\phi)\in\Vv\times\Qs\times\mathlarger\Phi_0$.
We remark that Eq.~\eqref{int1} is obtained by replacing the last drag
term (Laplacian term) in the momentum equation \eqref{stoke-1} by the
transport potential equation \eqref{heat-1}.

\medskip
\noindent
Given $\psi\in\mathlarger\Phi$ and $\uv\in\Vv$, we introduce the
bilinear forms
\begin{align*}
  \calA\big(\psi;(\vv,\qs),(\wv,\rs)\big) &:=
  \ass{V}(\vv,\wv) - \bs(\wv,\qs) + \bs(\vv,\rs) + \cs(\psi;\vv,\wv)
  \qquad
  && \hspace{-5mm}\text{for all} \,\, (\vv,\qs), (\wv,\rs)\in\Vv\times\Qs,\\[0.5em]
  %% --------------------------------------------------------------------------------
  \calB\big(\uv;\phi,\xi)&:= \ass{p}(\phi,\xi) + \css{p}(\uv;\phi,\xi) + \ds(\phi,\xi)
  \qquad\qquad\quad
  && \hspace{-5mm}\text{for all}\,\, \phi,\xi\in\mathlarger\Phi_0,
\end{align*}
{where}
\begin{align*}
  \ass{V}(\vv,\wv)      & := \int_{\Omega} \mu\nabla\vv:\nabla\wv          \,d\Omega, & \bs(\vv,\qs)       & := \int_{\Omega} (\nabla\cdot\vv)\qs             \,d\Omega, \\[0.5em]
  \cs(\psi;\vv,\wv)     & := \int_{\Omega} (\vv\cdot\nabla\psi)\Ev\cdot\wv \,d\Omega, & \ass{p}(\phi,\psi) & := \int_{\Omega} \epsilon\nabla\phi\cdot\nabla\xi\,d\Omega, \\[0.5em]
  \css{p}(\uv;\phi,\xi) & := \int_{\Omega} (\uv\cdot\nabla\phi)\xi         \,d\Omega, & \ds(\phi,\xi)      & := \int_{\Omega} \kappa(\phi)\xi                 \,d\Omega.	
\end{align*}

We note that for $\uv\in\Vv$ such that $\nabla\cdot\uv=0$, the bilinear
form $c_p(\uv; \cdot,\cdot)$ satisfies the following property
\begin{align}
  \css{p}(\uv;\phi,\xi) = - \css{p}(\uv;\xi,\phi)
  \qquad\,\,\text{for~all}\,\,\phi,\xi\in\mathlarger\Phi_0.
  \label{skew-p}
\end{align}
Thus, we define the following skew-symmetric trilinear form associated
with the convective term $\css{p}(\cdot;\cdot,\cdot)$ as follows:
\begin{align}
  \cs^{skew}_p(\uv;\phi,\xi) = \frac{1}{2}\Big(
  \css{p}(\uv;\phi,\xi) - \css{p}(\uv;\xi,\phi)
  \Big)
  \qquad\,\,\text{for~all} \,\, \phi,\xi\in\mathlarger\Phi_0. 
\end{align}
Finally, for given $\psi\in\mathlarger\Phi$, we define the
load term:
\begin{align*}
  \Fss{\psi}(\vv) :=
  \int_{\Omega} \fv\cdot\vv\,d\Omega +
  \int_{\Omega} \big(\gs - \kappa(\psi)\big) \Ev\cdot\vv\,d\Omega
  \qquad \text{for~all}\,\,\vv\in\Vv.
\end{align*}
In view of all the above, the primal variational formulation of
{Problem~\eqref{P}} can be stated as follows:
\begin{align}
  \begin{cases}
    \text{Find } (\uv,\ps,\psi)\in\Vv\times\Qs\times\mathlarger\Phi, \text{~such~that} \\[1ex]
    \begin{aligned}
      \calA\big(\psi;(\uv,\ps),(\vv,\qs)\big) &= \Fss{\psi}(\vv) && \quad \text{for~all~} (\vv,\qs)\in\Vv\times\Qs,\\[0.5em] 
        \calB\big(\uv;\psi,\phi\big)          &= (\gs,\phi)      && \quad \text{for~all~} \phi\in\mathlarger\Phi_0.
    \end{aligned}
  \end{cases}
  \label{variation}
\end{align}

It is obvious that the primal problem \eqref{variation} represents a
system of nonlinear equations.
Therefore, {Problem~\eqref{variation}} can be decoupled as follows:

\medskip
\noindent
$\bullet$ For any given $\psi\in\mathlarger\Phi$, find
$(\uv,\ps)\in\Vv\times\Qs$ such that
\begin{align}
  \calA\big(\psi;(\uv,\ps),(\vv,\qs)\big) = \Fss{\psi}(\vv) \qquad\text{for~all~} (\vv,\qs)\in\Vv\times\Qs.
  \label{variation1}
\end{align}

\medskip
\noindent
$\bullet$ For any given $\uv\in\Vv_{div}$, find
$\psi\in\mathlarger\Phi$ such that
\begin{align}
  \calB\big(\uv;\psi,\phi\big) &= (\gs,\phi) \qquad\text{for~all~} \phi\in\mathlarger\Phi_0.
  \label{variation2}
\end{align}

\subsection{Well-posedness of the decoupled problems}
In this section, we demonstrate the well-posedness of the decoupled
problems \eqref{variation1} and \eqref{variation2}.

\begin{theorem}
  \label{wellv}
  Let $\psi\in\mathlarger \Phi$ be such that
  $\NORM{\nabla\psi}{\Omega}\leq\frac{\mu}{2\Css{1 \hookrightarrow4}^2\Es^\ast}$.
  Then, the decoupled problem \eqref{variation1} has a unique solution
  $(\uv,\ps)\in\Vv\times\Qs$.
  Furthermore, it also satisfies
  \begin{align}
    \NORM{\nabla\uv}{\Omega} &\leq {\frac{2\Css{P}}{\mu}}     \Big( \NORM{\fv}{\Omega} + \Es^\ast \NORM{\gs}{\Omega} + \frac{\mu\kappa^\ast\Css{P}}{2\Css{1\hookrightarrow 4}^2}\Big), \label{velv}\\[0.5em]
    \|\ps\|_\Omega       &\leq {\frac{4\Css{P}}{\beta_0}} \Big( \NORM{\fv}{\Omega} + \Es^\ast \NORM{\gs}{\Omega} + \frac{\mu\kappa^\ast\Css{P}}{2\Css{1\hookrightarrow 4}^2}\Big). \label{velp}
  \end{align}
\end{theorem}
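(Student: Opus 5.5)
The plan is to treat \eqref{variation1}, for fixed $\psi$, as a linear saddle-point problem in $\Vv\times\Qs$ and apply the Babu\v{s}ka--Brezzi theory. The form $\bs(\cdot,\cdot)$ satisfies the continuous inf-sup condition on $\Vv\times\Qs$ with constant $\beta_0>0$, which is where $\beta_0$ in \eqref{velp} comes from. It then remains to show two things. First, $\ass{V}(\cdot,\cdot)+\cs(\psi;\cdot,\cdot)$ is continuous on $\Vv$ and coercive on $\Vv_{div}$; in fact I would show coercivity on all of $\Vv$. Second, $\Fss{\psi}$ is a bounded functional on $\Vv$.

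\emph{Coercivity and continuity.} By H\"older's inequality with exponents $(4,2,4)$ and assumption (A0)(iii),
\begin{align*}
  |\cs(\psi;\vv,\wv)| \leq \Es^\ast \NORM{\vv}{0,4,\Omega}\NORM{\nabla\psi}{\Omega}\NORM{\wv}{0,4,\Omega}
  \leq \Es^\ast\Css{1\hookrightarrow4}^2\NORM{\nabla\psi}{\Omega}\NORM{\nabla\vv}{\Omega}\NORM{\nabla\wv}{\Omega}.
\end{align*}
Under the hypothesis $\NORM{\nabla\psi}{\Omega}\leq \mu/(2\Css{1\hookrightarrow4}^2\Es^\ast)$ this gives $|\cs(\psi;\vv,\vv)|\leq \frac{\mu}{2}\NORM{\nabla\vv}{\Omega}^2$. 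Hence
\begin{align*}
  \ass{V}(\vv,\vv)+\cs(\psi;\vv,\vv)\geq \frac{\mu}{2}\NORM{\nabla\vv}{\Omega}^2 .
\end{align*}
The same bound shows the form is continuous with constant at most $3\mu/2$. Existence and uniqueness of $(\uv,\ps)$ then follow from the Brezzi theorem. Equivalently, by Lax--Milgram on $\Vv_{div}$ one first obtains $\uv$, and the pressure is then recovered through the inf-sup condition.

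\emph{Load bound.} I would estimate $\Fss{\psi}$ by Cauchy--Schwarz and Poincar\'e:
\begin{align*}
  |\Fss{\psi}(\vv)|\leq \Css{P}\big(\NORM{\fv}{\Omega}+\Es^\ast\NORM{\gs}{\Omega}+\Es^\ast\NORM{\kappa(\psi)}{\Omega}\big)\NORM{\nabla\vv}{\Omega}.
\end{align*}
By (A0)(ii) with $\kappa(0)=0$ and the Poincar\'e inequality, $\NORM{\kappa(\psi)}{\Omega}\leq\kappa^\ast\Css{P}\NORM{\nabla\psi}{\Omega}$. Applying the hypothesis on $\nabla\psi$ again, $\Es^\ast\NORM{\kappa(\psi)}{\Omega}\leq \mu\kappa^\ast\Css{P}/(2\Css{1\hookrightarrow4}^2)$; the factor $\Es^\ast$ cancels, producing exactly the third term in the bracket. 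Testing with $(\vv,\qs)=(\uv,\ps)$ cancels the $\bs$ terms, and coercivity yields \eqref{velv} with constant $2\Css{P}/\mu$.

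\emph{Pressure.} From the momentum equation and the inf-sup condition,
\begin{align*}
  \beta_0\NORM{\ps}{\Omega}\leq \sup_{\vv}\frac{\bs(\vv,\ps)}{\NORM{\nabla\vv}{\Omega}}\leq \|\Fss{\psi}\|_{\Vv'}+\Big(\mu+\frac{\mu}{2}\Big)\NORM{\nabla\uv}{\Omega}.
\end{align*}
Inserting \eqref{velv} gives a factor $(1+3)\Css{P}$, which is \eqref{velp}. The main thing to get right is this constant bookkeeping, in particular that the continuity constant $3\mu/2$ combined with $2\Css{P}/\mu$ produces the factor $4\Css{P}/\beta_0$; a cruder estimate would still give the result with a slightly larger constant. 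The only genuinely conceptual point is that the smallness of $\nabla\psi$ simultaneously controls the non-symmetric coupling term and the nonlinear load contribution.
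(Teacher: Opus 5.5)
Your proposal is correct and follows essentially the same route as the paper's proof. The steps match: coercivity constant $\mu/2$ from H\"older, the $H^1\hookrightarrow L^4$ embedding and the smallness of $\|\nabla\psi\|_\Omega$; the continuous inf-sup condition with $\beta_0$ plus saddle-point (Girault--Raviart/Brezzi) theory for existence and uniqueness; testing with $(\mathbf{u},p)$ for the velocity bound; and the momentum equation with the inf-sup condition and continuity constant $3\mu/2$, which produces $4C_P/\beta_0$. Your explicit derivation of the term $\mu\kappa^\ast C_P/(2C_{1\hookrightarrow 4}^2)$, via $\kappa(0)=0$, the Lipschitz bound on $[\Lambda_\ast,\Lambda^\ast]$, Poincar\'e and the hypothesis on $\nabla\psi$, fills in what the paper leaves implicit. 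Likewise, your use of a general test function $\mathbf{v}$ in the pressure step is the right reading of the paper's displayed identity, which is written with $\mathbf{u}$ in place of $\mathbf{v}$.
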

\begin{proof}
  First, we note that the bilinear form $\calA(\psi;\cdot,\cdot)$ is
  continuous.
  Employing the Sobolev embedding theorem and Assumption
  \textbf{(A0)}, for any $(\vv,\qs)\in\Vv\times\Qs$, it holds that
  \begin{align}
    \mathcal{A}\big(\psi; (\vv,\qs), (\vv,\qs) \big)
    & = \ass{V}(\vv,\vv) + \cs(\psi;\vv,\vv) \nonumber\\[0.5em]
    & \geq \mu \|\nabla \vv\|^2_\Omega - \Es^\ast \|\vv\|^2_{0,4,\Omega} \|\nabla\psi\|_\Omega \nonumber \\[0.5em]
    & \geq \big(\mu - \Es^\ast \Css{1\hookrightarrow 4}^2 \|\nabla\psi\|_\Omega\big) \|\nabla\vv\|^2_\Omega \nonumber \\[0.5em]
    & \geq \frac{\mu}{2} \|\nabla\vv\|^2_\Omega.
    \label{momc}	
  \end{align}
  Additionally, the bilinear form $\bs(\cdot,\cdot)$ satisfies the
  continuous inf-sup condition
  \begin{align}
    \sup_{\vv\in\Vv\backslash\{\mathbf{0}\}} \dfrac{\bs(\vv,\qs)}{\|\vv\|_{1,\Omega}}\geq\beta_0 \|\qs\|_\Omega,
    \label{con-binf}
  \end{align}
  where $\beta_0>0$ is the inf-sup constant.
  Combining \eqref{momc} and \eqref{con-binf}, the decoupled problem
  \eqref{variation1} has a unique solution, see~\cite{bookgirault}.
  Employing the Assumption \textbf{(A0)} and the Sobolev embedding
  theorem, we obtain bound \eqref{velv}.
  Additionally, rewriting the momentum equation \eqref{int1} for the
  pressure field, we infer that
  \begin{align*}
    \bs(\uv,\ps) = \Fss{\psi}(\uv) - \ass{V}(\uv,\uv) - {\css{}(\psi;\uv,\uv)}.
  \end{align*}
  Recalling {the inf-sup condition \eqref{con-binf}},
  Assumption \textbf{(A0)}, bound \eqref{velv}, and the Sobolev
  embedding theorem, yield
  \begin{align*}
    \beta_0\|\ps\|_\Omega
    \leq \Css{P} \big(
    \NORM{\fv}{\Omega}
    + \Es^\ast \NORM{\gs}{\Omega}
    + \frac{\Css{P}\mu\kappa^\ast }{2 C_{1 \hookrightarrow 4}^2}
    \big)
    + \frac{3\mu}{2} \|\nabla\uv\|_\Omega.
  \end{align*}
  Thus, we obtain inequality \eqref{velp} from \eqref{velv}.
  %% \qquad \qquad $\blacksquare$
\end{proof}

We now discuss the well-posedness of the transport potential equation
\eqref{variation2} by showing that the nonlinear form
$\calB(\uv;\cdot,\cdot)$ is strongly monotone and Lipschitz
continuous.
\begin{theorem}
  \label{wellp}
  For given $\uv\in\Vv$, the problem \eqref{variation2} has a unique
  solution, satisfying
  \begin{align}
    \|\nabla\psi\|_\Omega \leq\frac{\Css{P}}{\epsilon}\NORM{\gs}{\Omega}.
    \label{potential}
  \end{align}
\end{theorem}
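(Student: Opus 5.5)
We will prove Theorem~\ref{wellp} with the theory of strongly monotone, Lipschitz continuous operators (Zarantonello's theorem, the nonlinear analogue of Lax--Milgram). The argument has to account for the fact that the solution is sought in the closed convex set $\mathlarger\Phi\subset\mathlarger\Phi_0$. Throughout, $\uv\in\Vv_{div}$ is fixed, as in the decoupled problem \eqref{variation2}.

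\emph{Operator and strong monotonicity.} We define $\mathcal{T}_{\uv}:\mathlarger\Phi_0\to\mathlarger\Phi_0'$ by $\langle\mathcal{T}_{\uv}\psi,\phi\rangle:=\calB(\uv;\psi,\phi)$. For $\psi_1,\psi_2\in\mathlarger\Phi$, set $\delta=\psi_1-\psi_2\in\mathlarger\Phi_0$ and split the difference into three pieces:
\begin{align*}
  \calB(\uv;\psi_1,\delta)-\calB(\uv;\psi_2,\delta)
  = \epsilon\NORM{\nabla\delta}{\Omega}^2
  + \css{p}(\uv;\delta,\delta)
  + \int_\Omega\big(\kappa(\psi_1)-\kappa(\psi_2)\big)\delta\,d\Omega .
\end{align*}
\begin{itemize}
\item The convective term vanishes by the skew-symmetry \eqref{skew-p}.
\item Since $\kappa(t)=\alpha_0\sinh(\alpha_1 t)$ is increasing, Assumption \textbf{(A0)}$(ii)$ gives $(\kappa(t_1)-\kappa(t_2))(t_1-t_2)\ge\kappa_\ast|t_1-t_2|^2\ge0$ on $[\Lambda_\ast,\Lambda^\ast]$.
\end{itemize}
Hence the difference is at least $\epsilon\NORM{\nabla\delta}{\Omega}^2$, which is strong monotonicity with constant $\epsilon$.

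\emph{Lipschitz continuity.} We bound each term of $\calB(\uv;\psi_1,\phi)-\calB(\uv;\psi_2,\phi)$ separately.
\begin{itemize}
\item The diffusion term is bounded by $\epsilon\NORM{\nabla\delta}{\Omega}\NORM{\nabla\phi}{\Omega}$.
\item For the convective term, H\"older's inequality with exponents $(4,2,4)$ and the embedding $H^1\hookrightarrow L^4$ give the bound $\Css{1\hookrightarrow4}^2\NORM{\nabla\uv}{\Omega}\NORM{\nabla\delta}{\Omega}\NORM{\nabla\phi}{\Omega}$.
\item For the reaction term, the Lipschitz bound in \textbf{(A0)}$(ii)$ and Poincar\'e's inequality give $\kappa^\ast\Css{P}^2\NORM{\nabla\delta}{\Omega}\NORM{\nabla\phi}{\Omega}$.
\end{itemize}

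\emph{Existence and uniqueness.} This step is the main obstacle. The Lipschitz bound for $\kappa$ holds only on $[\Lambda_\ast,\Lambda^\ast]$, and the solution must lie in the convex set $\mathlarger\Phi$ rather than in a linear space. We plan to handle this by replacing $\kappa$ with its Lipschitz, monotone extension $\tilde\kappa$ that is constant outside $[\Lambda_\ast,\Lambda^\ast]$. Zarantonello's theorem then applies on the Hilbert space $\mathlarger\Phi_0$ and gives a unique solution of the truncated problem. We then invoke the a priori boundedness of \cite[Lemma 2.1]{holst2012adaptive}, encoded in \textbf{(A0)}$(i)$, to conclude that this solution takes values in $[\Lambda_\ast,\Lambda^\ast]$. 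It therefore lies in $\mathlarger\Phi$ and solves \eqref{variation2}. Uniqueness in $\mathlarger\Phi$ follows directly from strong monotonicity: if $\psi_1,\psi_2$ both solve \eqref{variation2}, testing with $\delta$ gives $\epsilon\NORM{\nabla\delta}{\Omega}^2\le0$.

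\emph{Stability estimate.} We test \eqref{variation2} with $\phi=\psi$. The convective term vanishes by \eqref{skew-p}. Since $\kappa(0)=0$ and $\kappa$ is monotone, $\int_\Omega\kappa(\psi)\psi\,d\Omega\ge\kappa_\ast\NORM{\psi}{\Omega}^2\ge0$. Combining these with Cauchy--Schwarz and Poincar\'e,
\[
\epsilon\NORM{\nabla\psi}{\Omega}^2\le\NORM{\gs}{\Omega}\NORM{\psi}{\Omega}\le\Css{P}\NORM{\gs}{\Omega}\NORM{\nabla\psi}{\Omega}.
\]
Dividing by $\NORM{\nabla\psi}{\Omega}$ (the case $\psi=0$ is trivial) yields \eqref{potential}.
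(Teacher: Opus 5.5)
Your argument follows the paper's route: strong monotonicity of $\calB(\uv;\cdot,\cdot)$ from the skew-symmetry \eqref{skew-p} and the monotonicity of $\kappa$, Lipschitz continuity term by term, and then the monotone-operator (Zarantonello) theorem. You are more careful than the paper in three places.
\begin{enumerate}
\item You fix $\uv\in\Vv_{div}$, and this is genuinely needed. For a non-solenoidal $\uv\in\Vv$ one has $\css{p}(\uv;\delta,\delta)=-\tfrac12\int_\Omega(\nabla\cdot\uv)\,\delta^2\,d\Omega$, which can be negative. The paper's statement says $\uv\in\Vv$, yet its proof silently uses skew-symmetry.
\item You observe that $\kappa=\alpha_0\sinh(\alpha_1\,\cdot)$ is Lipschitz only on $[\Lambda_\ast,\Lambda^\ast]$, and that $\mathlarger\Phi$ is a convex set rather than a linear space. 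The paper instead applies the theorem as though $\calB(\uv;\cdot,\cdot)$ were Lipschitz on all of $\mathlarger\Phi_0$. Your truncation $\tilde\kappa$ is the right repair.
\item You actually derive \eqref{potential}, which the paper states but never proves, and your derivation is correct.
\end{enumerate}

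The step that still needs work is showing that the truncated solution $\tilde\psi$ lies in $\mathlarger\Phi$. Citing \cite[Lemma 2.1]{holst2012adaptive} or \textbf{(A0)}$(i)$ is circular here. Both concern solutions of the untruncated equation, and Holst et al.\ treat it without convection. Your $\tilde\psi$ is only known to solve the equation with $\tilde\kappa$, and it becomes a solution of \eqref{variation2} only once you know it is bounded.

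Prove the bound directly on the truncated problem. For $k\ge 0$, test with $(\tilde\psi-k)^+\in\mathlarger\Phi_0$.
\begin{itemize}
\item The convective term vanishes because $\nabla\cdot\uv=0$.
\item Where $\tilde\psi>k$ you have $\tilde\kappa(\tilde\psi)\ge 0$, because $\tilde\kappa$ is nondecreasing with $\tilde\kappa(0)=0$.
\end{itemize}
Hence $\epsilon\NORM{\nabla(\tilde\psi-k)^+}{\Omega}^2\le\int_\Omega \gs\,(\tilde\psi-k)^+\,d\Omega$. Stampacchia's iteration, which is available in two dimensions for $\gs\in L^2(\Omega)$, then gives $\tilde\psi\le M$. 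Here $M$ depends only on $\Omega$, $\epsilon$ and $\NORM{\gs}{\Omega}$, not on the truncation levels. Symmetrically, $\tilde\psi\ge -M$.

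If $\gs\in L^\infty(\Omega)$, a shortcut is available: testing with $(\tilde\psi-\Lambda^\ast)^+$ and $(\Lambda_\ast-\tilde\psi)^+$ closes at once, provided $\kappa(\Lambda^\ast)\ge\operatorname{ess\,sup}\gs$ and $\kappa(\Lambda_\ast)\le\operatorname{ess\,inf}\gs$.

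Either way, \textbf{(A0)}$(i)$ must be read as requiring the interval to be large relative to the data, e.g.\ $\Lambda_\ast\le -M$ and $\Lambda^\ast\ge M$. With that made explicit, $\tilde\kappa(\tilde\psi)=\kappa(\tilde\psi)$, so $\tilde\psi\in\mathlarger\Phi$ solves \eqref{variation2} and your proof is complete.
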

\begin{proof}
  Recalling the Lipschitz continuity of $\kappa(\cdot)$ along with the
  bilinear/trilinear forms $\ass{p}(\cdot,\cdot)$ and
  $\cs^{skew}_p(\cdot;\cdot,\cdot)$, we see that 
  $\calB(\uv;\cdot,\cdot)$ is Lipschitz
  continuous.
  Additionally, employing the skew-symmetric property of
  $\cs^{skew}_p$ and Assumption \textbf{(A0)}, for any
  $\phi,\xi\in\mathlarger\Phi_0$, we find that
  \begin{align}
    \calB(\uv;\phi,\phi-\xi) - \calB(\uv;\xi, \phi-\xi)
    \geq \epsilon \|\nabla (\phi-\xi)\|^2_\Omega + \kappa_\ast \|\phi-\xi\|^2_\Omega
    \geq \epsilon \|\nabla (\phi-\xi)\|^2_\Omega,
    \label{cort} 
  \end{align}
  which shows that the bilinear form $\calB(\uv;\cdot,\cdot)$ is
  strongly monotone.
  Thus, {Problem~\eqref{variation2}} has a unique solution.
  %% \qquad\qquad $\blacksquare$
\end{proof}

\subsection{Well-posedness of the coupled problem}
We now establish the well-posedness of the primal problem
\eqref{variation} using the Banach fixed-point theorem and the
contraction principle.
To achieve this goal, we reformulate Problem~\eqref{variation} adopting the fixed-point strategy.
Following  {Theorem~\ref{wellv}}, we introduce a well-defined operator
$\calS_{flow}:\mathlarger\Phi\rightarrow\Vv\times\Qs$, defined by
\begin{align}
  \calS_{flow}(\psi)
  := \big(\calS_{1,flow}(\psi),\calS_{2,flow}(\psi)\big)
  =: (\uv,\ps) \qquad \text{~for~all} \,\, \psi\in\mathlarger\Phi,
\end{align}
where $(\uv,\ps)\in\Vv\times\Qs$ is the unique solution of Problem~\eqref{variation1} with given $\psi\in\mathlarger\Phi$.
Recalling {Theorem~\ref{wellp}}, we can introduce a well-defined operator
{$\calN_{elec}:\Vv_{div}\rightarrow\mathlarger\Phi$}, defined by
\begin{align}
  \calN_{elec}(\uv) := \psi\qquad\text{~for~all} \,\,\uv\in\Vv_{div},
\end{align}
where $\psi\in\mathlarger\Phi$ is the unique solution of
\eqref{variation2} with given $\uv\in\Vv_{div}$.
Finally, we define an operator
$\mathcal{T}:\mathlarger\Phi\rightarrow\mathlarger\Phi$ such that
\begin{align}
  \calT(\psi) = \calN_{elec}(\calS_{1,flow}(\psi)) \qquad
  \text{~for~all} \,\,\psi\in\mathlarger\Phi.
\end{align}
Thus, the fixed-point problem is stated as follows: \emph{Find
$\psi\in\mathlarger\Phi$, such that $\calT(\psi)=\psi$.}
We further emphasize that solving {Problem~\eqref{variation}} is
equivalent to find at least one fixed-point of $\calT$.
In this sequel, we define the following closed set
\begin{align}
  \widehat{\mathlarger\Phi} :=
  \Big\{ \psi\in\mathlarger\Phi \,\, \text{such~that~} \,\,\|\nabla\psi\|_\Omega
  \leq \frac{\mu}{2\Es^\ast\Css{1 \hookrightarrow 4}^2}
  \Big\}.
  \label{set1}
\end{align}
Hereafter, we first demonstrate that the operator $\calT$ maps
$\widehat{\mathlarger\Phi}$ to itself.
\begin{lemma}
  \label{selfc}
  Assume that the potential load $\gs\in\LTWO(\Omega)$ and the data of {Problem~\eqref{P}} satisfy the condition
  \begin{align}
    \frac{2\Css{P}\Es^\ast\Css{1 \hookrightarrow 4}^2}{\mu\epsilon} \NORM{\gs}{\Omega} <1.
  \end{align}
  Then,
  $\calT\big(\widehat{\mathlarger\Phi}\big)\subset\widehat{\mathlarger\Phi}$.
\end{lemma}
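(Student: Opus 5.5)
Let $\psi\in\widehat{\mathlarger\Phi}$. The plan is to follow the composition $\calT=\calN_{elec}\circ\calS_{1,flow}$ and check, one step at a time, that each operator is well defined on its input. At the end we verify the gradient bound that defines $\widehat{\mathlarger\Phi}$ in \eqref{set1}.

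\emph{Flow step.} By definition of $\widehat{\mathlarger\Phi}$, the potential $\psi$ satisfies $\NORM{\nabla\psi}{\Omega}\leq\frac{\mu}{2\Css{1\hookrightarrow4}^2\Es^\ast}$. This is exactly the hypothesis of Theorem~\ref{wellv}. Hence \eqref{variation1} has a unique solution $(\uv,\ps)=\calS_{flow}(\psi)\in\Vv\times\Qs$. Testing \eqref{variation1} with $(\mathbf{0},\qs)$ gives $\bs(\uv,\qs)=0$ for all $\qs\in\Qs$. The divergence of $\uv$ has zero mean because $\uv=\mathbf{0}$ on $\partial\Omega$, so we may take $\qs=\nabla\cdot\uv\in\Qs$. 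This yields $\nabla\cdot\uv=0$, i.e. $\uv=\calS_{1,flow}(\psi)\in\Vv_{div}$, which is the domain of $\calN_{elec}$.

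\emph{Electrostatic step.} Theorem~\ref{wellp}, applied with this $\uv\in\Vv_{div}\subset\Vv$, gives a unique $\widetilde\psi=\calN_{elec}(\uv)=\calT(\psi)\in\mathlarger\Phi$. In particular $\widetilde\psi$ satisfies the box constraint $\Lambda_\ast\leq\widetilde\psi\leq\Lambda^\ast$, by definition of the solution space. By \eqref{potential},
\begin{align*}
  \NORM{\nabla\widetilde\psi}{\Omega}\leq\frac{\Css{P}}{\epsilon}\NORM{\gs}{\Omega}.
\end{align*}
It then remains to show $\frac{\Css{P}}{\epsilon}\NORM{\gs}{\Omega}\leq\frac{\mu}{2\Es^\ast\Css{1\hookrightarrow4}^2}$. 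Multiplying both sides by $\frac{2\Es^\ast\Css{1\hookrightarrow4}^2}{\mu}$ turns this into exactly the small-data hypothesis of the lemma, which is even strict. Hence $\widetilde\psi\in\widehat{\mathlarger\Phi}$.

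If I recall the derivation of \eqref{potential} correctly, one tests with $\phi=\psi$. The convective term vanishes by \eqref{skew-p}, since $\nabla\cdot\uv=0$. The term $\int_\Omega\kappa(\psi)\psi\geq0$ is nonnegative by monotonicity of $\kappa$ together with $\kappa(0)=0$. Cauchy--Schwarz and Poincar\'e then bound the right-hand side by $\Css{P}\NORM{\gs}{\Omega}\NORM{\nabla\psi}{\Omega}$.

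The only delicate point I expect is this use of skew-symmetry. It requires $\uv$ to be divergence free, which is why the flow step must first establish $\uv\in\Vv_{div}$. The resulting estimate is independent of $\uv$, so the argument closes without any bound on the velocity. Everything else is a direct chaining of Theorems~\ref{wellv} and~\ref{wellp} with the assumed smallness condition.
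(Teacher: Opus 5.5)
Your proposal is correct and follows the paper's route: apply the stability bound \eqref{potential} of Theorem~\ref{wellp} to $\mathcal{T}(\psi)=\mathcal{N}_{elec}(\mathcal{S}_{1,flow}(\psi))$, then use the smallness hypothesis to compare with the radius defining $\widehat{\Phi}$. Your extra checks are sound details the paper leaves implicit: that $\psi\in\widehat{\Phi}$ meets the hypothesis of Theorem~\ref{wellv}, and that $\mathcal{S}_{1,flow}(\psi)\in\mathbf{V}_{div}$ (by testing with $(\mathbf{0},\nabla\cdot\mathbf{u})$), so that $\mathcal{N}_{elec}$ is applicable.
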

\begin{proof}
  Let $\psi\in\widehat{\mathlarger\Phi}$ such that
  $\calS_{flow}=(\uv,\ps)$.
  Recalling {Theorem~\ref{wellp}}, we infer
  \begin{align*}
    |\calT(\psi)|_{1,\Omega}
    =    |\calN_{elec}(\uv)|_{1,\Omega}
    \leq \frac{\Css{P}}{\epsilon} \NORM{\gs}{\Omega} \,
    {<} \, \frac{\mu}{2\Es^\ast \Css{1\hookrightarrow4}^2},
  \end{align*} 
  which completes the proof.
  %% \qquad \qquad $\blacksquare$
\end{proof}

\begin{theorem} \label{cnts}
  Let $\widehat{\mathlarger\Phi}$ be defined by \eqref{set1}.
  Then, for any $\psi,\widehat{\psi}\in\widehat{\mathlarger\Phi}$, the
  operator $\calT$ satisfies the following
  \begin{align}
    |\calT(\psi) -\calT(\widehat{\psi})|_{1,\Omega}
    \leq
    \frac{1}{\epsilon}\big[
      \kappa^\ast \Css{P}^2 +
      \Css{1\hookrightarrow4}^2 | \calS_{1,flow}(\widehat{\psi})|_{1,\Omega}
      \big]\,
    \|\nabla(\psi-\widehat{\psi})\|_\Omega.
    \label{maincnts}
  \end{align}
\end{theorem}
\begin{proof}
  Let $\psi,\widehat{\psi}\in\widehat{\mathlarger\Phi}$ be chosen such
  that
  \begin{align*}
    \calS_{flow}(\psi)= (\calS_{1,flow}(\psi),\calS_{2,flow}(\psi))=:(\uv,p)
  \end{align*}
  and
  \begin{align*}
    \calS_{flow}(\widehat{\psi})
    := (\calS_{1,flow}(\widehat{\psi}),\calS_{2,flow}(\widehat{\psi}))
    =:(\widehat{\uv},\widehat{p}).
  \end{align*}
  Recalling the definition of the bilinear form
  $\calA(\psi;\cdot,\cdot)$, {the inequality~\eqref{momc}} and the Sobolev
  embedding theorem, we have
  %5
  \begin{align*}
    \frac{\mu}{2} |\calS_{1,flow}(\psi) - \calS_{1,flow}(\widehat{\psi})|^2_{1,\Omega}
    &=    \frac{\mu}{2} \| \nabla(\uv - \widehat{\uv})\|^2_\Omega\\
    &\leq \calA\big(\psi; (\uv-\widehat{\uv},\ps-\widehat{\ps}), (\uv-\widehat{\uv},\ps-\widehat{\ps})\big) \\
    &=    \Fss{\psi}(\uv-\widehat{\uv})-\Fss{\widehat{\psi}}(\uv-\widehat{\uv})+\calA\big(\psi-\widehat{\psi}; (\widehat{\uv},\widehat{\ps}), (\uv-\widehat{\uv},\ps-\widehat{\ps})\big) \\
    &=    \Fss{\psi}(\uv-\widehat{\uv})-\Fss{\widehat{\psi}}(\uv-\widehat{\uv})+\cs      (\psi- \widehat{\psi};\widehat{\uv},\uv-\widehat{\uv}) \\
    &\leq \Es^\ast \kappa^\ast \|\psi-\widehat{\psi}\|_\Omega \|\uv -  \widehat{\uv}\|_\Omega + E^\ast \|\widehat{\uv}\|_{0,4,\Omega} \|\nabla(\psi-\widehat{\psi})\|_\Omega \|\uv -  \widehat{\uv}\|_{0,4,\Omega} \\
    &\leq \Es^\ast \big(\kappa^\ast \Css{P}^2 + \Css{1\hookrightarrow4}^2 \|\nabla\widehat{\uv}\|_\Omega\big) \|\nabla(\psi-\widehat{\psi})\|_\Omega \|\nabla(\uv-\widehat{\uv})\|_\Omega.
  \end{align*}
  {Invoking the definition of $\calS_{1,flow}(\psi)$ and $\calS_{2,flow}(\widehat{\psi})$, we can easily obtain }
  \begin{align}
    |\calS_{1,flow}(\psi) - \calS_{1,flow}(\widehat{\psi})|_{1,\Omega}
    \leq \frac{2\Es^\ast}{\mu} \big(
    \kappa^\ast \Css{P}^2 + \Css{1\hookrightarrow4}^2 |\calS_{1,flow}(\widehat{\psi})|_{1,\Omega}
    \big)\,
    \|\nabla(\psi-\widehat{\psi})\|_\Omega.
    \label{s1}
  \end{align}

  We now assume that $\uv,\widehat{\uv}\in\Vv_{div}$ is such that
  $\calN_{elec}(\uv)=\psi$ and
  $\calN_{elec}(\widehat{\uv})=\widehat{\psi}$.
  Applying \eqref{cort} and the Sobolev embedding Theorem yield
  \begin{align*}
    \epsilon |\calN_{elec}(\uv)-\calN_{elec}(\widehat{\uv})|^2_{1,\Omega}
    &=    \epsilon \|\nabla(\psi-\widehat{\psi})\|^2_\Omega\\
    &\leq \calB(\uv; \psi, \psi-\widehat{\psi}) - \calB(\uv; \widehat{\psi}, \psi-\widehat{\psi})\\
    &=    \calB(\widehat{\uv}; \widehat{\psi}, \psi-\widehat{\psi}) - \calB(\uv; \widehat{\psi}, \psi-\widehat{\psi})\\
    &=    \cs_p^{skew}(\widehat{\uv}-\uv; \widehat{\psi}, \psi-\widehat{\psi}) \\
    &\leq \Css{1\hookrightarrow4}^2 \|\nabla \widehat{\psi}\|_\Omega \|\nabla(\uv-\widehat{\uv})\|_\Omega \|\nabla(\psi-\widehat{\psi})\|_\Omega.
  \end{align*} 
  Thus, we arrive at
  \begin{align}
    |\calN_{elec}(\uv)- \calN_{elec}(\widehat{\uv})|_{1,\Omega} 
    &\leq \frac{\Css{1\hookrightarrow4}^2} {\epsilon} |\calN_{elec}(\widehat{\uv})|_{1,\Omega} \|\nabla(\uv-\widehat{\uv})\|_\Omega.
    \label{s2}
  \end{align}

  Finally, for any $\psi,\widehat{\psi}\in\widehat{\mathlarger\Phi}$,
  we use the definition of the operator $\calT$, \eqref{s1}, and
  \eqref{s2}:
  \begin{align*}
    |\calT(\psi)-\calT(\widehat{\psi})|_{1,\Omega}
    &=    |\calN_{elec}(\calS_{1,flow}(\psi))-\calN_{elec}(\calS_{1,flow}(\widehat{\psi}))|_{1,\Omega}\\
    &\leq \frac{\Css{1\hookrightarrow4}^2} {\epsilon} |\calT(\widehat{\psi})|_{1,\Omega}  |\calS_{1,flow}(\psi) - \calS_{1,flow}(\widehat{\psi})|_{1,\Omega} \\
    &\leq \frac{2\Es^\ast \Css{1\hookrightarrow4}^2} {\epsilon \mu} |\calT(\widehat{\psi})|_{1,\Omega}  \big[\kappa^\ast \Css{P}^2
      + \Css{1\hookrightarrow4}^2 | \calS_{1,flow}(\widehat{\psi})|_{1,\Omega} \big] \|\nabla(\psi-\widehat{\psi})\|_\Omega. 
  \end{align*}
  The result \eqref{maincnts} readily follows from the above
  analysis.
\end{proof}

We remark that the operator $\calT$ is a continuous map on
$\widehat{\mathlarger \Phi}$ and
$\calT(\widehat{\mathlarger\Phi})\subset\widehat{\mathlarger\Phi}$
(see Lemma \ref{selfc} and Theorem \ref{cnts}).
Consequently, the operator $\calT$ has at least one fixed point using
the Banach fixed-point theorem.
Furthermore, the uniqueness of the fixed-point of $\calT$ can be
achieved using the contraction principle by introducing the necessary
condition on the data of the problem~\ref{P}.
Concerning this, we present the following Proposition.
\begin{proposition}
  \label{uni1}
  Under the assumption of Lemma \ref{selfc}, the operator
  $\calT$ has at least one fixed-point.
  Hence, the coupled problem has at least one solution
  $(\uv,p,\psi)\in\Vv\times\Qs\times\mathlarger\Phi$ with
  $\psi\in\widehat{\mathlarger\Phi}$, satisfying
  \begin{align}
    \|\nabla\uv\|_\Omega + \|\ps\|_\Omega
    \leq \max\Big\{\frac{2\Css{P}}{\mu}, \frac{4\Css{P}}{\beta_0} \Big\}
    \Big( \NORM{\fv}{\Omega} + \Es^\ast \NORM{\gs}{\Omega} + \frac{\mu \kappa^\ast \Css{P}}{2 \Css{1\hookrightarrow 4}^2}\Big),
    \qquad
    \|\nabla \psi\|_\Omega \leq \frac{\Css{P}}{\epsilon} \NORM{\gs}{\Omega}.
    \label{velv2} 	
  \end{align}
  Moreover, if the data of {Problem~\eqref{P}} satisfies
  \begin{align}
    \frac{1}{\epsilon}\bigg(
    \kappa^\ast \Css{P}^2 + \frac{2\Css{P}\Css{1\hookrightarrow4}^2}{\mu}
    \Big( \NORM{\fv}{\Omega} + \Es^\ast \NORM{\gs}{\Omega} + \frac{\mu \kappa^\ast \Css{P}}{2 \Css{1\hookrightarrow 4}^2}\Big)
    \bigg) < 1. 
  \end{align}
  Then, the operator $\calT$ has a unique fixed-point.
  Equivalently, the primal problem \eqref{variation} has a unique
  solution.
\end{proposition}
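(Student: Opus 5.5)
The plan is to treat existence and uniqueness separately, both through the fixed-point map $\calT=\calN_{elec}\circ\calS_{1,flow}$ restricted to the closed set $\widehat{\mathlarger\Phi}$ defined in \eqref{set1}.

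\textbf{Existence.} By Lemma~\ref{selfc}, the smallness assumption on $\NORM{\gs}{\Omega}$ gives $\calT(\widehat{\mathlarger\Phi})\subset\widehat{\mathlarger\Phi}$. By Theorem~\ref{cnts}, $\calT$ is Lipschitz continuous on $\widehat{\mathlarger\Phi}$ with respect to the $H^1$-seminorm. Since the constant in \eqref{maincnts} is not yet known to be smaller than one, a contraction argument is not available here. I would instead use a compactness (Schauder/Brouwer-type) argument.

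\begin{itemize}
\item The set $\widehat{\mathlarger\Phi}$ is convex, closed and bounded in $\HONEzr(\Omega)$. It is convex because the pointwise box constraint $\Lambda_\ast\le\phi\le\Lambda^\ast$ and the seminorm ball are both convex, and it is closed because pointwise bounds survive a.e.\ convergence along subsequences.
\item For compactness of $\calT$, I would note that a bounded sequence $\psi_n$ in $\widehat{\mathlarger\Phi}$ has a subsequence converging weakly in $\HONE$ and strongly in $\LS4$. The $\kappa$-term in $\Fss{\psi}$ is then strongly continuous by Lipschitz continuity. The term $\cs(\psi;\cdot,\cdot)$ passes to the limit after integration by parts against smooth test functions.
\item From this I would obtain a convergent subsequence of $\calS_{1,flow}(\psi_n)$, and then, through the Lipschitz estimate \eqref{s2}, of $\calT(\psi_n)$.
\end{itemize}

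I expect this compactness step to be the main obstacle. The weakest point is obtaining strong convergence of the velocities from mere weak convergence of the potentials. If necessary, I would fall back on a finite-dimensional (Galerkin) approximation combined with Brouwer's theorem, followed by a limit passage.

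Once a fixed point $\psi$ exists, the associated pair $(\uv,\ps)=\calS_{flow}(\psi)$ solves \eqref{variation}. The stated bounds then follow directly:
\begin{itemize}
\item $\psi\in\widehat{\mathlarger\Phi}$ satisfies the hypothesis of Theorem~\ref{wellv}, so \eqref{velv} and \eqref{velp} hold. Adding them and bounding by the maximum of the two prefactors gives the first estimate in \eqref{velv2}.
\item Applying Theorem~\ref{wellp} to $\psi=\calN_{elec}(\uv)$ gives \eqref{potential}, which is the second estimate in \eqref{velv2}.
\end{itemize}

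\textbf{Uniqueness.} I would return to \eqref{maincnts} and bound $|\calS_{1,flow}(\widehat\psi)|_{1,\Omega}$ uniformly over $\widehat{\mathlarger\Phi}$ using \eqref{velv}, namely by $\frac{2\Css{P}}{\mu}\big(\NORM{\fv}{\Omega}+\Es^\ast\NORM{\gs}{\Omega}+\frac{\mu\kappa^\ast\Css{P}}{2\Css{1\hookrightarrow4}^2}\big)$. Inserting this bound shows that the Lipschitz constant of $\calT$ on $\widehat{\mathlarger\Phi}$ is exactly the left-hand side of the stated data condition. That condition therefore makes $\calT$ a contraction on the closed subset $\widehat{\mathlarger\Phi}$ of the Banach space $\HONEzr(\Omega)$, and Banach's theorem yields a unique fixed point.

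Uniqueness of the full triple $(\uv,\ps,\psi)$ then follows for two reasons. First, any solution of \eqref{variation} with $\psi\in\widehat{\mathlarger\Phi}$ is a fixed point of $\calT$. Second, $\calS_{flow}$ is single-valued by Theorem~\ref{wellv}, so the fixed point determines $(\uv,\ps)$.
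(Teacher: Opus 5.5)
\textbf{Where you agree with the paper.} Your uniqueness argument is the paper's: insert \eqref{velv} into \eqref{maincnts} and apply the contraction principle on $\widehat{\mathlarger\Phi}$.

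\textbf{Where you depart, and why that is right.} On existence the paper only notes that $\calT$ is continuous and maps $\widehat{\mathlarger\Phi}$ into itself, and then cites the Banach fixed-point theorem. That theorem gives nothing without a contraction constant below one. A Schauder-type argument is therefore what is needed, as you say.

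\textbf{The gap: compactness.} The step you defer is the whole content of the existence proof, and the mechanism you sketch does not deliver it. Since $\Ev$ is only in $[\LS{\infty}(\Omega)]^2$, you cannot integrate by parts in $\cs(\psi;\cdot,\cdot)$. Testing against fixed functions only identifies a weak limit of $\calS_{1,flow}(\psi_n)$, whereas \eqref{s2} needs strong $\HONE$ convergence. The missing idea is an energy estimate on the difference.

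Given $\psi_n\in\widehat{\mathlarger\Phi}$, extract a subsequence with $\psi_n\rightharpoonup\psi$ weakly in $\HONEzr(\Omega)$. Then $\psi\in\widehat{\mathlarger\Phi}$, because the set is convex and closed, hence weakly closed, and $\psi_n\to\psi$ in $\LS{4}(\Omega)$ by Rellich. Put $\uv_n=\calS_{1,flow}(\psi_n)$ and $\uv=\calS_{1,flow}(\psi)$. Subtract the two momentum equations and test with $\uv_n-\uv\in\Vv_{div}$, which removes the pressures. Using \eqref{momc} for $\psi_n$ gives
\begin{align*}
  \frac{\mu}{2}\NORM{\nabla(\uv_n-\uv)}{\Omega}^2
  \leq \Es^\ast\kappa^\ast\NORM{\psi_n-\psi}{\Omega}\,\NORM{\uv_n-\uv}{\Omega}
  + \Big|\int_\Omega \nabla(\psi_n-\psi)\cdot\uv\,\big(\Ev\cdot(\uv_n-\uv)\big)\,d\Omega\Big|.
\end{align*}
The first term tends to zero. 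For the second, $\uv_n-\uv$ is bounded in $\HONE$ by \eqref{velv}, so along a further subsequence it converges strongly in $\LS{4}(\Omega)$. Hence $\uv\,(\Ev\cdot(\uv_n-\uv))$ converges strongly in $[\LTWO(\Omega)]^2$, while $\nabla(\psi_n-\psi)\rightharpoonup\mathbf{0}$ weakly, so this weak--strong pairing vanishes. Thus $\uv_n\to\uv$ in $\HONE$, and \eqref{s2} yields $\calT(\psi_n)\to\calT(\psi)$. So $\calT(\widehat{\mathlarger\Phi})$ is relatively compact and Schauder's theorem applies; no Galerkin detour is needed.

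\textbf{Two smaller corrections.} First, adding \eqref{velv} and \eqref{velp} bounds $\NORM{\nabla\uv}{\Omega}+\NORM{\ps}{\Omega}$ by the \emph{sum} $\frac{2\Css{P}}{\mu}+\frac{4\Css{P}}{\beta_0}$ times the data term. That is up to twice the maximum, so ``bounding by the maximum'' does not give \eqref{velv2} as written; only the version with an extra factor $2$ follows.

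Second, for uniqueness of the solution of \eqref{variation} you should not restrict to solutions with $\psi\in\widehat{\mathlarger\Phi}$. For any solution, $\uv\in\Vv_{div}$, so Theorem~\ref{wellp} and the hypothesis of Lemma~\ref{selfc} give $\NORM{\nabla\psi}{\Omega}\leq\frac{\Css{P}}{\epsilon}\NORM{\gs}{\Omega}<\frac{\mu}{2\Es^\ast\Css{1\hookrightarrow4}^2}$. Hence $\psi\in\widehat{\mathlarger\Phi}$ automatically, and $(\uv,\ps)=\calS_{flow}(\psi)$ by Theorem~\ref{wellv}, so every solution corresponds to the unique fixed point.
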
   

% Local Variables:
% mode: latex
% End:
%% Hey Emacs, this is -*-latex-*-

\section{Stabilized virtual element framework}
\label{sec-3}

In this section, we present the
%% Hereafter, we develop the next step of the proposed method, i.e., the
virtual element formulation of the problem \eqref{variation} using the
equal-order approximation incorporating the residual-based pressure
stabilization techniques.

For the completeness' sake, we first present the fundamentals
of VEM, such as the mesh regularity assumptions, projectors, and the
local/global virtual element spaces that approximate the trial and
test spaces associated with the velocity vector field, pressure field,
and potential field introduced in Section \ref{cnts0}.

\medskip
\noindent
\textbf{(A1)} \textbf{Mesh assumption.}
\label{mesh_reg}

Let $\P$ denote a generic polygonal element with boundary
$\partial\P$, area $\mP$, and diameter $\hP$.
The barycenter and edges of polygon $\P$ are denoted by
$\xvP\in\REAL^2$ and $\E$, respectively.
Let $\norP$ be the outward unit normal to its boundary $\partial\P$.
We consider a sequence of decompositions of $\Omega$ into
non-overlapping general polygonal elements $\P$, denoted by
$\{\Th\}_{h>0}$ with maximum diameter $\hh:=\max_{\hP\in\Th}\hP$.
In addition, for any element $\P\in\Th$, we assume that there exists a
constant $\delta_0 >0$, independent of $\hh$, such that the following
holds:
\begin{itemize}
\item each $\P$ is star-shaped with respect to a ball $\calB_{\P}$ of
  radius $\geq\delta_0\hP$;
\item any edge $\E\in\partial\P$ has a finite length
  $\mE\geq\delta_0\mP$.
\end{itemize}

%% Section 3.1
\subsection{Pressure stabilizations}
We briefly review the formulation of the residual-based pressure
stabilization technique for the coupled model problem~\eqref{P}.
Hereafter, we introduce the following local trilinear form
$\calAP_{PSPG}(\cdot; \cdot,\cdot)$, defined for sufficiently smooth
$(\uv,\ps)\in\Vv\times\Qs$ and for all test function pair %% \MGT{test function pair}
$(\vv,\qs)\times\Vv\times\Qs$,
\begin{align}  
  \calAP_{PSPG}\big(\psi;(\uv,\ps), (\vv,\qs)\big)
  &
  := \asPV(\uv,\vv) - \bsP(\vv,\ps)
  + \csP(\psi;\uv,\vv)
  + \bsP(\uv,\qs)
  \nonumber\\[0.5em]
  & \qquad
  + \calLP{1}(\ps,\qs)
  + \calLP{2}(\psi;\uv,\qs),
\end{align}
where the bilinear/trilinear forms $\asPV(\cdot,\cdot)$,
$\bsP(\cdot,\cdot)$, and $\csP(\cdot,\cdot)$ are the local counterpart
of the continuous forms $\as(\cdot,\cdot)$, $\bs(\cdot,\cdot)$ and
$\cs(\cdot;\cdot,\cdot)$ on {$\P\in\Th$}.
Furthermore, the local bilinear forms $\calLP{1}(\cdot,\cdot)$ and
{$\calLP{2}(\psi;\cdot,\cdot)$ with $\psi\in\mathlarger \Phi$}, are defined as follows:
{
\begin{align*}
  \calLP{1}(\ps,\qs)
  :&= \int_{\P} \tau_{\P} \nabla\ps\cdot\nabla\qs\dE,\\
  %% --------------------------------------------
  \calLP2(\psi;\uv,\qs)
  :&= \int_{\P} \tau_{\P} \big( -\mu\Delta\uv
  + \uv\cdot\nabla\psi\Ev \big)\cdot\nabla\qs \dE.
\end{align*}}
{We remark that the stabilization parameter $\tau_E \sim h^2_E$ for all $E \in \Omega_h$, see~\cite{mishra2025equal,mishra2024unified,vem028}.}
Furthermore, the corresponding local load term
$F^{\psi,\P}_{PSPG}(\cdot,\cdot)$ is defined by
\begin{align*}
  F^{\psi,\P}_{\PSPG{}}(\vv,\qs)
  := \int_{\P} \fv\cdot\vv \dE
  + \int_{\P} \tau_{\P}\fv\cdot\nabla\qs\,\dE
  + \int_{\P} \big(\gs-\kappa(\psi)\big)\Ev\cdot\vv\,\dE
  + \int_{\P} \tau_{\P}\big(\gs-\kappa(\psi)\big)\Ev\cdot\nabla\qs\,\dE.
\end{align*}
Now, the global form of the trilinear form
$\calAP_{PSPG}\big(\cdot; \cdot,\cdot\big)$ and load term
$F_{PSPG}^{\psi,\P}(\cdot, \cdot)$ are given as follows:
\begin{align*}
  \calAPSPG\big(\psi;(\uv,\ps),(\vv,\qs)\big)
  &:= \sum_{\P\in\Th} \calAP_{\PSPG{}}\big(\psi;(\uv,\ps),(\vv,\qs)\big),\\
  \Fss{\PSPG{}}^{\psi}(\vv,\qs)
  &:= \sum_{\P\in\Th}\Fss{\PSPG{}}^{\psi,\P}(\vv,\qs).	
\end{align*}
Thus, the stabilized problem read as:
\begin{align}
  \begin{cases}
    \text{Find } (\uv,\ps,\psi)\in\Vv\times\Qs\times\mathlarger\Phi
    \text{~such that} \\[1ex]
    \begin{aligned}
      \calA_{\PSPG{}}\big(\psi;(\uv,\ps),(\vv,\qs)\big)
      &= \Fs^\psi_{\PSPG{}}(\vv,\qs) 
      && \quad\text{for~all~} (\vv,\qs)\in\Vv\times\Qs,\\[0.5em]
      \calB( \uv;\psi,\phi)
      &= ( \gs, \phi) 
      && \quad \text{for~all~} \phi\in\mathlarger\Phi_0.
    \end{aligned}
  \end{cases}
  \label{svariation}
\end{align}

\begin{remark}
  We remark that the stabilization of the momentum equation is
  considered in this work.
  The stabilization of the transport potential equation is not
  necessary for the diffusion-dominated regimes.
  We emphasize that the residual-based pressure stabilization
  technique is obtained by replacing the Laplacian term in the
  right-hand side of the momentum equation with the transport
  potential equation to avoid the higher-order derivative of the
  potential field.
  It provides a more convenient form of the stabilized problem and
  simplifies the implementation methodology as well as the theoretical
  analysis.
\end{remark}

%% Section 3.2
\subsection{Projectors and virtual element spaces}
\label{subsec-32:projectors}
{Let $\P$ be a polygon} in $\REAL^2$.
For $k\in\INTG\cup\{0\}$, we introduce the following
definitions/notations:
\begin{itemize}
\item The set of monomials $\calMk(\P)$ of degree $\leq\ks$:
  \begin{equation}
    \calMk(\P):=
    \Big\{ \mss{\beta} \,|\, \mss{\beta}
    = \Big(\dfrac{\xv-\xvP}{\hP}\Big)^{\boldsymbol{\beta}}
    \,\, \text{with}      \,\,     \boldsymbol{\beta}\in\INTG^2
    \,\, \text{such that} \,\,\abs{\boldsymbol{\beta}}\leq\ks
    \Big \},
  \end{equation}
  where $\boldsymbol{\beta}=(\beta_1, \beta_2)$, such that
  $\xv^{\boldsymbol{\beta}}:= \xss{1}^{\beta_1}\xss{2}^{\beta_2}$ with
  $|\boldsymbol{\beta}|:= \beta_1 + \beta_2$.
\item $\calMk^\ast(\P)$: Restriction of the monomials
  $\calMk(\P)$ of degree equal to $k$, defined by
  \begin{equation}
    \calMk^\ast(\P)
    := \Big\{
    \mss{\beta}\, |\, \mss{\beta}
    =  \Big( \dfrac{\xv-\xvP}{\hP} \Big)^{\boldsymbol{\beta}},\,\,
    \boldsymbol{\beta}\in\INTG^2\,\,
    \text{such~that}\,\, \abs{\boldsymbol{\beta}} = \ks
    \Big \}.
  \end{equation}
\item $\PS{k}(\P)$: Polynomial space of degree $\leq\ks$ on $\P$, and
  $\PS{-1}=\{ 0 \}$.
\item The broken Sobolev and polynomial spaces:
  \begin{align*}
    \WS{s}{p}(\Th) :&= \Big\{ \vs\in\LTWO(\Omega) \,\, \text{such that}\,\, \restrict{\vs}{\P} \in \WS{s}{p}(\P) \,\, \text{for all }\,\, \P\in\Th \Big\} \qquad \ss\in\REAL^{+}, \\
    \PS{k}   (\Th) :&= \Big\{ \ps\in\LTWO(\Omega) \,\, \text{such that}\,\, \restrict{\ps}{\P} \in \PS{k}   (\P) \,\, \text{for all }\,\, \P\in\Th \Big\}.
  \end{align*}
  %% \MGT{
  We recall that $\WS{s}{p}(\P)$ denotes the Sobolev-Slobodeckij space
  of functions on $\P$ with smoothness index $s\in\REAL^+$ and
  integrability exponent $p\in[1,\infty]$.% which coincides with the
  %standard Sobolev space $\HS{k}(\P)$ when $s = k$ is a non-negative
  %integer and $p = 2$.
  %% }
\end{itemize}
Let us assume $\P\in\Th$, the following polynomial projection
operators are introduced below:
\begin{itemize}
\item The $\Hv^1$\textbf{-energy projection} operator
  $\PinP{k}:\HONE(\P)\rightarrow\PS{k}(\P)$ is defined as follows:
  \begin{align}
    \begin{cases}
      \displaystyle
      \int_{\P} \nabla (\phi - \PinP{k} \phi)\cdot\nabla\mss{k} \, \dE = 0
      \qquad\,
      \text{for~all}\,\,\phi\in\HONE(\P)\,\,
      \text{and}\,\,\mss{k}\in\PS{k}(\P),\\[1em]
      \displaystyle
      \int_{\partial \P} (\phi - \PinP{k}\phi) \,\ds=0.
    \end{cases}
    \label{H1proj}
  \end{align}
  Furthermore, its extension to vector-valued function is defined by
  $\PvnP{k}:\big[\HONE(\P)\big]^2\rightarrow\big[\PS{k}(\P)\big]^2$
  such that
  \begin{align}
    \begin{cases}
      \displaystyle
      \int_{\P} \nabla(\phiv - \PinP{k} \phiv):\nabla \mv_k\,\dE = 0
      \qquad\,
      \text{for~all}\,\,\phiv\in\big[\HONE(\P)\big]^2\,\,
      \text{and}     \,\,\mv_k\in\big[ \PS{k}(\P) \big]^2,\\[1em]
      \displaystyle
      \int_{\partial\P} (\phiv - \PinP{k} \phiv)\,\ds = \mathbf{0}.
    \end{cases}
    \label{H1proj2}
  \end{align}
\item The $\mathbf{\LTWO}$\textbf{-projection} operator $\PizP{k}:\LTWO(\P)\rightarrow\PS{k}(\P)$ is given by
  \begin{align}
    \int_{\P} (\phi - \PizP{k} \phi) m_k \, \dE = 0
    \qquad\,
    \text{for~all}\,\, \phi\in\LTWO(\P)\,\,
    \text{and}   \,\, \mss{k}\in\PS{k}(\P).
    \label{l2proj}
  \end{align}
  Furthermore, its extension to vector-valued function is defined by
  $\PvzP{k}:\big[\LTWO(\P)\big]^2\rightarrow\big[\PS{k}(\P)\big]^2$
  such that
  \begin{align}
    \int_{\P} (\phiv - \PizP{k}\phiv) \cdot \mv_k \, \dE = 0
    \qquad \,
    \text{for all}\,\,\phiv\in\big[\LTWO(\P)\big]^2\,\,
    \text{and}    \,\,\mv_k\in\big[\PS{k}(\P)\big]^2.
    \label{l2proj2}
  \end{align}
\end{itemize} 
\begin{remark} \label{qstab}
  \textbf{Stability of} {$\PizP{k}$} (\cite{mishra2025equal}): The
  $\LTWO$-projection operator satisfies the following stability
  property w.r.t. $\LS{q}$-norm
  \begin{align}
    \NORM{\PizP{k}\phi}{\LS{q}(\P)} \leq
    \Cs\NORM{\phi}{\LS{q}(\P)}
    \qquad \,\,
    \text{for~all~}\,\, \phi\in\HONE(\Omega)\,\,
    \text{and}\,\, \qs\geq 2.
  \end{align}
\end{remark}
We are now in a position to introduce the $\HONE$-conforming virtual
element space $\Vshk(\P)$ of order $k\in\INTG$.
Following \cite{vem22,vem25}, we introduce
%%
%% \begin{align}
%%   V_h(\P):= \Big\{
%%   \phi \in H^1(\P) &\cap C^0( \partial{E}) \,\, \text{such that}, \nonumber \\ &(i) \,\,\Delta \phi \in \mathbb P_k(\P), \nonumber \\ &(ii)\, \,  \phi_{|e} \in \mathbb P_k(e) \,\, \forall \,\, e \in \partial\P,  \nonumber \\  &(iii) \,\, \big(\phi-\Pi_k^{\nabla,E}\phi, m_\alpha \big)_{E}=0, \, \forall \, m_\alpha \in \mathcal{M}^\ast_{k-1}(\P) \cup \mathcal{M}^\ast_{k}(\P) \Big\}. \nonumber
%% \end{align}
%%
%% \MGT{
\begin{align}
  \Vshk(\P):= \Big\{
  \phi \in \HONE(\P)
  \,\,\text{such that}\,\,
  &(i)  \,\,\restrict{\phi}{\partial\P}\in\CS{0}(\partial\P),\nonumber\\
  &(ii) \,\,\Delta\phi\in\PS{k}(\P),\nonumber\\
  &(iii)\,\,\restrict{\phi}{\E}\in\PS{k}(\E)\,\,\forall\E\in\partial\P,\nonumber \\
  &(iv) \,\,\big(\phi-\PinP{k}\phi,\mss{\alpha} \big)_{E}=0,
  \,\forall \mss{\alpha}\in\mathcal{M}^\ast_{k-1}(\P)\cup\calMk^\ast(\P)
  \Big\}.
  \nonumber
\end{align}
%% }

Additionally, the local virtual element space is equipped with the
following {degrees of freedom (\textbf{DoF})}:
$\DOFS{DF1}(\phi)$,
$\DOFS{DF2}(\phi)$, and
$\DOFS{DF3}(\phi)$:
\begin{itemize}
\item $\DOFS{DF1}(\phi):$ the values of $\phi$ evaluated at each
  vertices of $\P$;
\item $\DOFS{DF2}(\phi):$ the values of $\phi$ evaluated at $(k-1)$
  internal Gauss-Lobatto quadrature nodes on each edge
  $\E\in\partial\P$;
\item $\DOFS{DF3}(\phi):$ the internal moments of $\phi$ up to
  order $k-2$,
  \begin{align*}
    \frac{1}{\mP} \, \int_{\P} \phi\mss{\alpha}\,\dE
    \qquad\,\text{for~all}\,\,\mss{\alpha}\in\calM_{k-2}(\P).
  \end{align*}	
\end{itemize}
Finally, combining the local virtual space {$V^h_k(\P)$} for all $E \in
\Th$, the global virtual element space {$V^h_k$} is presented as
follows:
\begin{align*}
  \Vshk :=
  \Big\{
  \phi\in\HONE(\Omega)\,\,
  \text{such~that}\,\,\restrict{\phi}{\P}\in\Vshk(\P)\,\,
  \text{for~all}\,\,\P\in\Th
  \Big\}. 
\end{align*}
After this, we define the virtual element approximations of the
functional spaces introduced for the velocity vector field and
pressure field, as follows:
\begin{align*}
  \Vvh{} &:= \Big\{ \phiv\in\Vv \,\, \text{such that}\,\, \restrict{\phiv}{\P} \in\big[\Vshk(\P)\big]^2 \,\, \text{for all}\,\, \P\in\Th \Big\}, \\
  \Qsh   &:= \Big\{ \qs  \in\Qs \,\, \text{such that}\,\, \restrict{\qs  }{\P} \in\Vshk(\P)             \,\, \text{for all}\,\, \P\in\Th \Big\}. 
\end{align*}
Additionally, the virtual element approximation of the test and trial
spaces for the potential field is defined as follows:
\begin{align*}
  \mathlarger \Phi_{0,\hh} &:= \Big\{ \phi \in \mathlarger\Phi_0 \,\, \text{such that}\,\, \restrict{\phi}{\P} \in \Vshk(\P) \,\, \text{for all}\,\, \P\in\Th \Big\}, \\
  \mathlarger \Phi_{\hh}  &:= \Big\{ \phi \in \mathlarger\Phi   \,\, \text{such that}\,\, \restrict{\phi}{\P} \in \Vshk(\P) \,\, \text{for all}\,\, \P\in\Th \Big\}. 
\end{align*} 

%% Section 3.3
\subsection{Discrete forms and stabilized virtual element problem}
\label{subsec-33:discrete:forms}
In this section, we introduce the virtual element approximation of the
stabilized problem \eqref{svariation}, which is computable using {the
\DOFS{DoF}{}}.
We begin by introducing the discrete counterparts of the
bilinear/trilinear forms and load terms.

\medskip\noindent
$\bullet$ For any $\phi_h\in\mathlarger\Phi_h$, $\vvh$,
$\wvh\in\Vvh{}$, and $\qsh\in\Qsh$, we introduce the following local
discrete counterparts associated to the first equation of
\eqref{svariation}:
\begin{align*}
  \asPVh(\vvh, \wvh) &:= \int_{\P} \mu\PvzP{k-1}\nabla\vvh:\PvzP{k-1}\nabla\wvh\,\dE + \mu \SP_{1}\big( \vvh - \PvnP{k}\vvh, \wvh - \PvnP{k}\wvh \big), \\  
  \bsPh (\vvh, \qsh) &:= \int_{\P} \big(\PizP{k-1}\nabla\cdot\vvh\big)\,\PizP{k}\qsh\,\dE, \\
  {\csPh(\phi_h; \vvh,\wvh)}    &:= \int_{\P} \big( \PvzP{k}\vvh\cdot\PvzP{k-1}\nabla\phi_h \big) \Ev\cdot\PvzP{k}\wvh\,\dE,\\
  \calLP{1,h}(\psh,\qsh)       &:= { \int_{\P} \tau_{\P} \PvzP{k-1}\nabla\psh\cdot\PvzP{k-1}\nabla\qsh\,\dE + \tau_{\P} \SP_2\big( \psh - \PinP{k-1}\psh, \qsh - \PinP{k-1}\qsh\big)}, \\
  \calLP{2,h}(\phi_h,\uvh,\qsh)&:= \int_{\P} \tau_{\P} \big( -\nabla\cdot\PvzP{k-1}\nabla\uvh + ( \PvzP{k}\uvh\cdot\PvzP{k-1}\nabla\phi_h)\Ev \big)\cdot \PvzP{k-1}\nabla\qsh\,\dE,    
  %	\calLP{3,h}(\vv_h, \wvh) :&=  \int_{\P} \delta_E {\Pi}^{0,E}_{k-1} \nabla \cdot \vv_h \cdot \Pi^{0,E}_{k-1} \nabla \cdot \wvh\, \dE + \delta_E S^E_1 \big(\vv_h-\boldsymbol{\Pi}^{\nabla,E}_k \mathbf{v}_h, \wvh-\boldsymbol{\Pi}^{\nabla,E}_k \wvh \big).
\end{align*}
where
$\SP_1(\cdot,\cdot):\Vvh{}(\P)\times\Vvh{}(\P)\rightarrow\REAL$ and
$\SP_2(\cdot,\cdot):\Qsh(\P)\times\Qsh(\P)\rightarrow\REAL$ are the
VEM computable symmetric positive definite bilinear forms such that
they satisfy
\begin{align}
  \lambda_{1\ast} \ \NORM{\nabla\wvh}{0,\P}^2
  &
  \leq \SP_1(\wvh,\wvh)
  \leq \lambda_1^\ast \NORM{\nabla\wvh}{0,\P}^2
  \qquad
  \text{for~all} \,\,\wvh\in\Vvh{}(\P)\cap\text{ker}\big(\PvnP{k}\big),
  \label{vem-a} \\
  \lambda_{2\ast} \NORM{\nabla\qsh}{0,\P}^2
  &
  \leq \SP_2(\qsh,\qsh)
  \leq \lambda_{2}^\ast \NORM{\nabla\qsh}{0,\P}^2
  \qquad
  \text{for~all} \,\,\qsh\in\Qsh(\P)\cap\text{ker}\big(\PinP{k-1}\big),
  \label{vem-b}
\end{align}
with the positive constants
$\lambda_{1\ast}\leq\lambda_1^\ast$ and
$\lambda_{2\ast}\leq\lambda_2^\ast$
are independent of the mesh size.
Additionally, we introduce the grad-div stabilization terms to address
the violation of divergence-free constraints, as follows
\begin{align*}
  \calLP{3,h}(\vvh,\wvh) &:=
  \int_{\P} \delta_E \left(\PizP{k-1}\nabla\cdot\vvh\right)\,\left(\PizP{k-1}\nabla\cdot\wvh\right)\,\dE
  + \delta_E \SP_1 \big( \vvh - \PvnP{k}\vvh, \wvh - \PvnP{k}\wvh \big),
\end{align*}
{where $\delta_E>0$ denotes the stabilization parameter depending on the mesh size. Throughout this work, following~\cite{mishra2025equal}, we set $\delta_E \sim h_E$ for all $E \in \Omega_h$.}

\medskip
In view of above, the local VEM stabilized bilinear form
$\calAP_{\PSPG{},\hh}(\phi_h;\cdot,\cdot)$ is defined by
\begin{align}
  \calAP_{\PSPG{},\hh}\big(\phi_h;(\uvh,\psh), (\vvh,\qsh) \big)
  &:= \asPVh(\uvh,\vvh) - \bsPh(\vvh,\psh) + \csPh(\phi_h;\uvh,\vvh) + \bsPh(\uvh,\qsh) \nonumber \\
  & \qquad \qquad + \calLP{1,\hh}(\psh,\qsh) + \calLP{2,\hh}(\phi_h;\uvh,\qsh) + \calLP{3,\hh}(\uvh,\vvh),
\end{align}
for all $\phi_h\in\mathlarger \Phi_h(\P)$, and $(\uvh,\psh)$,
$(\vvh,\qsh)\in\Vvh{}(\P)\times\Qsh(\P)$.
Consequently, the global trilinear from
$\calA_{\PSPG{},\hh}(\phi_h;\cdot,\cdot)$ can be introduced by
combining the local contributions for all $\P\in\Th$, as follows
\begin{align*}
  \calA_{\PSPG{},\hh}\big( \phi_h;(\uvh,\psh), (\vvh,\qsh) \big)
  &:= \sum_{\P\in\Th} \calAP_{\PSPG{},\hh}\big( \phi_h;(\uvh,\psh), (\vvh,\qsh) \big),
\end{align*}
for all $\phi_h\in\mathlarger\Phi_h$, and $(\uvh,\psh)$,
{$(\vvh,\qsh)\in \Vvh{} \times \Qsh$}.
We now define the local virtual element approximation of the load term
$\FsP_{\PSPG{}}(\cdot)$.
For given $\phi_h\in\mathlarger\Phi_h(\P)$, the local approximation
$\Fs^{\phi_h,\P}_{\PSPG{},\hh}(\cdot)$ is defined by
\begin{align*}
  \Fs^{\phi_h,\P}_{\PSPG{},\hh}(\vvh,\qsh)
  &:=
  \int_{\P} \PvzP{k}\fv\cdot\PvzP{k}\vvh\,\dE +
  \int_{\P} \tau_{\P} \PvzP{k}\fv\cdot\PvzP{k-1}\nabla\qsh\,\dE +
  \int_{\P} \big( \gs - \kappa(\PizP{k}\phi_h)\big)\Ev\cdot\PvzP{k}\vvh\,\dE \nonumber \\
  & \qquad \qquad +
  \int_{\P} \tau_{\P} \big( \gs-\kappa(\PizP{k}\phi_h)\big)\Ev \cdot \PvzP{k-1}\nabla\qsh\,\dE.
\end{align*}

Therefore, the global discrete load term is given by
\begin{align*}
  \Fs^{\phi_h}_{\PSPG{},\hh}(\vvh,\qsh)
  := \sum_{\P\in\Th } \Fs^{\phi_h,\P}_{\PSPG{},\hh}(\vvh,\qsh)
  \qquad \text{for~all}\,\,
  (\vvh,\qsh)\in\Vvh{}\times\Qsh.
\end{align*}

\medskip\noindent
$\bullet$ The local discrete bilinear/trilinear form corresponding to
the potential transport equation is given as follows:
for any $\vvh\in\Vvh{}(\P)$ and
{$\phi_h, \xi_h\in\mathlarger\Phi_{0,\hh}(\P)$}
\begin{align*}
  \asPph(\phi_h,\xi_h)
  &:= \int_{\P}
  \epsilon \PvzP{k-1}\nabla\phi_h\cdot\PvzP{k-1}\nabla\xi_h\,\dE +
  \epsilon \SP_3 \big( \phi_h - \PinP{k}\phi_h, \xi_h - \PinP{k} \xi_h \big),\\[1em]
  \csPph(\vvh; \phi_h,\xi_h)
  &:= \int_{\P}
  \big( \PvzP{k}\vvh\cdot\PvzP{k-1}\nabla\phi_h \big)\,\PizP{k}\xi_h\,\dE,\\[1em]
  \cs^{skew,\P}_{\ps,\hh}(\vvh; \phi_h,\xi_h)
  &:= \dfrac{1}{2} \big( \csPph(\vvh;\phi_h,\xi_h) - \csPph(\vvh;\xi_h\phi_h) \big),\\[1em]
  \dsPh(\phi_h,\xi_h) &:= \int_{\P} \kappa(\PizP{k}\phi_h)\PizP{k}\xi_h\,\dE,
\end{align*}
where
$\SP_3(\cdot,\cdot):\mathlarger\Phi_{0,\hh}(\P)\times\mathlarger\Phi_{0,h}(\P)\rightarrow\REAL$
is a symmetric positive definite bilinear form that satisfies
\begin{align}
  \lambda_{3\ast} \NORM{\nabla\phi_h}{0,\P}^2
  &
  \leq \SP_3(\phi_h,\phi_h)
  \leq \lambda_3^\ast \NORM{\nabla\phi_h}{0,\P}^2\qquad
  \text{for~all}\,\,\phi_h\in\mathlarger\Phi_{0,\hh}(\P)\cap \text{ker}(\PvnP{k}),
  \label{vem-c} 
\end{align}
with the positive constants $\lambda_{3\ast} \leq \lambda_3^\ast$ does
not rely on the mesh size.

\medskip
The discrete global form corresponding to
$\calB(\cdot;\cdot,\cdot)$ is given by
\begin{align*}
  \calBh\big(\vvh;\phi_h,\xi_h\big)
  := \sum_{\P\in\Th}\calBPh\big(\vvh;\phi_h,\xi_h\big),
\end{align*}
where
$\calBPh\big(\vvh;\phi_h,\xi_h\big):=\asPph(\phi_h,\xi_h)+\cs^{skew,\P}_{\ps,\hh}(\vvh;\phi_h,\xi_h)+\dsPh(\phi_h,\xi_h)$.
Additionally, the discrete potential load is given by
\begin{align*}
  (\gsh,\xi_h) := \sum_{\P\in\Th} \int_{\P} \gs\PizP{k}\xi_h\,\dE
  \qquad \text{for~all}\,\,\xi\in\mathlarger\Phi_{0,\hh}.
\end{align*}

\medskip\noindent
$\bullet$ Combining all the above, the stabilized virtual element
problem is stated as follows:
%% \begin{align}
%%   \begin{cases}
%%     \text{Find~} (\uvh,\psh,\psi_h)\in\Vvh{}\times\Qsh\times\mathlarger\Phi_h \text{~such~that} \\[1ex]
%%     \begin{aligned}
%%       \calA_{\PSPG{},\hh\big(\psi_h;(\uvh,\psh),(\vvh,\qsh)\big) &= \Fs^{\psi_h}_{\PSPG{},\hh}(\vvh,\qsh) && \quad \text{for all } \,\, (\vvh,\qsh)\in\Vvh{}\times\Qsh, \\ 
%%       \calBh( \uvh; \psi_h, \phi_h) &= (\gsh,\phi_h) && \quad \text{for~all~} \phi_h\in\mathlarger\Phi_{0,\hh}.
%%     \end{aligned}
%%   \end{cases}
%%   \label{svem}
%% \end{align}

\begin{equation}
\begin{cases}
\text{Find } (\uvh,\psh,\psi_h)\in\Vvh{}\times\Qsh\times\mathlarger\Phi_h
\text{ such that} \\[1ex]
\begin{aligned}
\calA_{\PSPG{},\hh}\big( \psi_h;(\uvh,\psh),(\vvh,\qsh) \big)
&= \Fs^{\psi_h}_{\PSPG{},\hh}(\vvh,\qsh)
&& \text{for all } (\vvh,\qsh)\in\Vvh{}\times\Qsh, \\ 
\calBh( \uvh; \psi_h, \phi_h)
&= (\gsh,\phi_h)
&& \text{for all } \phi_h\in\mathlarger\Phi_{0,\hh}.
\end{aligned}
\end{cases}
\label{svem}
\end{equation}

Obviously, the discrete problem \eqref{svem} can be decoupled as
follows:
\begin{itemize}
\item With given $\psi_h\in\mathlarger\Phi_h$, find
  $(\uvh,\psh)\in\Vvh{}\times\Qsh$, such that it satisfies
  \begin{align}
    {\calA_{\PSPG{},\hh} \big(\psi_h;(\uvh,\psh),(\vvh,\qsh) \big)}
    &= \Fs^{\psi_h}_{\PSPG{},\hh}(\vvh,\qsh) 
    \qquad \text{for~all~} \,\,(\vvh,\qsh)\in\Vvh{}\times\Qsh,
    \label{dsvem1}
  \end{align}
\item With given $\uvh\in\Vvh{}$,
  find $\psi_h\in\mathlarger\Phi_h$, such that
  \begin{align}
    \calBh(\uvh;\psi_h,\phi_h) &= (\gsh,\phi_h) 
    \qquad \text{for~all~} \phi_h\in\mathlarger\Phi_{0,\hh}.
    \label{dsvem2}
  \end{align} 
\end{itemize}

% Local Variables:
% mode: latex
% End:

%% Hey Emacs, this is -*-latex-*-

%%%%%%%%%%%%% stability%%%%%%%%%%%%%%%%%%%%%%%%%%%%%%%%%%%%%%%%%%%%%%%%%%%%%%%%%

\section{Theoretical analysis}
\label{sec-4}

We begin the theoretical analysis by first establishing the
well-posedness of the discrete decoupled and coupled problems,
together with a priori error estimates in the energy norm.
%%
%To this end, we need to introduce some fundamental properties of
%Sobolev spaces.
%%
%Let $C_P$ be the Poincar\'{e} constant.
%%
%Additionally, we introduce the constant $C_q>0$, depending on
%$\Omega$, such that
%%
%\begin{align*}
 % \| \phi \|_{1,\Omega}
 % \leq C_q \|\nabla \phi\|_\Omega
 % \qquad \text{for all}\quad\phi \in W^1_2(\Omega),
%\end{align*}
%%
%where $W^k_p({\Omega})$ denotes the Sobolev space, which reduces to
%$H^k({\Omega})$ when $p=2$.
%%
%Furthermore, we introduce the compact embedding constant
%$C_{1\hookrightarrow p}$, corresponding to the compact embedding
%$H^1(\Omega) \hookrightarrow L^p(\Omega)$ for $p \geq 1$, satisfying
%%
%\begin{align*}
% \| \phi \|_{0,p,\Omega}
%  \leq C_{1 \hookrightarrow p} \| \nabla \phi\|_\Omega
%  \qquad \text{for all} \quad \phi \in H^1(\Omega).
% \end{align*}
%% Hereafter, the next step of the proposed method is establishing the
%% well-posedness of the discrete decoupled and coupled problems and a
%% priori error estimates in the energy norm. We begin by introducing the
%% fundamental inequalities and approximation estimates which are valid
%% on the mesh regularity assumption \textbf{(A1)}.
{In this sequel, we first consider the fundamental inequalities} and approximation
estimates of Lemmas~\ref{lemmaproj1}, \ref{lemmaproj2}, \ref{inverse},
and~\ref{estimate}, which hold under the mesh regularity assumption
\textbf{(A1)}.

%\subsection{Preliminary results} 
\begin{lemma}[Polynomial approximation \cite{scott}]
  \label{lemmaproj1}
  Under the assumption \textbf{(A1)}, for any functions
  $\phi\in\HS{s}(\P)$ with $\P\in\Th$, the elliptic projector
  $\PinP{k}$ and the $\LTWO$-orthogonal projector $\PizP{k}$
  satisfy the following error estimates
  \begin{align}
    \NORM{ \phi - \PinP{k} \phi }{l,\P} &\leq \Cs \hh^{s-l}_{\P} \snorm{\phi}{s,\P} \quad s, l \in \INTG, \,\,\, s \geq 1,\,\, \, l \leq s \leq k+1,\label{eqproj1}	 \\
    \NORM{ \phi - \PizP{k} \phi }{l,\P} &\leq \Cs \hh^{s-l}_{\P} \snorm{\phi}{s,\P} \quad s, l \in \INTG, \,\,\, l \leq s \leq k+1.
  \end{align}
\end{lemma}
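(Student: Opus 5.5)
The plan is to treat the two projectors separately and to rely on the standard Bramble--Hilbert / Dupont--Scott argument on star-shaped domains, as in \cite{scott}. Assumption \textbf{(A1)} makes each $\P$ star-shaped with respect to a ball of radius $\geq\delta_0\hP$, so the chunkiness parameter is uniformly bounded. This gives an averaged Taylor polynomial $Q^k\phi\in\PS{k}(\P)$ satisfying
\begin{align*}
  \snorm{\phi-Q^k\phi}{l,\P}\leq \Cs\,\hP^{s-l}\snorm{\phi}{s,\P},\qquad 0\leq l\leq s\leq k+1,
\end{align*}
with $\Cs$ depending only on $k$, $s$ and $\delta_0$. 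I would derive the constant on a reference configuration obtained by scaling $\P$ to unit diameter, then scale back. Under \textbf{(A1)} the scaled polygons form a family of uniformly chunky domains, so the Bramble--Hilbert constant is uniform in $h$. Summing the seminorm bounds over $0\leq m\leq l$ gives the full norm estimate for $\hP\leq$ diam$(\Omega)$.

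For $\PizP{k}$, both estimates rest on the fact that $\PizP{k}$ reproduces $\PS{k}(\P)$. First I would write
\begin{align*}
  \phi-\PizP{k}\phi=(\phi-Q^k\phi)-\PizP{k}(\phi-Q^k\phi).
\end{align*}
For $l=0$, $L^2$-stability ($\NORM{\PizP{k}w}{0,\P}\leq\NORM{w}{0,\P}$) finishes the argument immediately. For $l\geq1$, I would bound $\snorm{\PizP{k}(\phi-Q^k\phi)}{m,\P}$ with a polynomial inverse inequality $\snorm{p}{m,\P}\leq \Cs\hP^{-m}\NORM{p}{0,\P}$ on $\P$. This inequality holds uniformly because $\P$ contains a ball of radius $\delta_0\hP$ and is contained in a concentric ball of radius $\hP$, and on $\PS{k}$ all norms over these comparable balls are equivalent. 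Combining it with $L^2$-stability and the $l=0$ estimate gives $\hP^{-m}\cdot\hP^{s}\snorm{\phi}{s,\P}$, which is the claimed rate.

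For $\PinP{k}$ (with $s\geq1$), the projector again fixes polynomials, and its $H^1$-seminorm is controlled by $\snorm{\PinP{k}w}{1,\P}\leq\snorm{w}{1,\P}$ from the orthogonality in \eqref{H1proj}. The seminorm $l=1$ then follows as above. For $l=0$, I would use that $\phi-\PinP{k}\phi$ has zero boundary mean, which gives a Poincar\'e-type inequality $\NORM{w}{0,\P}\leq \Cs\hP\snorm{w}{1,\P}$ uniform under \textbf{(A1)}. That inequality needs a scaled trace inequality, which holds on star-shaped domains. For $l\geq2$, I would repeat the inverse-inequality argument.

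The main obstacle is verifying that every constant (Bramble--Hilbert, inverse, trace/Poincar\'e) is uniform under \textbf{(A1)} alone. The edge-length condition is not needed here; star-shapedness suffices. I would handle this by citing \cite{scott} for the chunky-domain constants.
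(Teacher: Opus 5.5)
Your proposal is correct and is essentially what the paper's proof amounts to. The paper gives no argument beyond citing \cite{scott}, and your reduction to the Bramble--Hilbert estimate on elements star-shaped with respect to a ball of radius $\geq\delta_0\hP$ is the standard way to fill in that reference: for $\PizP{k}$ you transfer it via polynomial reproduction, $L^2$-stability and a polynomial inverse inequality, and for $\PinP{k}$ via energy orthogonality plus the boundary-mean Poincar\'e (scaled trace) inequality. One cosmetic point: to match the form in which \cite{scott} states the Bramble--Hilbert lemma, take the averaged Taylor polynomial of degree $s-1$ when $s<k+1$, which still lies in $\PS{k}(\P)$.
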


\begin{lemma}[Interpolant approximation \cite{vem45}]
  \label{lemmaproj2}
  Under the assumption \textbf{(A1)}, for any $\phi\in\HS{s+1}(\P)$
  there exists {$\phiI\in V^h_k(E)$} for all $E\in\Th$ such that
  there holds the following
  \begin{equation}
    \NORM{\phi - \phiI}{0,\P} + \hP\snorm{\phi - \phiI}{1,\P}
    \leq \Cclem{} \hP^{1+s}\,\NORM{\phi}{s+1,\P},
    \quad 0\leq s\leq k,
    \label{projlll2}
  \end{equation}
  where the constant $\Cclem{}>0$ depends only on $k$ and $\delta_0$. 
\end{lemma}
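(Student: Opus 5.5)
The plan is to build the interpolant element by element, in the standard way for enhanced $\HONE$-conforming virtual element spaces. For $\phi\in\HS{s+1}(\P)$ with $0\leq s\leq k$, define $\phiI\in\Vshk(\P)$ as the unique function whose degrees of freedom $\DOFS{DF1}$, $\DOFS{DF2}$, $\DOFS{DF3}$ coincide with those of a suitable auxiliary approximation of $\phi$. Unisolvence of these DoFs for $\Vshk(\P)$ is assumed from \cite{vem22,vem25}.

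\textbf{Step 1: regularize for small $s$.} For $s\geq1$ the function $\phi$ is continuous in two dimensions, so $\phiI$ can take the DoFs of $\phi$ directly. For $s=0$, point values are not defined, so I would first replace $\phi$ by a Cl\'ement/Scott--Zhang-type quasi-interpolant $\phi_c$, piecewise polynomial on a sub-triangulation of $\P$. The sub-triangulation is obtained by joining the vertices to the center of the ball $\calB_{\P}$; it is shape regular by \textbf{(A1)}. The DoFs are then taken from $\phi_c$. This is where the Cl\'ement label of the constant $\Cclem$ comes from.

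\textbf{Step 2: split the error.} Let $\phi_\pi := \PinP{k}\phi$, or the averaged Taylor polynomial on $\calB_{\P}$. Since $\phi_\pi\in\PS{k}(\P)\subset\Vshk(\P)$, it is reproduced by the interpolation, so $\phiI-\phi_\pi = (\phi-\phi_\pi)_I$. By Lemma~\ref{lemmaproj1}, $\NORM{\phi-\phi_\pi}{0,\P}+\hP\snorm{\phi-\phi_\pi}{1,\P}\leq \Cs\hP^{s+1}\snorm{\phi}{s+1,\P}$. It therefore suffices to prove a stability estimate for the interpolation in scaled norms:
\begin{equation*}
\NORM{w_I}{0,\P}+\hP\snorm{w_I}{1,\P}\leq \Cs\big(\NORM{w}{0,\P}+\hP\snorm{w}{1,\P}+\hP^{2}\snorm{w}{2,\P}\big),
\end{equation*}
with the $\hP^{2}$ term dropped when the Step~1 regularization is used. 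Applying this with $w=\phi-\phi_\pi$ and using the Bramble--Hilbert lemma on star-shaped domains gives the claim.

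\textbf{Step 3: the stability estimate (main obstacle).} Bounding $\snorm{w_I}{1,\P}$ requires control of a virtual function through its DoFs only, with constants depending solely on $k$ and $\delta_0$. I would proceed in three parts.
\begin{itemize}
\item Use the characterization of $w_I$ as the solution of $\Delta w_I = p_k$ in $\P$ with polynomial boundary data, combined with a minimal-energy argument. This gives $\snorm{w_I}{1,\P}\lesssim \snorm{\widetilde w}{1,\P}$ for any $\HONE$ extension $\widetilde w$ sharing the same boundary trace and moments.
\item Bound the boundary trace by a scaled $H^{1/2}(\partial\P)$ norm, estimated through the edge values and the trace inequality. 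Condition \textbf{(A1)}, $\mE\geq\delta_0\hP$, keeps the number of edges and the edge scaling uniform.
\item Bound the internal moments by Cauchy--Schwarz.
\end{itemize}
The enhancement constraint (iv) relates the moments of degree $k-1$ and $k$ to $\PinP{k}w_I$, which is itself computable from the DoFs. Its contribution is then controlled by the stability of $\PinP{k}$.

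Finally, the $L^2$ part follows from the $H^1$ part via a Poincar\'e--Friedrichs inequality on $\P$, using the boundary average. Summing the local estimates over $\P$ is not needed, since the statement is local.
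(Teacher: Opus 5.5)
Your plan is correct and close to the argument behind the cited result. The paper gives no proof of its own; it imports the lemma from \cite{vem45}. There, as the name $C_{\mathrm{clem}}$ suggests, a Cl\'ement/Scott--Zhang quasi-interpolant on the shape-regular sub-triangulation induced by \textbf{(A1)} is transferred into the virtual element space, and the transfer is controlled by an energy argument.

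\textbf{Where your route differs.} You interpolate the nodal values of $\phi$ directly for $s\geq 1$ and regularize only for $s=0$. This makes $\phi_I$ the genuine DoF interpolant for smooth $\phi$, which is what Section~\ref{theo} calls the virtual interpolant. It has two costs:
\begin{itemize}
\item You need the second bullet of your Step~3. Point values must be bounded through a scaled embedding such as $\|w\|_{L^\infty(E)}\lesssim h_E^{-1}\|w\|_{0,E}+|w|_{1,E}+h_E|w|_{2,E}$, which is where your $h_E^2|w|_{2,E}$ term comes from.
\item Non-integer $0<s<1$ is left unaddressed.
\end{itemize}
If you regularize for every $s$, the quasi-interpolant $\phi_c$ is itself an admissible extension in your minimal-energy argument. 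On each edge of $E$ it is a single polynomial of degree $k$, fixed by the $k+1$ nodal values. Hence $(\phi_c)_I$ and $\phi_c$ share their trace and their moments up to order $k-2$. The $H^{1/2}$ step then disappears, and the Bramble--Hilbert lemma covers all $0\le s\le k$ at once.

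\textbf{An ingredient you leave implicit.} In either version, the enhanced moments need a bubble-function inverse estimate on the sub-triangulation. This can be $\|\Delta w_I\|_{0,E}\lesssim h_E^{-1}|w_I|_{1,E}$, or a bubble correction of the moments of degree $k-1$ and $k$. Stability of $\Pi^{\nabla,E}_k$ alone only gives $\|\tilde w-\Pi^{\nabla,E}_k\tilde w\|_{0,E}\lesssim h_E|\tilde w|_{1,E}$; you still have to control the polynomial $\Delta w_I$ that multiplies this term.

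\textbf{Caveat for $s=0$.} Your elementwise regularization proves the statement as literally written, but the local pieces do not glue into a conforming function. The paper, however, applies the lemma with $s=0$ to $\wv\in\Vv$ to obtain $\wv_I\in\Vv^h$ (Step~2 of the proof of Theorem~\ref{dwell1}). For that you need Scott--Zhang on the global sub-triangulation, with nodal functionals taken on shared edges and on boundary edges of $\partial\Omega$. The right-hand side then becomes a norm on a patch of elements around $E$.

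A purely elementwise right-hand side cannot hold for a conforming interpolant of $H^1$ functions. If $\phi$ vanishes on one element touching an interior vertex $z$, the bound forces $\phi_I(z)=0$. Yet an $H^1$ function can vanish there and still be close to $1$ on another element touching $z$, since points have zero capacity in two dimensions. The patch version is harmless once the local estimates are summed, which is how the lemma is used.
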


\begin{lemma}[Inverse inequality \cite{vem28}]
  \label{inverse}
  Under the assumption \textbf{(A1)}, for any virtual element function
  {$\phih\in V^h_k(E)$} defined on $\P\in\Th$, it holds that
  \begin{align}
    \snorm{\phih}{1,\P} \leq \Css{inv}\hP^{-1}\NORM{\phih}{0,\P},
  \end{align}
  where the positive constant $\Css{inv}$ is independent of the mesh
  size.
\end{lemma}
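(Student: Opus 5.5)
The plan is a scaling argument on a reference configuration, combined with norm equivalence on a finite-dimensional space. The key point is that, under assumption \textbf{(A1)}, the shapes of the elements $\P$ can be controlled uniformly. So the constant obtained on one fixed-size element can be made independent of $\P$.

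\textbf{Step 1: rescale.} Map $\P$ to $\widehat{\P}$ by $\xv\mapsto(\xv-\xvP)/\hP$, so that $\widehat{\P}$ has unit diameter. By the definition of $\Vshk(\P)$ (harmonic-type conditions $\Delta\phi\in\PS{k}$, piecewise polynomial traces, and the moment conditions involving $\PinP{k}$, all invariant under dilation), the pulled-back function $\widehat{\phi}_h$ lies in $\Vshk(\widehat{\P})$. Standard scaling gives
\begin{align*}
\snorm{\phih}{1,\P}=\snorm{\widehat{\phi}_h}{1,\widehat{\P}},
\qquad
\NORM{\phih}{0,\P}=\hP\NORM{\widehat{\phi}_h}{0,\widehat{\P}}.
\end{align*}
It therefore suffices to prove $\snorm{\widehat{\phi}_h}{1,\widehat{\P}}\le \Css{inv}\NORM{\widehat{\phi}_h}{0,\widehat{\P}}$ with $\Css{inv}$ independent of $\widehat{\P}$.

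\textbf{Step 2: norm equivalence.} On $\widehat{\P}$, the space $\Vshk(\widehat{\P})$ is finite dimensional. Both $\snorm{\cdot}{1}$ and $\NORM{\cdot}{0}$ are well defined on it, and $\NORM{\cdot}{0}$ is a norm, so some constant exists for each fixed shape. The task is to make this constant uniform over all rescaled elements allowed by \textbf{(A1)}.

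\textbf{Step 3 (main obstacle): uniformity over shapes.} The admissible polygons are star-shaped with respect to a ball of radius $\ge\delta_0$, have unit diameter, and have edges bounded below. Hence their number of edges is uniformly bounded, and the family is compact (in the Hausdorff sense). However, the dependence of the virtual space on the shape is implicit, so a bare compactness argument is delicate.

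My first attempt would avoid compactness and work through degrees of freedom, using known VEM estimates valid under \textbf{(A1)}:
\begin{itemize}
\item[(a)] the $H^1$ seminorm is bounded by the scaled DoF vector, $\snorm{\widehat{\phi}_h}{1}\lesssim|\DOFS{dof}(\widehat{\phi}_h)|$. This follows from an $H^1$ stability bound for the virtual function in terms of its boundary trace ($H^{1/2}$ norm of a piecewise polynomial on the boundary) and its $\Delta$-polynomial part, controlled via the internal moments.
\item[(b)] the DoF vector is bounded by the $L^2$ norm, $|\DOFS{dof}(\widehat{\phi}_h)|\lesssim\NORM{\widehat{\phi}_h}{0}$. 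For the internal moments this is Cauchy--Schwarz. For the boundary values I would use the trace inequality on the star-shaped subtriangles together with polynomial inverse estimates on the edges. This requires controlling $\NORM{\widehat{\phi}_h}{0,\partial\widehat{\P}}$ by $\NORM{\widehat{\phi}_h}{0,\widehat{\P}}$, which is exactly where I expect the real difficulty.
\end{itemize}

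To handle (b) I would split $\widehat{\phi}_h=\PinP{k}\widehat{\phi}_h+(\widehat{\phi}_h-\PinP{k}\widehat{\phi}_h)$:
\begin{itemize}
\item the polynomial part is handled by classical polynomial inverse estimates on the ball $\calB_{\P}$, together with equivalence of polynomial norms on $\widehat{\P}$ versus the ball;
\item the remainder is handled by the equivalence of $\NORM{\cdot}{0}$ and $\snorm{\cdot}{1}$ on the kernel of the projector, uniformly in the shape, in the spirit of the stabilization bounds \eqref{vem-a}--\eqref{vem-c}.
\end{itemize}

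Failing this, I would simply invoke the established result cited in \cite{vem28}, whose proof is this scaling-plus-DoF argument. Combining (a), (b) and Step 1 yields $\snorm{\phih}{1,\P}\le \Css{inv}\hP^{-1}\NORM{\phih}{0,\P}$.
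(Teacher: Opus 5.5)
The paper gives no argument for this lemma; it imports it directly from \cite{vem28}, so your closing fallback is exactly what the paper does. Your Steps~1--2 are sound, and (a) is a standard estimate under \textbf{(A1)}. As a self-contained proof, however, the sketch does not close at (b). You correctly identify (b) as the real content of the lemma: the shape-uniform trace bound $\NORM{\widehat{\phi}_h}{0,\partial\widehat{\P}}\lesssim\NORM{\widehat{\phi}_h}{0,\widehat{\P}}$. Your proposed splitting only relocates that difficulty, for two reasons.

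\begin{itemize}
\item $\PinP{k}$ is not $L^2$-stable. $\PinP{k}\widehat{\phi}_h$ is determined by $\nabla\widehat{\phi}_h$ and the boundary mean of $\widehat{\phi}_h$, so bounding it by $\NORM{\widehat{\phi}_h}{0,\widehat{\P}}$ already presupposes the inverse/trace estimate.
\item On $\ker\PinP{k}$ only the Poincar\'e direction $\NORM{w}{0}\lesssim\snorm{w}{1}$ comes for free. The direction you need, $\snorm{w}{1}\lesssim\NORM{w}{0}$, is the lemma itself restricted to the kernel. The bounds \eqref{vem-a}--\eqref{vem-c} cannot supply it, because they compare $\SP_i$ with $\snorm{\cdot}{1}$ only and contain no $L^2$ information.
\end{itemize}

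Replacing $\PinP{k}$ by the $L^2$-stable $\PizP{k}$ repairs the first point but not the second.

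The missing ingredient is a duality argument that uses $\Delta\widehat{\phi}_h\in\PS{k}(\widehat{\P})$ directly.

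(i) Take a smooth bump $b$ supported in the rescaled ball $\calB_{\P}$ of \textbf{(A1)}. Polynomial norm equivalence and two integrations by parts give $\NORM{\Delta\widehat{\phi}_h}{0,\widehat{\P}}^2\lesssim(\Delta\widehat{\phi}_h,b\,\Delta\widehat{\phi}_h)=(\widehat{\phi}_h,\Delta(b\,\Delta\widehat{\phi}_h))\lesssim\NORM{\widehat{\phi}_h}{0,\widehat{\P}}\NORM{\Delta\widehat{\phi}_h}{0,\widehat{\P}}$.

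(ii) For each edge $e$, fix a cut-off $\eta_e\geq 0$ compactly supported in the interior of $e$. Build a smooth $\varphi$, supported in thin strips along the edges inside the sub-triangles with apex at the centre of that ball, such that:
\begin{itemize}
\item $\varphi=0$ on $\partial\widehat{\P}$;
\item $\partial_n\varphi=\eta_e\,\widehat{\phi}_h$ on each $e$;
\item $\NORM{\varphi}{0,\widehat{\P}}+\NORM{\Delta\varphi}{0,\widehat{\P}}\lesssim\NORM{\widehat{\phi}_h}{0,\partial\widehat{\P}}$.
\end{itemize}
Green's formula then gives $\sum_{e}\int_e\eta_e\,\widehat{\phi}_h^2=(\widehat{\phi}_h,\Delta\varphi)-(\Delta\widehat{\phi}_h,\varphi)$. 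Combined with (i) and polynomial norm equivalence on edges of length $\gtrsim\delta_0$, this yields the trace bound.

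(iii) From there you can either finish your DoF route, since the boundary DoFs are values of edge polynomials and are controlled by the trace norm. Alternatively, lift the trace by a continuous piecewise-$\PS{k}$ function $w$ on that sub-triangulation and use $\snorm{\widehat{\phi}_h-w}{1,\widehat{\P}}\lesssim\NORM{\Delta\widehat{\phi}_h}{0,\widehat{\P}}+\snorm{w}{1,\widehat{\P}}$. All constants then depend only on $k$ and $\delta_0$.
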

 \medskip\noindent { We also stress that the Lemma~\ref{lemmaproj1}, Lemma~\ref{lemmaproj2}, and Lemma~\ref{inverse} are also valid for vector-valued functions belonging to $\Vv^h$}.
Additionally, following \cite[cf. Eq. (96)]{mishra2025supg}, we have
\begin{align}
  \NORM{\PvzP{k-1}\nabla\vvh}{0,4,\P}
  \leq \Css{inv} \hP^{-1} \NORM{\vvh}{0,4,\P}
  \qquad \text{ for all} \,\, \vvh\in \Vvh{}(\P).
  \label{bound1}
\end{align}
We now define the mesh-dependent energy norms over $\Vvh{}\times\Qsh$
as follows:
\begin{align*}
  \normiii{(\vvh, \qsh)}^2
  &= \NORM{\nabla\vvh}{0,\Omega}^2 + \theta \NORM{\qsh}{0,\Omega}^2
  + \calLss{1,h}(\qsh,\qsh) + \calLss{3,h}(\vvh,\vvh),
  \qquad\text{for~all}\,\,
  (\vvh,\qsh)\in\Vvh{}\times\Qsh,
  \nonumber
  %	\normiii{\phi}^2 & =  \|\nabla \phih\|^2_{0, \Omega}  \nonumber
\end{align*}
where the positive constant $\theta$ does not rely on the mesh size,
given by
\begin{align*}
  \theta =
  \frac{2\beta_0}{ (3\mu\alpha^\ast + \alpha^\ast + 3\lambda_2^\ast)\widehat{\Cs}_{\text{clem}} }.
\end{align*}
Concerning the discrete spaces ${\mathlarger \Phi_h}$ and
$\mathlarger\Phi_{0,h}$, we utilize the usual $\HONE$-semi norm for
the subsequent analysis.
The stability properties of the bilinear forms $\asVh(\cdot,\cdot)$
and $\asph(\cdot,\cdot)$ is discussed in the following Lemma:

\begin{lemma} \label{estimate}
  For all $\vvh,\wvh\in\Vvh{} $ and $\phih,\xi_h \in {\mathlarger \Phi_h}$,
  there holds the following:
  \begin{align}
    \abs{\asVh(\vvh, \wvh)}   &\leq \alpha^\ast \mu \NORM{\nabla \vvh}{\Omega}\,\NORM{\nabla \wvh}{\Omega},
    \label{est1} \\
    \abs{\asVh(\vvh, \vvh)}   &\geq \alpha_\ast \mu \NORM{\nabla \vvh}{\Omega}^2, 
    \label{est2} \\
    \abs{\asph(\phih, \xi_h)} &\leq \gamma^\ast \epsilon \NORM{\nabla \phih}{\Omega} \NORM{\nabla\xi_h}{\Omega},
    \label{est3} \\
    \abs{\asph(\xi_h, \xi_h)} &\geq \gamma_\ast \epsilon\NORM{\nabla \xi_h}{\Omega}^2,
    \label{est4}
  \end{align}
  where
  $\alpha^\ast:= \max\{1,\lambda_1^\ast\}$,
  $\alpha_\ast:= \min\{1,\lambda_{1\ast}\}$, and
  $\gamma^\ast:= \max\{1,\lambda_3^\ast \}$,
  $\gamma_\ast:= \min\{1,\lambda_{3\ast}\}$.
\end{lemma}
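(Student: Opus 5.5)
The plan is to argue element by element and then sum over $\P\in\Th$. The key ingredient is an orthogonal decomposition of each discrete function into its energy projection plus a remainder in the kernel of the projector. I give the argument for $\asVh$; the one for $\asph$ is identical, with $\mu$ replaced by $\epsilon$, $\SP_1$ by $\SP_3$, and \eqref{vem-a} by \eqref{vem-c}. In both cases I read $\asVh(\vvh,\wvh)=\sum_{\P}\asPVh(\vvh,\wvh)$, and likewise for $\asph$.

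\textbf{Continuity \eqref{est1}.} Each local form is the sum of a consistency part and a stabilization part.

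For the consistency part, the Cauchy--Schwarz inequality and the $\LTWO$-stability of $\PvzP{k-1}$ (it is an orthogonal projection, so $\NORM{\PvzP{k-1}\nabla\vvh}{0,\P}\le\NORM{\nabla\vvh}{0,\P}$) give the bound $\mu\NORM{\nabla\vvh}{0,\P}\NORM{\nabla\wvh}{0,\P}$.

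For the stabilization part, $\SP_1$ is symmetric positive semidefinite, so Cauchy--Schwarz applies in the $\SP_1$-inner product:
\[
\SP_1(\vvh-\PvnP{k}\vvh,\wvh-\PvnP{k}\wvh)\le \SP_1(\vvh-\PvnP{k}\vvh,\vvh-\PvnP{k}\vvh)^{1/2}\,\SP_1(\wvh-\PvnP{k}\wvh,\wvh-\PvnP{k}\wvh)^{1/2}.
\]
Since $\vvh-\PvnP{k}\vvh\in\ker(\PvnP{k})$, the upper bound in \eqref{vem-a} applies, giving $\lambda_1^\ast\NORM{\nabla(\vvh-\PvnP{k}\vvh)}{0,\P}^2$. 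The $\HONE$-orthogonality in \eqref{H1proj2} then yields the Pythagorean identity
\[
\NORM{\nabla\vvh}{0,\P}^2=\NORM{\nabla\PvnP{k}\vvh}{0,\P}^2+\NORM{\nabla(\vvh-\PvnP{k}\vvh)}{0,\P}^2 ,
\]
so this remainder is bounded by $\NORM{\nabla\vvh}{0,\P}$.

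To reach the constant $\alpha^\ast=\max\{1,\lambda_1^\ast\}$ rather than $1+\lambda_1^\ast$, I do not bound the two parts separately. Instead I split the consistency integrand as well: since $\PvzP{k-1}\nabla\PvnP{k}\vvh=\nabla\PvnP{k}\vvh$, I write $\PvzP{k-1}\nabla\vvh=\nabla\PvnP{k}\vvh+\PvzP{k-1}\nabla(\vvh-\PvnP{k}\vvh)$. I then apply a single discrete Cauchy--Schwarz inequality to the vector of local quantities $\big(\NORM{\PvzP{k-1}\nabla\vvh}{0,\P},\ \SP_1(\cdot,\cdot)^{1/2}\big)$, summed over all $\P\in\Th$, together with the Pythagorean splitting. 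This is a routine piece of bookkeeping.

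\textbf{Coercivity \eqref{est2}.} This is the step that needs care. I take $\vvh=\wvh$ and use the consistency identity $\PvzP{k-1}\nabla\PvnP{k}\vvh=\nabla\PvnP{k}\vvh$ together with the orthogonality in \eqref{H1proj2}. These give
\[
\NORM{\PvzP{k-1}\nabla\vvh}{0,\P}^2\ge\NORM{\nabla\PvnP{k}\vvh}{0,\P}^2 ,
\]
because $\PvnP{k}\vvh$ is the $\HONE$-best approximation from $[\PS{k}(\P)]^2$ and $\PvzP{k-1}$ does not decrease the contribution along polynomial gradients. The lower bound in \eqref{vem-a} gives
\[
\SP_1(\vvh-\PvnP{k}\vvh,\vvh-\PvnP{k}\vvh)\ge\lambda_{1\ast}\NORM{\nabla(\vvh-\PvnP{k}\vvh)}{0,\P}^2 .
\]
Adding the two bounds and using the Pythagorean identity gives the local estimate $\mu\min\{1,\lambda_{1\ast}\}\NORM{\nabla\vvh}{0,\P}^2$. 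Summing over $\P\in\Th$ gives \eqref{est2} with $\alpha_\ast=\min\{1,\lambda_{1\ast}\}$.

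The main obstacle is justifying the inequality $\NORM{\PvzP{k-1}\nabla\vvh}{0,\P}\ge\NORM{\nabla\PvnP{k}\vvh}{0,\P}$. It follows by testing the definition of $\PvzP{k-1}$ with $\nabla\PvnP{k}\vvh\in[\PS{k-1}(\P)]^{2\times2}$, which gives $(\PvzP{k-1}\nabla\vvh,\nabla\PvnP{k}\vvh)_\P=(\nabla\vvh,\nabla\PvnP{k}\vvh)_\P=\NORM{\nabla\PvnP{k}\vvh}{0,\P}^2$, and then applying Cauchy--Schwarz. The same argument with $\PinP{k}$ and $\SP_3$ gives \eqref{est3} and \eqref{est4} with $\gamma^\ast$ and $\gamma_\ast$.
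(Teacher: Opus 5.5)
Your coercivity argument for \eqref{est2} and \eqref{est4} is correct and standard; the paper gives no proof of its own and only points to \cite[Lemma~5.3]{vem28m}. The gap is in the continuity bounds \eqref{est1} and \eqref{est3}, at the step you call routine bookkeeping, and it only bites for $k\geq2$. Put $r:=\vvh-\PvnP{k}\vvh$. The orthogonality you use for coercivity shows that the cross term $\int_\P\nabla\PvnP{k}\vvh:\PvzP{k-1}\nabla r\dE=\int_\P\nabla\PvnP{k}\vvh:\nabla r\dE$ vanishes, hence
\[
\mu^{-1}\asPVh(\vvh,\vvh)=\NORM{\nabla\PvnP{k}\vvh}{0,\P}^2+\NORM{\PvzP{k-1}\nabla r}{0,\P}^2+\SP_1(r,r).
\]
So $r$ is charged twice, once through the consistency term and once through $\SP_1$, while the Pythagorean identity has room for only one copy of $\NORM{\nabla r}{0,\P}^2$.

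For $k=1$ the middle term is zero, because $\PvzP{0}\nabla\vvh=\nabla\PvnP{1}\vvh$, and your argument does give $\max\{1,\lambda_1^\ast\}$. For $k\geq2$, $[\PS{k-1}(\P)]^{2\times2}$ is strictly larger than $\nabla[\PS{k}(\P)]^2$. On a generic polygon there are therefore $r\in\ker(\PvnP{k})$ with $\PvzP{k-1}\nabla r\neq\mathbf{0}$. Hypothesis \eqref{vem-a} cannot compensate for this. The model choice $\SP_1(\vv,\wv)=\int_\P\nabla\vv:\nabla\wv\dE$ satisfies \eqref{vem-a} with $\lambda_{1\ast}=\lambda_1^\ast=1$, so $\alpha^\ast=1$. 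Yet the identity above gives
\[
\mu^{-1}\asVh(\vvh,\vvh)=\NORM{\nabla\vvh}{\Omega}^2+\sum_{\P\in\Th}\NORM{\PvzP{k-1}\nabla(\vvh-\PvnP{k}\vvh)}{0,\P}^2,
\]
which exceeds $\NORM{\nabla\vvh}{\Omega}^2$ as soon as one of the last terms is nonzero. Taking $\wvh=\vvh$ in \eqref{est1} then shows that no Cauchy--Schwarz bookkeeping can produce $\alpha^\ast$ from \eqref{vem-a} alone.

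What your argument actually proves is \eqref{est1} with $1+\lambda_1^\ast$ in place of $\alpha^\ast$. In the identity above, bound $\NORM{\PvzP{k-1}\nabla r}{0,\P}^2\leq\NORM{\nabla r}{0,\P}^2$ and $\SP_1(r,r)\leq\lambda_1^\ast\NORM{\nabla r}{0,\P}^2$. The Pythagorean identity then gives $\mu^{-1}\asPVh(\vvh,\vvh)\leq(1+\lambda_1^\ast)\NORM{\nabla\vvh}{0,\P}^2$. Apply Cauchy--Schwarz for the symmetric positive semidefinite form and sum over elements. In the same way, \eqref{est3} holds with $1+\lambda_3^\ast$.

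The $\max$-type constants would be correct for a consistency term $\int_\P\mu\nabla\PvnP{k}\vvh:\nabla\PvnP{k}\wvh\dE$. They are not correct for the $\PvzP{k-1}$-based term used in this paper when $k\geq2$. Since $\alpha^\ast$ and $\gamma^\ast$ enter the later analysis only as mesh-independent constants (e.g. in $\theta$), the right repair is one of two: state the continuity constants as $1+\lambda_1^\ast$ and $1+\lambda_3^\ast$, or restrict the sharper claim to $k=1$.
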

\begin{proof}
  The proof of Lemma \ref{estimate} follows from
  \cite[Lemma~5.3]{vem28m} with slight modification.
  Therefore, we skip the proof.
\end{proof}

\subsection{Stability of discrete decoupled problems}

We now present the well-posedness of the discrete decoupled problem
\eqref{dsvem1} by establishing the generalized discrete inf-sup
condition for the velocity and pressure fields and the continuity of
the bilinear form {$\calA_{\PSPG{},\hh}\big(\psi_h; \cdot,\cdot\big)$ with given $\psih\in \mathlarger \Phi_h$}.
Concerning this, we present the following:
\begin{theorem} \label{dwell1}
  Let $\psih\in{\mathlarger \Phi_h}$ and the stabilization parameters
  $\tau_{\P}$ and $\delta_{\P}$ satisfy the following assumptions:
  \begin{align}
    \NORM{\nabla\psih}{\Omega}
    &\leq \frac{ \alpha_\ast\mu}{2\P^\ast \Css{1 \hookrightarrow4}^2},
    \label{stabc1} \\
    %% --------------------------------------------------------------
    \frac{\hP^2}{\lambda_2^{\ast^2}}\leq \tau_{\P}
    &\leq \frac{1}{16} \min\Big\{\frac{\alpha^2_\ast\hP^2}{\Css{inv}^2};
    \frac{ \hP^2}{ \Css{inv}^2 } \Big\} \qquad \,\,
    \text{and} \qquad  \max_{\P\in\Th} \delta_{\P} \leq 1\qquad
    \text{for~all} \,\, \P\in\Th.
    \label{stabc2} 
    %	\,\, \text{with} \,\, \rho_\ast:= \min \Big\{\frac{\mu \alpha_\ast}{2}, 1 \Big\}. 
  \end{align}
  %%
  %% \MGT{
  Also, let the physical and stabilization parameters satisfy
  $\mu\alpha_\ast\leq2$ for small enough values of $\alpha_\ast$.
  %% }
  %%
  Then, for any $(\uvh,\psh)\in\Vvh{}\times\Qsh$, there exists at
  least one $(\vvh,\qsh)\in\Vvh{} \times \Qsh$ such that the following
  holds
  \begin{align}
    \sup\limits_{(\mathbf{0},0)\neq (\vvh,\qsh)\in\Vvh{}\times\Qsh}
    \dfrac{\calA_{\PSPG{},\hh} \big(\psih;(\uvh,\psh),(\vvh,\qsh)\big)}{\normiii{(\vvh,\qsh)}}
    \geq \beta_\ast\normiii{(\uvh,\psh)},
    \label{d1well1}
    %	|A_{V,h} [ \widehat{\theta}_h; (\uvh, \psh), (\vvh, \qsh)]| &\leq C\normiii{(\uvh, \psh)}\, \normiii{(\vvh, \qsh)},   \label{d1well2}
  \end{align}
  where the positive constant $\beta_\ast$ does not rely on the mesh
  size and depends on the data of {Problem~\eqref{P}}.
  Furthermore, {Problem~\eqref{dsvem1}} has a unique solution such
  that
  \begin{align}
    \normiii{(\uvh, \psh)} \leq \frac{1}{\beta_\ast}
    \Big( \Css{P} + \frac{\alpha_\ast\abs{\Omega}^{1/2}}{4 \Css{inv}}\Big)
    \Big(
    \NORM{\fv}{\Omega} + \East \NORM{\gs}{\Omega} +
    \frac{\alpha_\ast \Css{P}\kappa^\ast \mu}{2\Css{1 \hookrightarrow 4}^2}
    \Big).
    \label{dstabv}
  \end{align}
\end{theorem}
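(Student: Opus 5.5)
The plan is the standard three-step argument for equal-order stabilized schemes (cf.\ \cite{vem028,mishra2025equal}). First I test with $(\vvh,\qsh)=(\uvh,\psh)$ to control every term of $\normiii{(\uvh,\psh)}$ except $\theta\NORM{\psh}{\Omega}^2$. Then I test with a velocity built from the continuous inf-sup condition \eqref{con-binf} to recover $\NORM{\psh}{\Omega}$. Finally, a weighted sum of the two choices gives \eqref{d1well1}.

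\textbf{Step 1 (diagonal test).} The terms $-\bsPh(\uvh,\psh)+\bsPh(\uvh,\psh)$ cancel. By \eqref{est2}, the form $\asVh$ gives $\alpha_\ast\mu\NORM{\nabla\uvh}{\Omega}^2$, and the terms $\calLss{1,h}(\psh,\psh)$ and $\calLss{3,h}(\uvh,\uvh)$ appear exactly as in the norm. The convective term is bounded by Hölder, Remark~\ref{qstab} and the embedding $H^1\hookrightarrow L^4$:
\[
|\csh(\psih;\uvh,\uvh)|\le \Es^\ast\Css{1\hookrightarrow4}^2\NORM{\nabla\psih}{\Omega}\NORM{\nabla\uvh}{\Omega}^2 .
\]
By \eqref{stabc1} this is at most $\tfrac{\alpha_\ast\mu}{2}\NORM{\nabla\uvh}{\Omega}^2$; here I read the paper's $\P^\ast$ as $\Es^\ast$. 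The term $\calLss{2,h}(\psih;\uvh,\psh)$ is split by Young's inequality into a portion absorbed by $\tfrac12\calLss{1,h}(\psh,\psh)$ and a remainder $C\tau_{\P}\NORM{\nabla\cdot\PvzP{k-1}\nabla\uvh}{0,\P}^2+C\tau_{\P}\NORM{\PvzP{k}\uvh\cdot\PvzP{k-1}\nabla\psih}{0,\P}^2$. To make the remainder a small multiple of $\NORM{\nabla\uvh}{\Omega}^2$:
\begin{itemize}
\item the first piece uses the polynomial inverse estimate (Lemma~\ref{inverse}) together with $\tau_{\P}\le \alpha_\ast^2\hP^2/(16\Css{inv}^2)$;
\item the second piece uses \eqref{bound1}, the $L^4$ embedding and \eqref{stabc1}.
\end{itemize}
The result is
\[
\calA_{\PSPG,h}\big(\psih;(\uvh,\psh),(\uvh,\psh)\big)\ge c_1\big(\NORM{\nabla\uvh}{\Omega}^2+\calLss{1,h}(\psh,\psh)+\calLss{3,h}(\uvh,\uvh)\big),
\]
with $c_1$ depending on $\alpha_\ast\mu$. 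The hypothesis $\mu\alpha_\ast\le2$ is used to keep this constant of the form $\min\{\alpha_\ast\mu/4,\tfrac12\}$.

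\textbf{Step 2 (pressure control).} By \eqref{con-binf} there is $\wv\in\Vv$ with $\bs(\wv,\psh)=\NORM{\psh}{\Omega}^2$ and $\NORM{\nabla\wv}{\Omega}\le\beta_0^{-1}\NORM{\psh}{\Omega}$. I take $\vvh=-\wvI$, the interpolant from Lemma~\ref{lemmaproj2}, whose $H^1$-stability constant I call $\widehat{\Cs}_{\text{clem}}$, and $\qsh=0$. Then
\[
\bsPh(\wvI,\psh)=\bs(\wv,\psh)-\bs(\wv-\wvI,\psh)+\text{(projection consistency)}.
\]
After integrating by parts elementwise, the defect is bounded by
\[
C\sum_{\P}\hP\NORM{\wv}{1,\P}\big(\NORM{\PvzP{k-1}\nabla\psh}{0,\P}+\hP^{-1}\text{-weighted stabilization}\big)\lesssim \NORM{\psh}{\Omega}\,\calLss{1,h}(\psh,\psh)^{1/2}.
\]
Here the lower bound $\tau_{\P}\ge\hP^2/\lambda_2^{\ast 2}$ lets $\hP^2\NORM{\nabla\psh}{}^2$ be converted into $\calLss{1,h}$ using \eqref{vem-b}. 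The other terms, $\asVh$, $\csh$ and $\calLss{3,h}$ (with $\delta_{\P}\le1$), are bounded by $(\mu\alpha^\ast+\alpha^\ast+\dots)\NORM{\nabla\uvh}{}\NORM{\psh}{}/\beta_0$. Young's inequality then gives
\[
\calA_{\PSPG,h}\big(\psih;(\uvh,\psh),(-\wvI,0)\big)\ge \tfrac12\NORM{\psh}{\Omega}^2-c_2\big(\NORM{\nabla\uvh}{\Omega}^2+\calLss{1,h}(\psh,\psh)\big).
\]
The constant $\theta$ is designed for exactly this step: $3\mu\alpha^\ast+\alpha^\ast+3\lambda_2^\ast$ collects these continuity constants.

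\textbf{Step 3 (combination and consequences).} I choose $(\vvh,\qsh)=(\uvh-\theta\wvI,\psh)$, with $\theta$ small enough that $\theta c_2\le c_1/2$. This yields a lower bound of $\min\{c_1/2,\theta/2\}\normiii{(\uvh,\psh)}^2$. Next I show $\normiii{(\uvh-\theta\wvI,\psh)}\le C\normiii{(\uvh,\psh)}$, using the stability of $\wvI$ and the bound on $\calLss{3,h}(\wvI,\wvI)$ with $\delta_{\P}\le1$. Dividing gives \eqref{d1well1} with a mesh-independent $\beta_\ast$. Since the problem is finite-dimensional and square, the inf-sup condition gives uniqueness and hence existence.

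For \eqref{dstabv}, I bound $\Fs^{\psih}_{\PSPG,h}(\vvh,\qsh)$ as follows:
\begin{itemize}
\item Cauchy--Schwarz, Poincaré and the $L^2$-stability of $\PizP{k}$ give $\Css{P}(\NORM{\fv}{}+\Es^\ast\NORM{\gs}{}+\Es^\ast\kappa^\ast\NORM{\psih}{})\NORM{\nabla\vvh}{}$;
\item $\kappa(0)=0$ and the Lipschitz bound give $\NORM{\kappa(\PizP{k}\psih)}{}\le\kappa^\ast\Css{P}\NORM{\nabla\psih}{}$, after which \eqref{stabc1} turns the $\kappa$ term into $\alpha_\ast\Css{P}\kappa^\ast\mu/(2\Css{1\hookrightarrow4}^2)$;
\item for the $\tau_{\P}$-terms, $\tau_{\P}^{1/2}\le \alpha_\ast\hP/(4\Css{inv})$ and Lemma~\ref{inverse} give $\tau_{\P}^{1/2}\NORM{\PvzP{k-1}\nabla\qsh}{}\lesssim\calLss{1,h}(\qsh,\qsh)^{1/2}$, which accounts for the factor $\alpha_\ast\abs{\Omega}^{1/2}/(4\Css{inv})$.
\end{itemize}

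\textbf{Main obstacle.} I expect Step 2 to be the hardest: the consistency error between $\bsPh$ and $\bs$ under the projections $\PizP{k-1}$ and $\PizP{k}$, together with the interpolation defect, must be bounded by $\calLss{1,h}$ alone. This is where the lower bound on $\tau_{\P}$ in \eqref{stabc2} and the stabilization bound \eqref{vem-b} become essential. I would follow the argument of \cite{vem028,mishra2025equal} closely and track the constants so that they match the stated $\theta$.
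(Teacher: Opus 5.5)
Your proposal is correct and follows essentially the same route as the paper. The steps match: a diagonal test; a test with $(-\mathbf{w}_I,0)$ built from the continuous inf-sup condition and the virtual interpolant, where the lower bound on $\tau_E$ turns the consistency defects of $b_h$ into $\mathcal{L}_{1,h}^{1/2}$; the combination $(\mathbf{u}_h-\Theta\mathbf{w}_I,p_h)$ with boundedness of that test pair; and the load estimate. The only differences are that you split $\mathcal{L}_{2,h}$ with Young's inequality, which is where you need $\mu\alpha_\ast\le 2$, whereas the paper bounds it directly as a product. You should also use a separate combination parameter $\Theta$ as the paper does, because the weight $\theta$ is fixed by the definition of the energy norm and cannot be chosen small; it only enters the final constant $\beta_\ast$.
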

\begin{proof}
  We first introduce the following settings:
  \begin{align*}
    \normiii{\vvh}^2_\ast
    &= \NORM{\nabla\vvh}{0,\Omega}^2
    + \calLss{1,h}(\qsh,\qsh)
    + \calLss{3,h}(\vvh,\vvh)
    \qquad \text{for all} \,\, (\vvh,\qsh)\in\Vvh{}\times\Qsh,
    \nonumber \\
    {Y} & = {\theta \NORM{\psh}{\Omega}}.
    \nonumber
    %	\normiii{\phi}^2 & =  \|\nabla \phih\|^2_{0, \Omega}  \nonumber
  \end{align*}
  The proof of Theorem \ref{dwell1} follows in the
  following steps:

  \medskip\noindent
  \textbf{Step 1.} {In this step, we seek a lower bound for $\calA_{\PSPG{},\hh} \big( \psih; (\uvh, \psh), (\uvh, \psh) \big)$, for any $(\uvh, \psh) \in \Vvh{}\times \Qsh$ and given $\psih \in \mathlarger \Phi_h$. In this sequel, invoking the inequality~\eqref{est2}, we have the following for given $\psih \in \mathlarger\Phi_h$}
  \begin{align}
    \calA_{\PSPG{},\hh} \big( \psih; &(\uvh, \psh), (\uvh, \psh) \big) \nonumber\\
      &= \asVh(\uvh, \uvh) + \csh(\psih;\uvh, \uvh)
      + \calLss{1,\hh}(\psh,\psh)
      + \calLss{2,\hh}(\psih;\uvh,\psh)
      + \calLss{3,\hh}(\uvh,\uvh) \nonumber\\
      &\geq  \mu \alpha_\ast \NORM{\nabla\uvh}{\Omega}^2
      + \csh(\psih;\uvh,\uvh)
      + \calLss{1,\hh}(\psh,\psh)
      + \calLss{2,\hh}(\psih;\uvh,\psh)
      + \calLss{3,\hh}(\uvh,\uvh).
      \label{vstab1}
  \end{align}
  Applying Remark \ref{qstab}, the H\"older inequality,
  and the Sobolev embedding theorem, we have
  \begin{align}
    \csh(\psih;\uvh, \uvh)
    &\leq \East \sum_{\P\in\Th}\NORM{\PvzP{k}\uvh}{0,4,\P}^2\,\NORM{\nabla\psih}{\P} \nonumber\\ 
    &\leq \East \Css{1\hookrightarrow 4}^2 \NORM{\nabla\uvh}{\Omega}^2\,\NORM{\nabla\psih}{\Omega} \nonumber\\
    &\leq \frac{\alpha_\ast\mu}{2} \NORM{\nabla\uvh}{\Omega}^2 {.}
    \qquad\qquad\qquad \text{(using~\eqref{stabc1})}\label{vstab2}
  \end{align}
  Employing the Cauchy-Schwarz inequality and Lemma \ref{inverse}, it
  gives
  \begin{align}
    \calLss{2,h}(\psih;\uvh,\psh)
    &\leq \sum_{\P\in\Th}\tau_{\P}
    \NORM{
      - \mu\nabla\cdot\PvzP{k-1}\nabla\uvh
      + \big(\PvzP{k}\uvh\cdot\PvzP{k-1}\nabla\psih\big)\Ev }{\P}\,
    \NORM{ \PvzP{k-1}\nabla\psh }{\P} \nonumber\\
    %% --------------------------------------------------------------------------------------
    &\leq \sum_{\P\in\Th}\tau_{\P}
    \big[
      \mu\Css{inv}\,\hP^{-1}\NORM{\nabla\uvh}{\P}
      + \East\NORM{\PvzP{k}\uvh}{0,4,\P}\,\NORM{\PvzP{k-1}\nabla\psih}{0,4,\P}
      \big]\,\NORM{\PvzP{k-1}\nabla\psh}{\P}{.} \nonumber
    %% --------------------------------------------------------------------------------------
    \intertext{We use the bound \eqref{bound1}, Remark \ref{qstab},
      the Sobolev embedding theorem, and \eqref{stabc1}:}
    %% --------------------------------------------------------------------------------------
    \calLss{2,h}(\psih;\uvh,\psh)
    &\leq \sum_{\P\in\Th} \tau_{\P}^{1/2}\big[
      \mu\Css{inv}\,\hP^{-1}\NORM{\nabla\uvh}{\P}
      + \East \Css{inv}\hP^{-1}\NORM{\uvh}{0,4,\P}\,\NORM{\psih}{0,4,\P}
      \big]\,\normiii{\uvh}_{\ast,\P} \nonumber\\
    %% --------------------------------------------------------------------------------------
    & \leq \max_{\P\in\Th} \frac{\tau_{\P}^{1/2}}{\hP}\big[
      \mu\Css{inv}\,\NORM{\nabla\uvh}{\Omega}
      + \East\Css{inv}\Css{1\hookrightarrow 4}^2\NORM{\nabla\uvh}{\Omega}\,\NORM{\nabla\psih}{\Omega}
      \big]\,\normiii{\uvh}_{\ast} \nonumber\\
    %% --------------------------------------------------------------------------------------
    & \leq \Css{inv} \max_{\P\in\Th}\frac{\tau_{\P}^{1/2}}{\hP}\Big(
    \mu + \frac{\alpha_\ast\mu}{2 }
    \Big)\,\normiii{\uvh}^2_\ast. \label{vstab3}
  \end{align}
  Combining \eqref{vstab1}, \eqref{vstab2}{,} and \eqref{vstab3}, we
  arrive at
  \begin{align}
    &\calA_{\PSPG{},\hh} \big( \psih; (\uvh, \psh), (\uvh, \psh) \big)\nonumber \\
    %% -------------------------------------------------------------------------
    &\qquad
    \geq \frac{\mu\alpha_\ast}{2}\NORM{\nabla\uvh}{\Omega}^2
    + \calLss{1,h}(\psh,\psh)
    + \calLss{3,h}(\uvh,\uvh)
    - \Css{inv} \max_{\P\in\Th} \frac{\tau_{\P}^{1/2}}{\hP}
    \Big(\mu + \frac{\alpha_\ast \mu}{2}\Big)\normiii{\uvh}^2_\ast\nonumber \\
    %% -------------------------------------------------------------------------
    &\qquad
    \geq \bigg( \min\big\{\frac{\mu\alpha_\ast}{2}, 1\big\}
    - \Css{inv} \max_{\P\in\Th}\frac{\tau_{\P}^{1/2}}{\hP}
    \Big(\mu + \frac{\alpha_\ast \mu}{2 }\Big) \bigg)\,\normiii{\uvh}^2_\ast\nonumber \\
    %% -------------------------------------------------------------------------
    &\qquad
    \geq \bigg(
    \frac{\mu\alpha_\ast}{2} -
    \frac{\mu\alpha_\ast}{4} -
    \frac{\mu\alpha_\ast}{8} \bigg)
    \normiii{\uvh}^2_\ast
    \qquad\qquad\text{(using \eqref{stabc2})} \nonumber \\
    &\qquad
    \geq \frac{\mu \alpha_\ast}{8} \normiii{\uvh}^2_\ast.
    \label{vstab4}
  \end{align}
  %%
  %% \MGT{
 % Note that in the second-to-last line, we use
  %condition~\eqref{coer_cond}; see Remark~\ref{rem:coercivity}.
  %% }
  { Note that in the last second line, we use
   $\min\big\{\frac{\mu\alpha_\ast}{2},1\big\}=\frac{\mu\alpha_\ast}{2}$
   for small value of $\alpha_\ast$; see Remark~\ref{rem:coercivity}.}

  \medskip\noindent
  \textbf{Step 2.} {We now test $ \calA_{\PSPG{},\hh}\big(\psih;(\uvh,\psh),(\vvh,\qsh) \big)$ with $(\vvh,\qsh)=(-\wvI,0)$. In this sequel, first employing} the continuous inf-sup condition for any
  $\psh\in\Qsh$ there exists $\wv\in\Vv$ such that
  \begin{align}
    \int_\Omega \big(\nabla\cdot\wv\big)\psh\,d\Omega
    =    \theta \NORM{\psh}{\Omega}^2 \qquad\text{and}\,\,\qquad\NORM{\nabla\wv}{\Omega}
    \leq \frac{\theta}{\beta_0} \NORM{\psh}{\Omega}.
    \label{vstab5}
  \end{align}
  Let $\wvI\in\Vvh{}$ represent the virtual interpolant of
  $\wv\in\Vv$.
  Applying Lemma \ref{lemmaproj2}, we obtain
  \begin{align}
    \NORM{\nabla\wvI}{\Omega}
    \leq \frac{\CclemHat{}\theta}{\beta_0}\NORM{\psh}{\Omega},
    \label{vstab6}
  \end{align}
  where $\CclemHat{}:=1+\Cclem{}$.
  Applying $(\vvh,\qsh)=(-\wvI,0)$ in the definition of
  $\calA_{\PSPG{},\hh}$, gives
  \begin{align}
    \calA_{\PSPG{},\hh}\big(\psih;(\uvh,\psh),(-\wvI,0) \big)
    &=  -\asVh(\uvh, \wvI) + {b_h(\wvI,\psh)} - \csh(\psih; \uvh, \wvI) - \calLss{3,h}(\uvh, \wvI)\nonumber\\
    &=: -\ass{1}+ \ass{2} -\ass{3} -\ass{4}.
    \label{vstab7}
  \end{align}
 {The estimate of the above four terms are given as follows:} \newline
  \medskip
  \noindent
  $\bullet$  We use the {inequalities~\eqref{est1}} and \eqref{vstab6}:
  \begin{align}
    \ass{1}
    \leq \alpha^\ast\mu\NORM{\nabla\uvh}{\Omega}\,\NORM{\nabla\wvI}{\Omega}
    \leq \frac{\mu\alpha^\ast\CclemHat{}\theta}{\beta_0}\,\NORM{\nabla\uvh}{\Omega}\,\NORM{\psh}{\Omega}.
    \label{vstab8}
  \end{align}

  \medskip
  \noindent
  $\bullet$ Concerning $\ass{2}$, we proceed as follows
  \begin{align}
    \ass{2}
    &= \sum_{\P\in\Th} \int_{\P} \PizP{k-1} \big(\nabla\cdot\wvI\big)\PizP{k}\psh\dE \nonumber\\
    &= \sum_{\P\in\Th} \Big(
    \int_{\P} \nabla\cdot\wvI(\PizP{k-1}\psh-\psh)\dE +
    \int_{\P} \nabla\cdot(\wvI - \wv)\psh\dE +
    \int_{\P} \nabla\cdot\wv\psh\dE
    \Big).
    \nonumber
    %% -------------------------------------------------------------
    \intertext{Regarding the second integral, we use integration by
      parts and {the continuity property of $\wv\in\Vv$, $\wvI \in \Vv^h$, and
      $\psh \in Q_h$:}}
    %% -------------------------------------------------------------
    \ass{2}
    &= \sum_{\P\in\Th} \Big(
    \int_{\P} \nabla\cdot\wvI (\PizP{k-1}\psh -\psh)\dE -
    \int_{\P} (\wvI - \wv)\cdot\nabla\psh\dE +
    \int_{\P} \nabla\cdot\wv\psh\dE
    \Big)\nonumber\\
    %% -------------------------------------------------------------
    &= \sum_{\P\in\Th} \Big(
    \int_{\P} \nabla\cdot\wvI (\PizP{k-1}\psh -\psh)\dE +
    \int_{\P} (\wvI - \wv)\cdot(\nabla\psh - \PvzP{k-1}\nabla\psh)\dE +
    \int_{\P} (\wvI - \wv)\cdot\PvzP{k-1}\nabla\psh\dE \nonumber\\
    %% -------------------------------------------------------------
    & \qquad
    +\int_{\P} \nabla\cdot\wv\psh\dE \Big) \nonumber\\
    &=: \ass{21} + \ass{22} + \ass{23} + \ass{24}.
    \label{vstab9}
  \end{align}
  Applying the bound \eqref{vstab6} and Lemma \ref{lemmaproj1}, it
  holds
  \begin{align}
    \ass{21}
    &
    \leq \sum_{\P\in\Th} \NORM{\nabla\wvI}{\P} \NORM{(I - \PizP{k-1})\psh}{\P}
    =    \sum_{\P\in\Th} \NORM{\nabla\wvI}{\P} \NORM{(I - \PizP{k-1})\,(I - \PinP{k-1}) \psh}{\P} \nonumber \\ 
    & \leq \sum_{\P\in\Th} \hP \NORM{\nabla\wvI}{\P} \NORM{\nabla(I - \PinP{k-1})\psh}{\P} \nonumber \\ 
    & \leq \frac{\CclemHat{}\theta}{\beta_0} \max_{\P\in\Th} \frac{\hP}{\tau_{\P}^{1/2}}\NORM{\psh}{\Omega}\calLss{1,h}^{1/2}(\psh,\psh).
    \label{vstab10}
  \end{align}
  We use Lemma \ref{lemmaproj2}, the property of the projectors, and
  \eqref{vstab6}:
  \begin{align}
    \ass{22}
    &
    \leq \sum_{\P\in\Th} \NORM{\wvI - \wv}{\P}\,\NORM{\nabla\psh - \PvzP{k-1}\nabla\psh}{\P}
    \leq \sum_{\P\in\Th} \hP\NORM{\nabla\wv}{\P} \,\NORM{\nabla( I- \PvnP{k-1}) \psh}{\P} \nonumber \\
    &
    \leq \frac{\CclemHat{} \theta}{\beta_0}
    \max_{\P\in\Th} \frac{\hP}{\tau_{\P}^{1/2}}
    \NORM{\psh}{\Omega}
    \calLss{1,h}^{1/2}(\psh,\psh).
    \label{vstab11}
  \end{align}
  Following the estimation of $\ass{22}$, we infer
  \begin{align}
    \ass{23}
    \leq \frac{\CclemHat{}\theta}{\beta_0}
    \max_{\P\in\Th}\frac{\hP}{\tau_{\P}^{1/2}}
    \NORM{\psh}{\Omega}
    \calLss{1,h}^{1/2}(\psh,\psh).
    \label{vstab12} 
  \end{align}
  Concerning $\ass{24}$, we utilize \eqref{vstab5}. Combining the
  above bounds, we obtain
  \begin{align}
    \ass{2}
    \geq \theta\NORM{\psh}{\Omega}^2 -
    \frac{3\CclemHat{}\theta}{\beta_0}
    \max_{\P\in\Th}\frac{\hP}{\tau_{\P}^{1/2}}
    \NORM{\psh}{\Omega}
    \calLss{1,h}^{1/2}(\psh,\psh).
    \label{vstab13}
  \end{align}

  \medskip
  \noindent
  $\bullet$ Concerning $\ass{3}$, we use the Sobolev embedding
  theorem, \eqref{stabc1}{,} and \eqref{vstab6}:
  \begin{align}
    \ass{3}
    \leq \frac{\mu\alpha^\ast\CclemHat{}\theta}{2\beta_0}
    \NORM{\nabla\uvh}{\Omega}\,
    \NORM{\psh}{\Omega}.
    \label{vstab14}
  \end{align}

  \medskip
  \noindent
  $\bullet$ Employing the bounds \eqref{vem-a} and \eqref{vstab6}, it
  holds
  \begin{align}
    \ass{4}
    \leq \alpha^\ast\max_{\P\in\Th}\delta_E
    \NORM{\nabla\uvh}{\Omega}\,
    \NORM{\nabla\wvI}{\Omega}
    \leq \frac{\alpha^\ast\CclemHat{}\theta}{\beta_0}
    \max_{\P\in\Th} \delta_E
    \NORM{\nabla\uvh}{\Omega}\,
    \NORM{\psh}{\Omega}.
    \label{vstab15}
  \end{align}
  Combining the bounds \eqref{vstab8}, \eqref{vstab13},
  \eqref{vstab14}{,} and \eqref{vstab15}, and {exploiting the assumption
  \eqref{stabc2}, the Young inequality, and $ab \leq a^2 + \frac{b^2}{4}$ with $a,b>0$, we deduce that}
  \begin{align}
    & {\calA_{\PSPG{},\hh}\big(\psih;(\uvh,\psh),(-\wvI,0) \big)}
    \nonumber \\ & \quad
    %% -----------------------------------------------
    \geq
    - \Big(\frac{3\mu\alpha^\ast\CclemHat{}\theta}{2\beta_0}
    + \frac{\alpha^\ast\CclemHat{}\theta}{\beta_0}\Big)\,\NORM{\nabla\uvh}{\Omega}\,\NORM{\psh}{\Omega}
    + \theta\NORM{\psh}{\Omega}^2
    - \frac{3\CclemHat{}\lambda_2^\ast\theta}{\beta_0}\,\NORM{\psh}{\Omega}\calLss{1,h}^{1/2}(\psh,\psh)
    %% -----------------------------------------------
    \nonumber \\ & \quad
    \geq
    - \Big( \frac{3\mu\alpha^\ast\CclemHat{}}{4\beta_0} + \frac{\alpha^\ast\CclemHat{}}{\beta_0} \Big)\,\NORM{\nabla\uvh}{\Omega}^2
    + \Big( \theta^{-1}-\frac{3\mu\alpha^\ast\CclemHat{}}{4\beta_0}
    - \frac{\alpha^\ast\CclemHat{}}{4\beta_0}
    - \frac{3\CclemHat{}\lambda_2^\ast }{4\beta_0} \Big)\Ys^2 
    %% -----------------------------------------------
    \nonumber \\ & \quad\,\,\phantom{\geq}
    - \frac{3\CclemHat{}\lambda_2^\ast}{\beta_0} \calLss{1,h}(\psh,\psh)
    %% -----------------------------------------------
    \nonumber \\ & \quad
    \geq
    - \frac{\big(4\alpha^\ast+3\alpha^\ast\mu+12\lambda_2^\ast)\CclemHat{}}{4\beta_0}\,\normiii{\uvh}^2_\ast
    + \frac{(3\mu\alpha^\ast+\alpha^\ast+3\lambda_2^\ast)\CclemHat{}}{4\beta_0}\Ys^2,
    \nonumber 
  \end{align}
  where the last line is obtained using the definition of $\theta$.

  \medskip\noindent
  \textbf{Step 3.} {This step focuses on the construction of at least one $(\vvh,\qsh) \in \Vvh{} \times \Qsh$ for any given $(\uvh, \psh) \in \Vvh{}\times \Qsh$.}  To this end, we now choose $\Theta>0$ be an arbitrary constant{,} and set
  $(\vvh,\qsh)=(\uvh-\Theta\wvI,\psh)$.
  Under these settings, we obtain the following
   \begin{align}
     &\calA_{\PSPG{},\hh}\big(\psih;(\uvh,\psh),(\vvh,\qsh)\big)
     =    \calA_{\PSPG{},\hh}\big(\psih;(\uvh,\psh),(\uvh-\Theta\wvI,\psh)\big)
     %% --------------------------------------------------------------------
     \nonumber \\[0.5em] & \qquad
     \geq \Big(
     \frac{\alpha_\ast\mu}{8} -
     \frac{\Theta\alpha^\ast(4 + 3\mu + 12\lambda^\ast_2)\CclemHat{}}{4\beta_0}\Big)\,\normiii{ \uvh}^2_\ast
     +  \frac{\Theta(3\mu\alpha^\ast + \alpha^\ast + 3\lambda_2^\ast)\CclemHat{}}{4\beta_0} \Ys^2
     %% --------------------------------------------------------------------
     \nonumber \\ & \qquad
     \geq
     \min\Big\{\frac{\alpha_\ast\mu}{16},\frac{\Theta}{2}\Big\}\,\normiii{(\uvh,\psh)}^2,
     \label{vstab0}
   \end{align}
   where we have invoked 
   $\Theta={\mu\beta_0}\slash{4(4+3\mu+12\lambda_2^\ast)\CclemHat{}}$ in the last line.
   
   \medskip\noindent
   \textbf{Step 4.} Following \cite[Lemma 10,
     cf. (87)]{mishra2025equal}, we can easily show that there exist a
   constant $\alpha_0 >2${,} independent of the mesh size such that{,}
   $\normiii{(\vvh,\qsh)}^2\leq\alpha_0\normiii{(\uvh,\psh)}^2$.
   Combining this with \eqref{vstab0}, we obtain the result
   \eqref{d1well1} with
   $\beta_\ast=\frac{1}{\sqrt{\alpha}_0}\min\Big\{\frac{\alpha_\ast\mu}{16},\frac{\Theta}{2}\Big\}$.
   Thus, Problem~\eqref{dsvem1} has a unique solution. Moreover, it also
   satisfies
   \begin{align}
     \beta_\ast\normiii{(\uvh,\psh)}\,\normiii{(\vvh,\qsh)} 
     &
     \leq \Css{P}\Big( \NORM{\fv}{\Omega}
     + \East\NORM{\gs}{\Omega}
     + \Css{P}\East\kappa^\ast\NORM{\nabla\psih}{\Omega}\Big)\,\NORM{\nabla\vvh}{\Omega}\, \nonumber
     %% ---------------------------------------------------------------------------------
     \\ \nonumber &
     \phantom{\leq}\quad
     + \sum_{\P\in\Th}\tau_{\P}^{1/2}\Big( \NORM{\fv}{\P}
     + \East \NORM{g}{\P}
     + \East \kappa^\ast\NORM{\psih}{\P} \Big),\normiii{(\vvh,\qsh)}_E
     %% ---------------------------------------------------------------------------------
     \\  &
     \leq \big( \Css{P} + \frac{\abs{\Omega}^{1/2}}{4\Css{inv}}\big)
     \Big( \NORM{\fv}{\Omega}
     + \East\NORM{\gs}{\Omega}
     + \Css{P}\East\kappa^\ast\NORM{\nabla\psih}{\Omega}\Big)\,\normiii{(\vvh,\qsh)}. \label{uq2}
   \end{align}
   Thus, the result \eqref{dstabv} {follows from the bounds~\eqref{uq2} and~\eqref{stabc1}.}
\end{proof}

%% \MGT{
\begin{remark}\label{rem:coercivity}
  The coercivity estimate \eqref{vstab4} in the proof of
  Theorem~\ref{dwell1} relies on the identity
  $\min\big\{\frac{\mu\alpha_\ast}{2},1\big\} =
  \frac{\mu\alpha_\ast}{2}$, which holds under the condition
  \begin{align}
    \mu\,\alpha_\ast \leq 2.
    \label{coer_cond}
  \end{align}
  Since $\alpha_\ast = \min\{1,\lambda_{1\ast}\} \leq 1$ by
  definition, condition~\eqref{coer_cond} is automatically satisfied
  whenever $\mu \leq 2$.
  For the parameter regimes that we will consider in the numerical
  experiments of Section~\ref{sec-6}, where $\mu = 1$ (Examples~1
  and~2) and $\mu = 0.1$ (Example~3), this condition holds with a
  comfortable margin.
  We note that even when $\mu > 2$, the proof can be adapted by
  retaining the minimum in the coercivity constant, which leads to
  $\min\big\{\frac{\mu\alpha_\ast}{2},1\big\}$ replacing
  $\frac{\mu\alpha_\ast}{2}$ in~\eqref{vstab4} and all subsequent
  estimates that depend on it.
  A thourough discussion about this topic is also reported in
  Section~\ref{sec:smallness}.
\end{remark}
%% }

Hereafter, we discuss the existence and uniqueness of the discrete
solution to the potential transport equation \eqref{dsvem2}.
Following the continuous case Lemma \ref{wellp}, the nonlinear form
$\mathcal{B}_h(\uvh;\cdot,\cdot)$ satisfies the strongly monotone
property (using the strongly monotone property of $\kappa(\cdot)$) and
Lipschitz continuity.
Here, we use the Brouwer fixed-point theorem to show the
well-posedness of the problem \eqref{dsvem2}.

\begin{theorem}\label{dwell2}
  For any fixed $\uvh\in\Vvh{}$, the discrete problem
  \eqref{dsvem2} has a unique solution $\psih \in \mathlarger
  \Phi_{h}$, satisfying
  \begin{align}
    \NORM{\nabla\psih}{\Omega}
    \leq \frac{\Css{P}}{\epsilon\gamma_\ast}\,\NORM{\gs}{\Omega}.
    \label{dstabp}
  \end{align}
\end{theorem}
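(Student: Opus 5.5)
The plan is to treat \eqref{dsvem2} as a finite-dimensional nonlinear problem on $\mathlarger\Phi_{0,\hh}$ and to combine strong monotonicity, Lipschitz continuity and a Brouwer-type argument. Fix $\uvh\in\Vvh{}$. We consider the map $\mathcal{R}:\mathlarger\Phi_{0,\hh}\to\mathlarger\Phi_{0,\hh}$ defined via the Riesz representation in the $\HONE$-seminorm inner product:
\begin{align*}
(\nabla\mathcal{R}(\phi_h),\nabla\xi_h)_\Omega := \calBh(\uvh;\phi_h,\xi_h)-(\gsh,\xi_h)
\qquad \text{for all}\,\,\xi_h\in\mathlarger\Phi_{0,\hh}.
\end{align*}
Solving \eqref{dsvem2} is then equivalent to finding a zero of $\mathcal{R}$.

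\textbf{Continuity.} First show $\mathcal{R}$ is continuous, indeed Lipschitz. The form $\asph$ is bounded by \eqref{est3}. The skew term $\cs^{skew,\P}_{\ps,\hh}$ is bilinear in $(\phi_h,\xi_h)$ for fixed $\uvh$, and it is bounded via H\"older, Remark~\ref{qstab} and $C_{1\hookrightarrow4}$. For $\dsPh$, use the Lipschitz bound in \textbf{(A0)}(ii) together with $L^2$-stability of $\PizP{k}$ and Poincar\'e.

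\textbf{Coercivity.} Test with $\xi_h=\phi_h$. The skew term vanishes identically by construction, with no divergence-free condition needed, which is exactly why the skew form was introduced. Since $\kappa(0)=0$ and $\kappa$ is monotone, $\kappa(t)t\ge\kappa_\ast t^2\ge0$, so $\dsPh(\phi_h,\phi_h)\ge0$. With \eqref{est4} this gives
\begin{align*}
(\nabla\mathcal{R}(\phi_h),\nabla\phi_h)_\Omega
\geq \gamma_\ast\epsilon\NORM{\nabla\phi_h}{\Omega}^2-\Css{P}\NORM{\gs}{\Omega}\NORM{\nabla\phi_h}{\Omega}.
\end{align*}
Here we also use $\NORM{\PizP{k}\phi_h}{\Omega}\le\NORM{\phi_h}{\Omega}\le \Css{P}\NORM{\nabla\phi_h}{\Omega}$. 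The right side is nonnegative on the sphere of radius $r=\Css{P}\NORM{\gs}{\Omega}/(\epsilon\gamma_\ast)$. The standard corollary of Brouwer's theorem (acute-angle lemma) then yields a zero $\psih$ in the closed ball of that radius, which is precisely \eqref{dstabp}.

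\textbf{Uniqueness.} Take two solutions, subtract, and test with their difference. The skew terms again cancel because they are bilinear and skew. For the $\kappa$ terms we need $\int_\P(\kappa(\PizP{k}\phi_h)-\kappa(\PizP{k}\xi_h))\PizP{k}(\phi_h-\xi_h)\ge\kappa_\ast\NORM{\PizP{k}(\phi_h-\xi_h)}{\P}^2\ge0$, which holds because $\kappa$ is increasing. This gives $\gamma_\ast\epsilon\NORM{\nabla(\phi_h-\xi_h)}{\Omega}^2\le0$, the discrete analogue of \eqref{cort}.

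\textbf{Main obstacle.} The hardest point is that the solution must lie in $\mathlarger\Phi_h$, i.e.\ satisfy the box constraint $\Lambda_\ast\le\psih\le\Lambda^\ast$. The monotonicity and Lipschitz bounds of $\kappa$ are only assumed on $[\Lambda_\ast,\Lambda^\ast]$, and $\PizP{k}\phi_h$ need not stay in that range. I would handle this by first replacing $\kappa$ with its extension outside $[\Lambda_\ast,\Lambda^\ast]$ that is linear with slope between $\kappa_\ast$ and $\kappa^\ast$, so that all the properties used above hold globally. The existence argument then runs on all of $\mathlarger\Phi_{0,\hh}$. The solution is identified with an element of $\mathlarger\Phi_h$ by invoking \textbf{(A0)}(i), the standing uniform boundedness assumption on the potential, exactly as in the continuous Theorem~\ref{wellp}, rather than by proving a discrete maximum principle, which is unavailable for virtual elements.
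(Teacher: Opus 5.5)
Your core argument follows the paper's route. The paper invokes Brouwer's theorem on the finite-dimensional space (adapting \cite[Theorem 5.5]{vem28m}). It then obtains \eqref{dstabp} by testing \eqref{dsvem2} with $\psi_h$, dropping the skew and $\kappa$ terms, and using $\|\Pi^{0,E}_k\psi_h\|_E\le\|\psi_h\|_E$ together with Poincar\'e; this is exactly your coercivity estimate. Your Riesz/acute-angle formulation, the remark that the skew form vanishes on the diagonal without any divergence-free condition, and your uniqueness argument (the discrete analogue of \eqref{cort}) make explicit what the paper leaves to the reference.

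The step that does not hold up is your final paragraph.
\begin{itemize}
\item Assumption \textbf{(A0)}$(i)$ is a hypothesis on the exact potential $\psi$. It says nothing about $\psi_h$, and applying it to $\psi_h$ amounts to assuming the conclusion.
\item The linear extension of $\kappa$ creates a new difficulty rather than removing one. The zero you construct solves the modified problem. You can only conclude that it solves \eqref{dsvem2} with the true $\kappa=\alpha_0\sinh(\alpha_1\,\cdot)$ if $\Pi^{0,E}_k\psi_h$ takes values in $[\Lambda_\ast,\Lambda^\ast]$ on every element. That does not follow even from $\psi_h\in\Phi_h$, since $L^2$ projections onto $\mathbb{P}_k(E)$, $k\ge1$, do not preserve pointwise bounds (as you observe yourself).
\item The extension is also unnecessary. $\sinh$ is continuous and globally strongly monotone: $(\kappa(a)-\kappa(b))(a-b)\ge\alpha_0\alpha_1(a-b)^2$ for all real $a,b$, because $\cosh\ge1$. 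In finite dimensions the acute-angle lemma needs only continuity, not a global Lipschitz bound.
\end{itemize}
Without the extension, your argument proves existence, uniqueness and \eqref{dstabp} for the problem posed on the linear space $\Phi_{0,h}$. That is also all the paper's proof delivers, since the paper is silent on the box constraint. Placing $\psi_h$ in $\Phi_h$ would require a discrete maximum principle, which, as you note, is not available here. You should state this limitation explicitly rather than appeal to \textbf{(A0)}$(i)$.
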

\begin{proof}
  The proof of Theorem \ref{dwell2} follows from the Brouwer
  fixed-point theorem on the finite-dimensional spaces with some
  modification in the proof of \cite[Theorem 5.5]{vem28m}.
  In addition, it also holds
  \begin{align*}
    {\epsilon\gamma_\ast\NORM{\nabla\psih}{\Omega}^2}
    \leq \sum_{\P\in\Th}\NORM{\gs}{\P}\,\NORM{\psih}{\P}
    \leq \Css{P}\NORM{\gs}{\Omega}\,\NORM{\nabla\psih}{\Omega}.
  \end{align*}
  Thus, the stability estimate \eqref{dstabp} readily follows from the
  above analysis.
\end{proof}

%%%%%%%%%%%%%%%%%%%%%%%%%%%%%%%%%%%%%%%%%%%%%%%%%%%%%%%%%%%%%%%%%%%%%%%%%%%%%%%%%%%%%%%%%
%%%%%%%%%%%%%%%%%%%%%%%%%%%%%%%%%%%%%%%%%%%%%%%%%%%%%%%%%%%%%%%%%%%%%%%%%%%%%%%%%%%%%%%%%

\subsection{Well-posedness of the stabilized virtual element problem}
We are now in a position to demonstrate the well-posedness of the
stabilized problem \eqref{svem}.
To do this, we adopt the fixed-point approach utilized in the
continuous case to reformulate the problem \eqref{svariation} into an
equivalent fixed-point problem.
In view of Theorem \ref{dwell1}, we introduce a well-defined discrete
operator
$\mathcal{S}^h_{flow}:{\mathlarger \Phi_h} \rightarrow\Vvh{}\times\Qsh$
such that
\begin{align}
  \psih\rightarrow\mathcal{S}^h_{flow}
  := (\mathcal{S}^h_{1,flow}(\psih),\mathcal{S}^h_{2,flow}(\psih))
  =: (\uvh,\psh)
  \qquad \text{for~all~}\,\,\psih\in {\mathlarger \Phi_h},
\end{align}
where $(\uvh,\psh)\in\Vvh{}\times\Qsh$ is the unique solution of
\eqref{dsvem1} with given $\psih\in{\mathlarger \Phi_h}$ satisfying the
assumption of Theorem \ref{dwell1}.
Following Theorem \ref{dwell2}, we introduce a well-defined map
$\mathcal{N}^h_{elec}:\Vvh{}\rightarrow{\mathlarger \Phi_h}$ such that
\begin{align}
  \uvh\rightarrow\mathcal{N}^h_{elec}(\uvh) = \psih
  \qquad\text{for~all}\,\,\uvh\in\Vvh{},
\end{align}
where $\psih\in{\mathlarger \Phi_h}$ is the unique solution of
\eqref{dsvem2} with given $\uvh\in\Vvh{}$.
Mimicking the continuous case, we introduce an operator
$\calTh:{\mathlarger \Phi_h}\rightarrow{\mathlarger \Phi_h}$ such
that
\begin{align}
  \calTh(\psih)
  = \mathcal{N}^h(\mathcal{S}^h_{1,flow}(\psih))
  \qquad\text{for~all}\,\,\psih\in{\mathlarger \Phi_h}. 	
\end{align}
Finally, we introduce the equivalent discrete fixed-point problem as
follows:
\emph{Find {$\mathbf{\psi}_h\in\mathlarger \Phi_h$} such that
$\calTh(\psih)=\psih$}.
Therefore, seeking the fixed point of $\calTh$ will be
equivalent to solving the stabilized problem \eqref{svem}.
\begin{lemma} \label{set2}
  We define
  $\widehat{\mathlarger\Phi}_h:=\big\{\phih\in{\mathlarger \Phi_h}\,\,\text{such~that}\,\,\NORM{\nabla\phih}{\Omega}\leq\rs\big\}$
  with an arbitrary constant $r$.
  Let the data of the problem be such that
  \begin{align}
    \frac{\Css{P}}{\epsilon\gamma_\ast}\NORM{\gs}{\Omega}
    \leq \frac{\alpha_\ast\mu}{2\East\Css{1\hookrightarrow4}^2}
    \leq \rs.
  \end{align}
  Then, $\calTh(\widehat{\mathlarger\Phi}_{\hh}) \subset
  \widehat{\mathlarger\Phi}_{\hh}$.
\end{lemma}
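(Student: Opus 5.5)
The plan mirrors the proof of Lemma~\ref{selfc} in the continuous setting. We take an arbitrary $\phih\in\widehat{\mathlarger\Phi}_h$ and must show that $\calTh(\phih)\in\widehat{\mathlarger\Phi}_h$. The map $\calTh$ is the composition of $\mathcal{S}^h_{1,flow}$ with $\mathcal{N}^h_{elec}$, so the first step is to check that $\mathcal{S}^h_{flow}(\phih)$ is well defined.

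\textbf{Step 1: the flow solve.} Theorem~\ref{dwell1} requires the smallness condition \eqref{stabc1} on $\NORM{\nabla\phih}{\Omega}$, which membership in $\widehat{\mathlarger\Phi}_h$ alone does not give. The point needing most care is that $\rs$ is only assumed to satisfy $\rs\ge \frac{\alpha_\ast\mu}{2\East\Css{1\hookrightarrow4}^2}$. I would handle this in one of two ways.
\begin{itemize}
\item Read the statement as in the continuous case, i.e.\ take $\rs=\frac{\alpha_\ast\mu}{2\East\Css{1\hookrightarrow4}^2}$ as the working radius. Then every $\phih\in\widehat{\mathlarger\Phi}_h$ satisfies \eqref{stabc1}.
\item Alternatively, note that \eqref{stabc1} is used in Theorem~\ref{dwell1} only to control $\csh$. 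In that case I would state explicitly that $\mathcal{S}^h_{flow}$ is applied on the ball where \eqref{stabc1} holds, and that this is the region in which $\calTh$ is considered.
\end{itemize}
Either way, with \eqref{stabc2} assumed for $\tau_{\P},\delta_{\P}$, Theorem~\ref{dwell1} yields a unique $(\uvh,\psh)=\mathcal{S}^h_{flow}(\phih)\in\Vvh{}\times\Qsh$.

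\textbf{Step 2: the electrostatic solve.} We apply Theorem~\ref{dwell2} with $\uvh$ fixed. It gives a unique $\psih=\mathcal{N}^h_{elec}(\uvh)=\calTh(\phih)\in\mathlarger\Phi_h$, together with the bound \eqref{dstabp}:
\begin{align*}
  \NORM{\nabla\calTh(\phih)}{\Omega}
  \le \frac{\Css{P}}{\epsilon\gamma_\ast}\NORM{\gs}{\Omega}.
\end{align*}
This estimate is independent of $\uvh$, because the skew-symmetric convective term $\cs^{skew}_{p,h}$ vanishes when tested with $\psih$ itself. This is exactly why no bound on the velocity enters.

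\textbf{Step 3: the chain of inequalities.} Combining Step 2 with the data assumption of the lemma gives
\begin{align*}
  \NORM{\nabla\calTh(\phih)}{\Omega}
  \le \frac{\Css{P}}{\epsilon\gamma_\ast}\NORM{\gs}{\Omega}
  \le \frac{\alpha_\ast\mu}{2\East\Css{1\hookrightarrow4}^2}
  \le \rs .
\end{align*}
Hence $\calTh(\phih)\in\widehat{\mathlarger\Phi}_h$. The middle bound also shows that the image satisfies \eqref{stabc1}, so $\calTh$ can be iterated: the flow solve is again well defined at $\calTh(\phih)$. This is what the later fixed-point argument needs.

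The only genuine obstacle is the consistency issue in Step 1. Steps 2 and 3 are a direct substitution of \eqref{dstabp} into the hypothesis.
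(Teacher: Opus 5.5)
Your proposal is correct and takes the same route as the paper, whose proof reduces to the argument of Lemma~\ref{selfc}: the velocity-independent bound \eqref{dstabp} from Theorem~\ref{dwell2} is chained with the data assumption to give $\|\nabla\mathcal{T}^h(\phi_h)\|_{\Omega}\le r$. Your caveat about $\mathcal{S}^h_{flow}$ being well defined when $r$ exceeds the threshold in \eqref{stabc1} is a fair point the paper leaves implicit; it defines $\mathcal{S}^h_{flow}$ only for $\psi_h$ satisfying the hypotheses of Theorem~\ref{dwell1}, which amounts to your first resolution.
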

\begin{proof}
  The proof follows from Lemma~\ref{selfc}.
\end{proof}

\begin{lemma}\label{dswell}
  Under the assumption of Lemma \ref{set2}, for any $\psih,
  \psihHat\in\widehat{\mathlarger\Phi}_h$, the discrete
  operator $\calTh$ satisfies the following
  \begin{align}
    \snorm{\calTh(\psih) -\calTh(\psihHat)}{1,\Omega}
    \leq
    &
    \frac{\rs\East\Css{1\hookrightarrow4}^2}{\gamma_\ast \epsilon \beta_\ast}\,
    \bigg(
    \Css{P}^2\kappa^\ast
    + \frac{\Css{P}\kappa^\ast\abs{\Omega}^{1/2}}{4\Css{inv}}
    + \frac{5}{4}\Css{1\hookrightarrow4}^2\,\snorm{\calSh{1,\text{flow}}(\widehat{\psi})}{1,\Omega}
    \bigg)\times
    \nonumber\\
    &
    \NORM{\nabla(\psih-\psihHat)}{\Omega}.
    \label{maincntsp}
  \end{align}
\end{lemma}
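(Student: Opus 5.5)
The argument follows the continuous proof of Theorem~\ref{cnts}. I will split the estimate into a Lipschitz bound for $\mathcal{S}^h_{1,flow}$ and a Lipschitz bound for $\mathcal{N}^h_{elec}$, and then compose them. Fix $\psih,\psihHat\in\widehat{\mathlarger\Phi}_h$ and set $(\uvh,\psh)=\mathcal{S}^h_{flow}(\psih)$ and $(\uvhHat,\pshHat)=\mathcal{S}^h_{flow}(\psihHat)$. Both are well defined by Theorem~\ref{dwell1}, since Lemma~\ref{set2} keeps us in the regime \eqref{stabc1}.

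\textbf{Flow step.} Subtracting the two instances of \eqref{dsvem1} gives, for all $(\vvh,\qsh)$,
\begin{align*}
\calA_{\PSPG{},\hh}\big(\psih;(\uvh-\uvhHat,\psh-\pshHat),(\vvh,\qsh)\big)
= \Fs^{\psih}_{\PSPG{},\hh}(\vvh,\qsh)-\Fs^{\psihHat}_{\PSPG{},\hh}(\vvh,\qsh)
+ \calA_{\PSPG{},\hh}\big(\psihHat;(\uvhHat,\pshHat),(\vvh,\qsh)\big) - \calA_{\PSPG{},\hh}\big(\psih;(\uvhHat,\pshHat),(\vvh,\qsh)\big).
\end{align*}
The $\psi$-dependence of $\calA_{\PSPG{},\hh}$ enters only through $\csh$ and $\calLss{2,h}$, and both are linear in the $\nabla\psi$ slot. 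Hence the last difference equals
\begin{align*}
-\csh(\psih-\psihHat;\uvhHat,\vvh)-\sum_{\P}\int_{\P}\tau_{\P}\big(\PvzP{k}\uvhHat\cdot\PvzP{k-1}\nabla(\psih-\psihHat)\big)\Ev\cdot\PvzP{k-1}\nabla\qsh\,\dE .
\end{align*}
The load difference has two parts, a Galerkin part and a $\tau_{\P}$-weighted part, each involving $\kappa(\PizP{k}\psih)-\kappa(\PizP{k}\psihHat)$. I will apply the inf-sup condition \eqref{d1well1} to the difference and bound every right-hand term by $\normiii{(\vvh,\qsh)}$. The Galerkin load part is bounded via Lipschitz continuity of $\kappa$, $L^2$-stability of $\PizP{k}$ and Poincaré, which gives $\Css{P}^2\kappa^\ast\East$. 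The $\tau$-weighted load part gets the same treatment, but as in \eqref{uq2} I will use $\tau_{\P}^{1/2}\le \hP/(4\Css{inv})$ and sum, which yields $\Css{P}\kappa^\ast\abs{\Omega}^{1/2}\East/(4\Css{inv})$. The $\csh$ difference is bounded by Hölder, Remark~\ref{qstab} and the embedding $H^1\hookrightarrow L^4$, which gives $\East\Css{1\hookrightarrow4}^2\snorm{\uvhHat}{1,\Omega}$. The $\calLss{2,h}$ difference is bounded by $\tau_{\P}^{1/2}\,\|\PvzP{k-1}\nabla\qsh\|\le \normiii{(\vvh,\qsh)}$ and \eqref{bound1}, with $\tau_{\P}^{1/2}\hP^{-1}\Css{inv}\le 1/4$, which gives an extra $\tfrac14\East\Css{1\hookrightarrow4}^2\snorm{\uvhHat}{1,\Omega}$. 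This produces the factor $\tfrac54$. Together these give
\begin{align*}
\snorm{\uvh-\uvhHat}{1,\Omega}\le \frac{\East}{\beta_\ast}\Big(\Css{P}^2\kappa^\ast+\frac{\Css{P}\kappa^\ast\abs{\Omega}^{1/2}}{4\Css{inv}}+\frac54\Css{1\hookrightarrow4}^2\snorm{\uvhHat}{1,\Omega}\Big)\NORM{\nabla(\psih-\psihHat)}{\Omega}.
\end{align*}

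\textbf{Electrostatic step.} I mimic \eqref{s2}. For $\psih=\mathcal{N}^h_{elec}(\uvh)$ and $\psihHat=\mathcal{N}^h_{elec}(\uvhHat)$, I use strong monotonicity of $\calBh(\uvh;\cdot,\cdot)$. This follows from \eqref{est4}, from the skew form (which cancels on the diagonal), and from the monotonicity of $\kappa$ applied to $\PizP{k}$ arguments. It gives
\begin{align*}
\gamma_\ast\epsilon\NORM{\nabla(\psih-\psihHat)}{\Omega}^2\le \cs^{skew}_{p,h}\big(\uvhHat-\uvh;\psihHat,\psih-\psihHat\big).
\end{align*}
Bounding the right-hand side with Hölder, Remark~\ref{qstab} and the $L^4$ embedding yields
\begin{align*}
\snorm{\mathcal{N}^h_{elec}(\uvh)-\mathcal{N}^h_{elec}(\uvhHat)}{1,\Omega}\le \frac{\Css{1\hookrightarrow4}^2}{\gamma_\ast\epsilon}\,\snorm{\mathcal{N}^h_{elec}(\uvhHat)}{1,\Omega}\,\snorm{\uvh-\uvhHat}{1,\Omega}.
\end{align*}

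\textbf{Composition.} Composing the two estimates, and using $\snorm{\calTh(\psihHat)}{1,\Omega}\le \rs$ from Lemma~\ref{set2}, gives \eqref{maincntsp}.

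The main obstacle is the skew term in the electrostatic step. Its second half puts $\nabla(\psih-\psihHat)$ in the convected slot, so it is naturally bounded by $\snorm{\psihHat}{1,\Omega}\,\|\PizP{k}(\psih-\psihHat)\|_{0,4}$, and the $L^4$ norm of the projection must be controlled through Remark~\ref{qstab} and the embedding without losing powers of $\hh$. A second, lesser difficulty is making the $\tau$-weighted terms mesh independent; the upper bound in \eqref{stabc2} handles this.
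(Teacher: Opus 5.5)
Your proposal is correct and follows the paper's proof essentially step for step. You obtain a Lipschitz bound for $\calSh{flow}$ from the discrete inf-sup condition \eqref{d1well1}: the two load differences give $\Css{P}^2\kappa^\ast$ and $\Css{P}\kappa^\ast\abs{\Omega}^{1/2}/(4\Css{inv})$, and the $\csh$ and $\calLss{2,h}$ differences give $(1+\tfrac14)\Css{1\hookrightarrow4}^2\snorm{\uvhHat}{1,\Omega}$. You then obtain a Lipschitz bound for $\calNh{elec}$ from strong monotonicity of $\calBh$ and the skew form, and compose the two using $\snorm{\calTh(\psihHat)}{1,\Omega}\le\rs$, exactly as the paper does.

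There is one harmless slip in your closing remark. The bound $\snorm{\psihHat}{1,\Omega}\,\|\PizP{k}(\psih-\psihHat)\|_{0,4}$ belongs to the \emph{first} half of $\cs^{skew}_{p,h}$. The second half, which has $\nabla(\psih-\psihHat)$ in the gradient slot, is bounded by $\NORM{\nabla(\psih-\psihHat)}{\Omega}\,\|\PizP{k}\psihHat\|_{0,4}$. Both reduce to the same estimate via Remark~\ref{qstab} and the $L^4$ embedding, so the result is unaffected.
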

\begin{proof}
  The proof of Lemma \ref{dswell} follows from the following three
  steps.
  
  \medskip
  \noindent
  \textbf{Step 1.} {We first establish the Lipschitz continuity of the operator $\calSh{flow}$. To obtain this,} we choose $\psih$,
  $\psihHat\in\widehat{\mathlarger \Phi}_h$ such that
  \begin{itemize}
  \item $\calSh{flow}(\psih)= (\calSh{1,flow}(\psih),
  \calSh{2,flow}(\psih))=:(\uvh,\psh)$;
  %% ---
  \item $\calSh{flow}(\psihHat):=
  (\calSh{1,flow}(\psihHat),
  \calSh{2,flow}(\psihHat))=:(\widehat{\uv}_h,
  \widehat{p}_h)$.
  \end{itemize}
  %% %%
  Employing the discrete inf-sup inequality
  \eqref{d1well1} {and the problem~\eqref{dsvem1}}, it gives
     \begin{align}
       &\beta_\ast
       \normiii{\calSh{flow}(\psih) - \calSh{flow}(\psihHat)}  \,\normiii{(\vvh,\qsh)}
       \leq \calA_{\PSPG{},\hh}\big( \psih; (\uvh -  \uvhHat, \psh -  \pshHat), (\vvh, \qsh) \big)
       \nonumber \\
       & \qquad 
       =  F^{\psih}_{\text{PSPG},h}(\vvh,\qsh) - \calA_{\PSPG{},\hh}\big( \psih;( \uvhHat,\pshHat),(\vvh, \qsh) \big) \nonumber \\
       &\qquad
       = F^{\psih}_{\text{PSPG},h}(\vvh,\qsh)
       - F^{\psihHat}_{\text{PSPG},h}(\vvh,\qsh)
       + \calA_{\PSPG{},\hh}\big(\psihHat-\psih;( \uvhHat,\pshHat),(\vvh, \qsh) \big).
       \label{eq:step1a}
     \end{align}
     Expanding the terms on the right, we obtain
     %%
     %% Second chain (eq:step1b): Expanding the terms explicitly and applying initial estimates
     \begin{align}
       &F^{\psih}_{\text{PSPG},h}
       (\vvh,\qsh) - F^{\psihHat}_{\text{PSPG},h}(\vvh,\qsh)
       + \calA_{\PSPG{},\hh}\big(
       \psihHat - \psih; ( \uvhHat, \pshHat), (\vvh, \qsh)
       \big) \nonumber \\
       %% --------------------------------------------------------------
       &\hspace{2cm}
       = \sum_{\P\in\Th} \bigg(
       \int_{\P} \big(\kappa(\PizP{k}\psih) - \kappa(\PizP{k}\psihHat)\big)\,\Ev\cdot\PvzP{k}\vvh\dE
       \nonumber \\
       %% --------------------------------------------------------------
       &\hspace{3.5cm}
       + \int_{\P} \tau_{\P}
       \big(\kappa(\PizP{k}\psih) - \kappa(\PizP{k}\psihHat)\big)\,\Ev\cdot\PvzP{k-1}\nabla\qsh\dE
       \nonumber \\
       %% --------------------------------------------------------------
       &\hspace{3.5cm}
       + \int_{\P} \PvzP{k}\uvh\cdot\PvzP{k-1}\nabla(\psihHat - \psih)\,\Ev\cdot\PvzP{k}\vvh\dE
       \nonumber \\
       %% --------------------------------------------------------------       
       &\hspace{3.5cm}
       + \int_{\P} \tau_{\P}\big( \PvzP{k}\uvh\cdot\PvzP{k-1}\nabla(\psihHat-\psih) \big)\,\Ev\cdot\PvzP{k-1}\nabla\qsh\dE
       \bigg)
       \nonumber\\
       %% --------------------------------------------------------------
       &\hspace{2cm}
       \leq \sum_{\P\in\Th} \bigg(
       \East\kappa^\ast\NORM{\psih-\psihHat}{\P}\,\NORM{\vvh}{\P} +
       \East\kappa^\ast\tau_{\P}\NORM{\psih-\psihHat}{\P}\,\NORM{\PvzP{k-1}\nabla\qsh}{\P}
       \nonumber \\
       %% --------------------------------------------------------------
       &\hspace{3.5cm}
       + \East\NORM{\nabla(\psih-\psihHat)}{\P}\,\NORM{\uvh}{0,4,\P}\,\NORM{\vvh}{0,4,\P}
       \nonumber \\
       %% --------------------------------------------------------------
       &\hspace{3.5cm}
       + \East\tau_{\P}\NORM{\PvzP{k-1}\nabla(\psih-\psihHat)}{0,4,\P}\,\NORM{\uvh}{0,4,\P}\,\NORM{\PvzP{k-1}\nabla\qsh}{\P}
       \bigg).
       \label{eq:step1b}
     \end{align}
     Combining the estimates and using the bounds from Remark
     \ref{qstab}, inequality \eqref{bound1}, the Sobolev embedding
     theorem, and $\tau_{\P}\leq \frac{\hP^2}{16 \Css{inv}}$, we
     deduce
     %%
     %% Third chain (eq:step1c): Final bounds using the referenced results
     \begin{align}
       & \sum_{\P\in\Th} \bigg(
       \East \kappa^\ast\NORM{\psih-\psihHat}{\P} \NORM{\vvh}{\P} +
       \East \kappa^\ast\tau_{\P}\NORM{\psih-\psihHat}{\P}\,\NORM{\PvzP{k-1}\nabla\qsh}{\P}
       \nonumber \\
       %% -------------------------------------------------------------
       & \quad
       + \East\NORM{\nabla(\psih-\psihHat)}{\P}\,\NORM{\uvh}{0,4,\P}\,\NORM{\vvh}{0,4,\P}
       + \East\tau_{\P}\NORM{\PvzP{k-1}\nabla(\psih-\psihHat)}{0,4,\P}\,\NORM{\uvh}{0,4,\P}\,\NORM{\PvzP{k-1}\nabla\qsh}{\P}
       \bigg)
       \nonumber \\
       %% -------------------------------------------------------------
       & \quad\leq
       \East \bigg(
       \Css{P}^2\kappa^\ast +
       \frac{\Css{P}\kappa^\ast\abs{\Omega}^{1/2}}{4\Css{inv}} +
       \big(\Css{1\hookrightarrow4}^2 + \frac{1}{4}\Css{1\hookrightarrow4}^2 \big)
       \NORM{\nabla\uvhHat}{\Omega}
       \bigg)\,\NORM{\nabla(\psih-\psihHat)}{\Omega}\,\normiii{(\vvh,\qsh)}. \label{eq:step1c}
     \end{align}
   %% }
   %%
   Applying the definition of
   $\calSh{{flow}}$, it holds
   \begin{align}
     \normiii{\calSh{flow}(\psih) - \calSh{{flow}}(\psihHat)}
     \leq
     \frac{\East}{\beta_\ast} \bigg(
     \Css{P}^2\kappa^\ast
     + \frac{\Css{P}\kappa^\ast\abs{\Omega}^{1/2}}{4\Css{inv}}
     + \frac{5}{4}\Css{1\hookrightarrow4}^2\snorm{\calSh{1,{flow}}(\psiHat)}{1,\Omega}\bigg)\,\NORM{\nabla(\psih-\psihHat)}{\Omega}.
     \label{ds1}
   \end{align}

   \medskip
   \noindent
   \textbf{Step 2.} {In this step, we investigate the Lipschitz continuity of the operator $\calNh{{elec}}$.} Let $\uvh$, $\uvhHat\in\Vvh{}$ such that
   $\calNh{{elec}}(\uvh)=\psih$ and $\calNh{{elec}}(\uvhHat)=\psihHat$.
   We  exploit the definition use the stability property of $\asph$ {(cf. Lemma \ref{estimate}, the bound~\eqref{est3})}
   and the strongly monotone property of $\kappa(\cdot)$:
   \begin{align*}
     &\gamma_\ast\epsilon\snorm{\calNh{{elec}}(\uvh)- \calNh{{elec}}(\uvhHat)}{1,\Omega}^2
     =    \gamma_\ast \epsilon\NORM{\nabla(\psih-\psihHat)}{\Omega}^2\\
     &\qquad
     \leq \calBh(\uvh; \psih, \psih-\psihHat) - \calBh(\uvh; \psihHat, \psih-\psihHat)
     =    \calBh(\uvhHat; \psihHat, \psih-\psihHat) - \calBh(\uvh; \psihHat, \psih-\psihHat)\\
     &\qquad
     =    \cs_{p,h}^{skew}(\uvhHat-\uvh; \psihHat, \psih-\psihHat)
     \leq  \Css{1\hookrightarrow4}^2\NORM{\nabla\psihHat}{\Omega}\,\NORM{\nabla(\uvh-\uvhHat)}{\Omega}\,\NORM{\nabla(\psih-\psihHat)}{\Omega},
   \end{align*} 
   where we have exploited the H\"older inequality and the Sobolev embedding theorem in the last line.  Thus, we arrive at
   \begin{align}
     |\calNh{{elec}}(\uvh)- \calNh{{elec}}(\uvhHat)|_{1,\Omega} 
     &\leq \frac{C_{1\hookrightarrow4}^2} {\gamma_\ast \epsilon} |\calNh{\text{elec}}(\uvhHat)|_{1,\Omega} \NORM{\nabla(\uvh-\uvhHat)}{\Omega}. \label{ds2}
   \end{align}

   \medskip
   \noindent
   \textbf{Step 3.} {We are now in position to show the continuity of the map $\calTh$. To this end, we recall} \eqref{ds1} and \eqref{ds2}, and we proceed as follows
   \begin{align}
     \snorm{\calTh(\psih) -\calTh(\psihHat)}{1,\Omega}
     &=    \snorm{ \calNh{{elec}}(\calSh{1,{flow}}(\psih))- \calNh{{elec}}(\calSh{1,{flow}}(\psihHat)) }{1,\Omega} \nonumber\\
     &\leq \frac{\Css{1\hookrightarrow4}^2}{\gamma_\ast\epsilon} \snorm{\calTh(\psihHat)}{1,\Omega}\,\snorm{\calSh{1,{flow}}(\psih) - \calSh{1,{flow}}(\psihHat) }{1,\Omega}.\nonumber
     %&\leq \frac{r\East C_{1\hookrightarrow4}^2} {\gamma_\ast \epsilon \beta_\ast}   \Big[ \Css{P}^2  \kappa^\ast  + \frac{\Css{P} \kappa ^\ast |\Omega|^{1/2}}{4\Css{inv}} + \frac{5}{4} C_{1\hookrightarrow4}^2 | \calSh{1,\text{flow}}(\psiHat)|_{1,\Omega}\big] \NORM{\nabla(\psih-\psihHat)}{\Omega}. \nonumber
   \end{align}
   Thus, \eqref{maincntsp} can be easily obtained from
   \eqref{ds1}.
 \end{proof}
Following the continuous case, the discrete operator $\calTh$ has at
least one fixed point using the Brouwer fixed-point theorem
(cf. Lemmas \ref{set2} and \ref{dswell}).
Furthermore, the uniqueness of the fixed-point of $\calTh$ can be
demonstrated by introducing the necessary condition on the data of the
problem $(P)$. We summarize the existence and uniqueness of the
fixed-point problem in the following Proposition.
\begin{proposition} \label{uni2}
  Under the assumption of Lemma \ref{set2}, the operator
  $\calTh$ has at least one fixed-point.
  Thus, the stabilized virtual element problem \eqref{svem} has at
  least one solution $(\uvh,\psh,\psih)\in\Vvh{}\times\Qsh\times
  \mathlarger \phih$ with $\psi\in\widehat{\mathlarger \Phi}_h$ such
  that there holds
  \begin{align}
    \normiii{(\uvh, \psh)}
    \leq
    \frac{1}{\beta_\ast} \bigg( \Css{P} +
    \frac{\alpha_\ast \abs{\Omega}^{1/2}}{4\Css{inv}}
    \bigg)\,
    \bigg(
    \NORM{\fv}{\Omega}
    + \East \NORM{\gs}{\Omega}
    + \frac{\alpha_\ast\Css{P}\kappa^\ast\mu}{2\Css{1\hookrightarrow 4}^2}
    \bigg),
    \qquad
    \NORM{\nabla\psi}{\Omega}
    \leq \frac{\Css{P}}{\gamma_\ast\epsilon}\NORM{\gs}{\Omega}. \label{vvelv2}
  \end{align}
  Moreover, if the data of {Problem~\eqref{P}} satisfies the following:
  \begin{align}
    \frac{\rs\East\Css{1\hookrightarrow4}^2}{\gamma_\ast\epsilon\beta_\ast}\bigg(
      \Css{P}^2\kappa^\ast
      + \frac{\Css{P}\kappa^\ast\abs{\Omega}^{1/2}}{4\Css{inv}}
      + \frac{5\Css{1\hookrightarrow4}^2}{4\beta_\ast} \Big(
      \Css{P} + \frac{\alpha_\ast\abs{\Omega}^{1/2}}{4\Css{inv}}\Big)
      \Big(\NORM{\fv}{\Omega} + \East\NORM{\gs}{\Omega} + \frac{\alpha_\ast\Css{P}\kappa^\ast \mu}{2C^2_{1 \hookrightarrow 4}}\Big)
      \bigg) < 1.
  \end{align}
  Then, the operator $\calTh$ has a unique fixed-point.
  Equivalently, the problem \eqref{svem} has a unique solution.
\end{proposition}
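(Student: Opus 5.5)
The plan is to treat Proposition~\ref{uni2} as the discrete counterpart of Proposition~\ref{uni1} and argue in three stages: existence by Brouwer's fixed-point theorem, the a priori bounds \eqref{vvelv2} from the decoupled stability estimates, and uniqueness by a contraction argument built on Lemma~\ref{dswell}.

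\textbf{Existence.} The set $\widehat{\mathlarger\Phi}_h$ is a closed ball in the $\HONE$-seminorm, intersected with the pointwise box constraint $\Lambda_\ast\le\phi\le\Lambda^\ast$. It therefore sits in a finite-dimensional space. It is bounded, it is convex (an intersection of convex sets), and it is nonempty since it contains $0$. By Lemma~\ref{set2}, $\calTh$ maps $\widehat{\mathlarger\Phi}_h$ into itself. The key observation is that on this set the assumption \eqref{stabc1} of Theorem~\ref{dwell1} holds, because every $\phih\in\widehat{\mathlarger\Phi}_h$ has $\NORM{\nabla\phih}{\Omega}\le \alpha_\ast\mu/(2\East\Css{1\hookrightarrow4}^2)$. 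I read the choice of $\rs$ in Lemma~\ref{set2} as effectively requiring this, i.e.\ taking $\rs$ equal to that bound. Consequently $\calSh{flow}$ is well defined on $\widehat{\mathlarger\Phi}_h$. Lemma~\ref{dswell} then shows that $\calTh$ is Lipschitz there, hence continuous, and Brouwer's theorem yields a fixed point $\psih$.

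\textbf{A priori bounds.} Set $(\uvh,\psh)=\calSh{flow}(\psih)$, so that $(\uvh,\psh,\psih)$ solves \eqref{svem}.
\begin{itemize}
\item The bound on $\nabla\psih$ is exactly \eqref{dstabp} from Theorem~\ref{dwell2}.
\item The bound on $\normiii{(\uvh,\psh)}$ is \eqref{dstabv} from Theorem~\ref{dwell1}. There, the factor $\NORM{\nabla\psih}{\Omega}$ appearing in \eqref{uq2} is replaced using the radius of $\widehat{\mathlarger\Phi}_h$.
\end{itemize}

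\textbf{Uniqueness.} Let $\psih,\psihHat\in\widehat{\mathlarger\Phi}_h$ be two fixed points. Apply \eqref{maincntsp}, and bound $\snorm{\calSh{1,flow}(\psihHat)}{1,\Omega}\le\normiii{\calSh{flow}(\psihHat)}$ by \eqref{dstabv}. Substituting this into the Lipschitz constant gives exactly the left-hand side of the stated smallness condition. Under that condition $\calTh$ is a contraction on $\widehat{\mathlarger\Phi}_h$, so
\[
\NORM{\nabla(\psih-\psihHat)}{\Omega}\le L\,\NORM{\nabla(\psih-\psihHat)}{\Omega},\qquad L<1,
\]
which forces $\psih=\psihHat$. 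Uniqueness of $(\uvh,\psh)$ then follows from the uniqueness in Theorem~\ref{dwell1}.

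\textbf{Main obstacle.} The delicate point is the domain consistency of the fixed-point map. The self-map property of Lemma~\ref{set2} must be compatible with hypothesis \eqref{stabc1}, and $\widehat{\mathlarger\Phi}_h$ must be convex and closed even with the box constraint inherited from $\mathlarger\Phi$. The concern is whether $\calTh(\psih)$ actually lies in $\mathlarger\Phi_h$, i.e.\ satisfies the box constraint. I would handle this as the paper does: take the image to lie in $\mathlarger\Phi_h$ by construction, since Theorem~\ref{dwell2} provides $\psih\in\mathlarger\Phi_h$, and check only the gradient bound.
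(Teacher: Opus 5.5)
Your proposal is correct and follows the route the paper indicates. Existence comes from Brouwer's theorem on $\widehat{\mathlarger\Phi}_h$ via Lemmas~\ref{set2} and~\ref{dswell}, the bounds come from \eqref{dstabv} and \eqref{dstabp}, and uniqueness comes from a contraction obtained by inserting \eqref{dstabv} into \eqref{maincntsp}. Your observation that $r$ must equal $\alpha_\ast\mu/(2\East\Css{1\hookrightarrow4}^2)$, so that \eqref{stabc1} holds on the whole ball, is a sensible repair of the loosely stated Lemma~\ref{set2}.

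To pass from a unique fixed point in the ball to a unique solution of \eqref{svem}, add one remark. Any solution automatically has $\psih\in\widehat{\mathlarger\Phi}_h$, by \eqref{dstabp} and the data assumption of Lemma~\ref{set2}. Its velocity--pressure pair then equals $\calSh{flow}(\psih)$ by the uniqueness in Theorem~\ref{dwell1}, so no solution escapes your argument.
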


%%%%%%%%%%%%%%%%%%%%%%%%%%%%%%%%%%%%%%%%%%%%%%%%%%%%%%%%%%%%%%%%%%%%%%%%%%%%%%%%%%%%%%%%%%%%%%%%%%%%%%%
%                 Error estimates
%%%%%%%%%%%%%%%%%%%%%%%%%%%%%%%%%%%%%%%%%%%%%%%%%%%%%%%%%%%%%%%%%%%%%%%%%%%%%%%%%%%%%%%%%%%%%%%%%%%%%%%

\subsection{A priori error estimate}
\label{theo}
In this section, we derive a priori error estimates in the energy norm
with optimal convergence rates.
Throughout this section, the virtual interpolant of
$(\uv,\ps,\psi)\in\Vv\times\Qs\times\mathlarger\Phi$ is denoted by
$(\uvI,\psI,\psiI)\in\Vvh{}\times\Qsh\times{\mathlarger \Phi_h}$.
We first introduce the following regularity condition to derive the
error estimate:
%%
%Let us introduce the following notations
%\begin{align*}
%e^\uv&:=\uv- \uvh, & e^p&:=p-\psh, & e^\theta&:=\theta- \theta_h, &
%e^\uv_I &:= \uv-\uv_I, &  e^\psI &:= p-\psI,  &
% e^\theta_I&:=\theta - \theta_I,\\
% e^\uvh &:= \uvh-\uv_I,&  e^\psh &:= \psh - \psI,  & e^\theta_h&:= \theta_h - \theta_I.
%\end{align*}

\medskip
\noindent \textbf{(A2)} Regularity assumptions:
\begin{align*}
  &  \uv\in\Vv\cap\big[\HS{k+1}(\Th)\big]^2,
  && \ps\in\Qs\cap\HS{k}(\Th),  
  && \psi\in\mathlarger\Phi\cap\HS{k+1}(\Th),  
  && \fv\in[\HS{k-1}(\Th)]^2, \\ 
  &  {\gs}\in\HS{k-1}(\Th),  
  && \Ev\in[\WS{k-1,\infty}{}(\Th)]^2,  
  && \kappa\in\HS{k-1}(\Th).
\end{align*}
Hereafter, we use $\NORM{\nabla\uv}{\cdot}\leq\Css{u}$ {(cf. Theorem~\ref{wellv}, the bound
\eqref{velv})} in the subsequent analysis.
Furthermore, the constant $\Cs$ is independent of the mesh size and
also depends on the data of the problem \eqref{P}, the mesh regularity
constant, and the stability constants.
%%
% In addition, under the assumption \textbf{(A2)}, we have
%
% \begin{align}
%	\NORM{\nabla(I - \PinP{k}) \theta_I}{0,\P} &\leq \NORM{\nabla\theta_I - \PvzP{k-1} \nabla\theta_I}{0,\P} \nonumber \\
%	& \leq \NORM{\nabla\theta-\nabla\theta_I}{0,\P} + \NORM{\nabla\theta - \PvzP{k-1}  \nabla\theta}{0,\P} + \NORM{\PvzP{k-1}  (\nabla\theta - \nabla\theta_I)}{0,\P} \nonumber \\
%	& \leq h^k_E \|\theta\|_{k+1,\P}. \label{err0}
% \end{align}
%%
% Hereafter, we will first derive the following three lemmas to
% establish the convergence estimates in the energy norm, assuming
% that the assumptions \textbf{(A0)} and \textbf{(A1)} are valid.

%%%%%%%%%%%%%%%%%%%%%%%%%%%%%%%%%%%%%%%%%%%%%%%%%%%%%%%%%%%%%%%%%%%%%%%%%%%%%%%%%%%%%%%%%%%%%%%%%%%%%%%%%%%%%%
%      VELOCITY LEMMA: stabilized Stokes
%%%%%%%%%%%%%%%%%%%%%%%%%%%%%%%%%%%%%%%%%%%%%%%%%%%%%%%%%%%%%%%%%%%%%%%%%%%%%%%%%%%%%%%%%%%%%%%%%%%%%%%%%%%%%

\begin{lemma}
  \label{velocity}
  With the assumption \textbf{(A2)} and the hypothesis of Theorems
  \ref{dwell1} and \ref{wellv}, we consider
  $(\uv_I,\psI)\in\Vvh{}\times\Qsh$ denotes the virtual interpolant of
  $(\uv,p)\in\Vv\times\Qs$.
  Furthermore, for given $\psi\in{\mathlarger\Phi}$ and
  $\psih\in{\mathlarger\Phi}_h$, let $(\uv,\ps)\in\Vv\times\Qs$ and
  $(\uvh,\psh)\in\Vvh{}\times\Qsh$ be the solution of problems
  \eqref{variation1} and \eqref{dsvem1}, respectively.
  Then, for all $(\vvh,\qsh)\in\Vvh{}\times\Qsh$, the following holds
  \begin{align}
    \calA_{\PSPG{}}\big( \psi;(\uv,\ps),(\vvh,\qsh) \big) -
    &
    \calA_{\PSPG{},\hh}\big( \psih;(\uvI,\psI),(\vvh,\qsh) \big)
    \leq \Cs\hh^k \bigg(
    \NORM{\uv}{k+1,\Omega} +
    \NORM{\ps}{k,\Omega} +
    \NORM{\psi}{k+1,\Omega} \nonumber \\
    &
    + \max_{\P\in\Th} \NORM{\Ev}{k-1,\infty,\P}\,\NORM{\uv}{k+1,\Omega}\,\NORM{\psi}{k+1,\Omega}
    + \NORM{\psi}{2,\Omega}\,\NORM{\uv}{k+1,\Omega}\bigg)\,
    \normiii{(\vvh,\qsh)} \nonumber \\ 
    &\quad
    + \frac{ 5\East\Css{1\hookrightarrow 4}^2}{4}\,
    \NORM{\nabla\uv}{\Omega}\,\NORM{\nabla(\psiI-\psih)}{\Omega}\,\normiii{(\vvh,\qsh)}.
    \label{verror}
  \end{align}
\end{lemma}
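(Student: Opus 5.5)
The plan is to split the consistency difference $\calA_{\PSPG{}}(\psi;(\uv,\ps),(\vvh,\qsh))-\calA_{\PSPG{},\hh}(\psih;(\uvI,\psI),(\vvh,\qsh))$ element by element into its constituent forms and bound each piece separately. On each $\P\in\Th$ we write it as
\begin{align*}
  &\big[\asPV(\uv,\vvh)-\asPVh(\uvI,\vvh)\big]
  -\big[\bsP(\vvh,\ps)-\bsPh(\vvh,\psI)\big]
  +\big[\bsP(\uv,\qsh)-\bsPh(\uvI,\qsh)\big]
  +\big[\calLP{1}(\ps,\qsh)-\calLP{1,h}(\psI,\qsh)\big]\\
  &\quad+\big[\csP(\psi;\uv,\vvh)-\csPh(\psih;\uvI,\vvh)\big]
  +\big[\calLP{2}(\psi;\uv,\qsh)-\calLP{2,h}(\psih;\uvI,\qsh)\big]
  -\calLP{3,h}(\uvI,\vvh),
\end{align*}
where $\calLP{3}(\uv,\cdot)=0$ because $\nabla\cdot\uv=0$. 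The linear Stokes-type pieces ($\as$, $\bs$, $\calL_1$, $\calL_3$) are handled with the standard VEM consistency argument. We insert $\PvzP{k-1}\nabla\uv$ (respectively $\PizP{k-1}$, $\PizP{k}$), use the orthogonality of the $\LTWO$ projectors, and bound the stabilization terms $\SP_1,\SP_2$ via \eqref{vem-a}--\eqref{vem-b} applied to $(I-\PvnP{k})\uvI$ and $(I-\PinP{k-1})\psI$. Lemmas~\ref{lemmaproj1} and~\ref{lemmaproj2} then give $\Cs\hh^k(\NORM{\uv}{k+1,\Omega}+\NORM{\ps}{k,\Omega})$ times $\NORM{\nabla\vvh}{\Omega}$, $\NORM{\qsh}{\Omega}$, $\calLss{1,h}^{1/2}(\qsh,\qsh)$ or $\calLss{3,h}^{1/2}(\vvh,\vvh)$. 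The weights $\tau_\P\sim\hP^2$ and $\delta_\P\sim\hP$ absorb the lost powers of $\hP$, as in \cite{mishra2025equal}. For the term $\tau_\P(-\mu\Delta\uv+\mu\nabla\cdot\PvzP{k-1}\nabla\uvI)\cdot\nabla\qsh$ we use $\tau_\P^{1/2}\lesssim\hP$, the inverse estimate of Lemma~\ref{inverse} on the polynomial part, and the bound $\NORM{\Delta\uv-\nabla\cdot\PvzP{k-1}\nabla\uv}{\P}\lesssim\hP^{k-1}\snorm{\uv}{k+1,\P}$.

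The nonlinear convective term carries the coupling, so we split it by adding and subtracting intermediate quantities:
\begin{align*}
  &\int_\P(\uv\cdot\nabla\psi)\Ev\cdot\vvh-\int_\P(\PvzP{k}\uvI\cdot\PvzP{k-1}\nabla\psih)\Ev\cdot\PvzP{k}\vvh
  =\int_\P\big((\uv-\PvzP{k}\uvI)\cdot\nabla\psi\big)\Ev\cdot\vvh\\
  &\quad+\int_\P\big(\PvzP{k}\uvI\cdot(\nabla\psi-\PvzP{k-1}\nabla\psiI)\big)\Ev\cdot\vvh
  +\int_\P\big(\PvzP{k}\uvI\cdot\PvzP{k-1}\nabla(\psiI-\psih)\big)\Ev\cdot\vvh\\
  &\quad+\int_\P\big(\PvzP{k}\uvI\cdot\PvzP{k-1}\nabla\psih\big)\Ev\cdot(I-\PvzP{k})\vvh .
\end{align*}
The first two integrals are $\mathcal O(\hh^k)$ by H\"older ($L^4$--$L^2$--$L^4$), the embedding $\Css{1\hookrightarrow4}$, Remark~\ref{qstab} and the approximation lemmas. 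They produce the products $\NORM{\psi}{2,\Omega}\NORM{\uv}{k+1,\Omega}$ and $\NORM{\psi}{k+1,\Omega}$, where $\NORM{\nabla\psi}{0,4}\lesssim\NORM{\psi}{2}$. The last integral is moved onto $\Ev$ times the polynomial factor by orthogonality of $\PvzP{k}$. This yields $\hP^{k}\NORM{\Ev}{k-1,\infty,\P}\NORM{\uv}{k+1}\NORM{\psi}{k+1}$, after replacing $\psih$ by $\psi$ up to terms of the third type. The third integral is not small; it is the error term retained on the right of \eqref{verror}. Via $\NORM{\PvzP{k}\uvI}{0,4}\le\Cs\Css{1\hookrightarrow4}\NORM{\nabla\uv}{\Omega}$ (interpolant stability), it gives $\East\Css{1\hookrightarrow4}^2\NORM{\nabla\uv}{\Omega}\NORM{\nabla(\psiI-\psih)}{\Omega}\NORM{\nabla\vvh}{\Omega}$.

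The PSPG coupling term $\calL_2$ is treated with the same four-term splitting, with $\PvzP{k-1}\nabla\qsh$ in place of $\vvh$. Here the factor $\tau_\P\NORM{\PvzP{k-1}\nabla(\psiI-\psih)}{0,4,\P}$ is controlled by \eqref{bound1} and $\tau_\P\le\hP^2/(16\Css{inv}^2)$, which converts it into $\tfrac14\Css{1\hookrightarrow4}^2\NORM{\nabla\uv}{\Omega}\NORM{\nabla(\psiI-\psih)}{\Omega}\calLss{1,h}^{1/2}(\qsh,\qsh)$. Adding this to the momentum contribution produces the constant $\tfrac54$, exactly as in Step~1 of Lemma~\ref{dswell}. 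Summing over elements, applying discrete Cauchy--Schwarz and bounding every test factor by $\normiii{(\vvh,\qsh)}$ then completes the estimate. The main obstacle is this nonlinear part. It requires keeping $\NORM{\nabla(\psiI-\psih)}{\Omega}$ with an explicit, $\hh$-independent constant rather than hiding it in $\Cs$, which matters for the later fixed-point-type absorption. It also requires getting $\hP^k$ from the projection of $\Ev$ terms using only $\Ev\in\WS{k-1,\infty}{}$. If the direct orthogonality argument loses a power of $\hP$, I would first try inserting $\PizP{k-1}$ of the product $\Ev(\uv\cdot\nabla\psi)$ and using the $\WS{k-1,\infty}{}$ product rule.
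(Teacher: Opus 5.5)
Your overall architecture is the paper's. You use the same seven-term splitting into $\eta_1,\dots,\eta_7$ and standard VEM consistency for the linear pieces; the paper cites \cite[Lemma~7]{mishra2024unified} for $\eta_1+\eta_2+\eta_3$ and treats $\eta_5,\eta_7$ as you describe. You also use a four-term split of both convective contributions, and get $\frac54=1+\frac14$ from the momentum coupling term plus the PSPG one, via \eqref{bound1} and $\tau_\P\le\hP^2/(16\Css{inv}^2)$.

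The step that does not go through as written is your fourth integral
\[
T_4:=\int_\P\big(\PvzP{k}\uvI\cdot\PvzP{k-1}\nabla\psih\big)\Ev\cdot(I-\PvzP{k})\vvh\,\dE .
\]
It contains the discrete solution $\psih$, for which you only have an $\HONE$ bound. The orthogonality step would need a uniform bound on $\snorm{(\PvzP{k}\uvI\cdot\PvzP{k-1}\nabla\psih)\Ev}{k-1,\P}$, which you do not have. A direct H\"older bound, using $\NORM{(I-\PvzP{k})\vvh}{0,4,\P}\lesssim\hP^{1/2}\snorm{\vvh}{1,\P}$, gives only $\mathcal{O}(\hh^{1/2})$.

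Your remedy is to replace $\psih$ by $\psiI$ and then by $\psi$. This produces the extra piece $-\int_\P(\PvzP{k}\uvI\cdot\PvzP{k-1}\nabla(\psiI-\psih))\Ev\cdot(I-\PvzP{k})\vvh\,\dE$, a second copy of the coupling term. Estimated on its own, it adds another multiple of $\NORM{\nabla\uv}{\Omega}\NORM{\nabla(\psiI-\psih)}{\Omega}\normiii{(\vvh,\qsh)}$, with a generic constant or with $\Cs\hh^{1/2}$. So you do not reach the single coefficient $\frac54\East\Css{1\hookrightarrow4}^2$ claimed in \eqref{verror}, and that coefficient is exactly what condition \eqref{cgtcon} in Proposition~\ref{convergence} relies on.

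The repair is to add this piece to your third integral, which turns $\vvh$ there into $\PvzP{k}\vvh$. That reproduces the paper's ordering, and it is cleaner to start from it.

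The paper first charges the test-function projection error to the exact product: $\eta_{41}=\sum_\P\int_\P(\uv\cdot\nabla\psi)\Ev\cdot(\vvh-\PvzP{k}\vvh)\,\dE$. This is estimated by exactly your fallback: subtract $\PvzP{k-1}$ of $(\uv\cdot\nabla\psi)\Ev$, gaining $\hP^{k-1}$ there and $\hP$ from $\vvh-\PvzP{k}\vvh$. Only then does it peel off $\uv\to\PvzP{k}\uvI$, $\nabla\psi\to\PvzP{k-1}\nabla\psiI$ and $\psiI\to\psih$, all tested against $\PvzP{k}\vvh$ ($\eta_{42}$--$\eta_{44}$). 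Only regular quantities enter the orthogonality step, and $\NORM{\nabla(\psiI-\psih)}{\Omega}$ appears once.

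The same ordering is needed in $\calL_2$. The coupling piece must be tested against $\PvzP{k-1}\nabla\qsh$ (the paper's $\eta_{6,6}$), because only $\tau_\P\NORM{\PvzP{k-1}\nabla\qsh}{\P}^2\le\calLss{1,h}(\qsh,\qsh)$ holds with constant one. The remainder $(I-\PvzP{k-1})\nabla\qsh$ is paired with the exact product ($\eta_{6,3}$) and controlled through $\SP_2$. Your $\frac14$ computation already implicitly assumes this.

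On the other hand, keep your treatment of the viscous part of $\calL_2$ via $\NORM{\Delta\uv-\nabla\cdot\PvzP{k-1}\nabla\uv}{\P}\lesssim\hP^{k-1}\snorm{\uv}{k+1,\P}$. As written, the identity \eqref{err9} drops the term $\tau_\P\mu\int_\P(\nabla\cdot\PvzP{k-1}\nabla\uv-\Delta\uv)\cdot\PvzP{k-1}\nabla\qsh\,\dE$, and your splitting accounts for it.
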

\begin{proof} Using the definition of $\mathcal{A}_{\PSPG{}}$ and $\calA_{\PSPG{},\hh}$, we have
  \begin{align}
    \calA_{\PSPG{}}\big(\psi;&(\uv,\ps),(\vvh,\qsh)\big) -
    \calA_{\PSPG{},\hh}\big(\psih;(\uvI,\psI),(\vvh,\qsh)\big)
    \nonumber \\
    &=
    \ass{V}(\uv,\vvh)   - \asVh(\uvI,\vvh) +
   { b_h(\vvh,\psI) - b(\vvh,\ps) +
    b(\uv,\qsh)   - b_h(\uvI,\qsh) \,+}
    \nonumber \\
    &\qquad
    {c(\psi;\uv,\vvh) - \csh(\psih;\uvI,\vvh)} +
    \calLss{1}(\ps,\qsh)   - \calLss{1,h}(\psI,\qsh)\,
    + \nonumber \\
    & \qquad
    \calLss{2}  \big(\psi;\uv,\qsh\big) -
    \calLss{2,h}\big(\psih;\uvI,\qsh\big) -
    \calLss{3,h}\big(\uvI,\vvh\big).
    \nonumber \\
    & =:
    \eta_{1}
    + \eta_{2}
    + \eta_{3}
    + \eta_{4}
    + \eta_{5}
    + \eta_{6}
    -\eta_{7}.
    \label{err05}
  \end{align}
  The estimation of the above involves the following steps:

  \medskip\noindent
  \textbf{Step 1.} {Estimate of $\eta_1, \eta_2$, and $\eta_3$.} Following \cite[Lemma~7]{mishra2024unified}, we
  obtain the following result
  \begin{align}
    \abs{\eta_1 + \eta_2 + \eta_3}
    &\leq  \abs{\ass{V}(\uv,\vvh) - \asVh(\uv_I, \vvh)}
    + {\abs{ b_h(\vvh,\psI) - b(\vvh,\ps) }
    + \abs{ b(\uv,\qsh)   - b_h(\uvI,\qsh) }}
    \nonumber \\
    &\leq \Cs\hh^k\big(
    \NORM{\uv}{k+1,\Omega} +
    \NORM{\ps}{k,\Omega}\big)\,\normiii{(\vvh,\qsh)}.
    \label{err1}
  \end{align}

  \medskip\noindent
  \textbf{Step 2.} {Estimate of $\eta_4$.} Adding and subtracting suitable terms, we infer
  \begin{align}
    \eta_4
    &= \sum_{\P\in\Th} \bigg(
    \int_{\P} \big(\uv\cdot\nabla\psi\big)\,\Ev\cdot\vvh\dE -
    \int_{\P} \big(\PvzP{k}\uvI\cdot\PvzP{k-1}\nabla\psih)\,\Ev\cdot\PvzP{k}\vvh\dE
    \bigg) \nonumber \\
    &= \sum_{\P\in\Th} \bigg(
    \int_{\P} \big(\uv\cdot\nabla\psi\big)\Ev\cdot\big(\vvh-\PvzP{k}\vvh\big)\dE +
    \int_{\P} \big((\uv-\PvzP{k}\uvI)\cdot\nabla\psi\big)\,\Ev\cdot\PvzP{k}\vvh\dE
    \nonumber \\
    & \qquad
    + \int_{\P} \PvzP{k}\uvI\cdot\big(\nabla\psi - \PvzP{k-1}\nabla\psiI\big)\,\Ev\cdot\PvzP{k}\vvh\dE
    + \int_{\P} \PvzP{k}\uvI\cdot\PvzP{k-1}\big(\nabla\psiI -  \nabla\psih\big)\,\Ev\cdot\PvzP{k}\vvh\dE
    \bigg)
    \nonumber \\
    &=: \eta_{41} + \eta_{42} +\eta_{43} + \eta_{44}.
    \label{err2}
  \end{align} 

  \medskip\noindent
  $\bullet$ We recall the orthogonality property of the projection
  operator, Lemma \ref{lemmaproj1}, and regularity assumption
  \textbf{(A2)}:
  \begin{align}
    \eta_{41}
    &= \sum_{\P\in\Th} \int_{\P} \Big(
    \big(\uv\cdot\nabla\psi\big)\Ev - \PvzP{k-1}
    \big(\uv\cdot\nabla\psi\Ev\big)
    \Big)\cdot\big(\vvh-\PvzP{k}\vvh\big)\dE
    \nonumber \\
    %% ------------------------------------------
    &\leq \Cs\sum_{\P\in\Th}
    \NORM{\Ev}{k-1,\infty,\P}\,\hP^{k}\,
    \snorm{\uv\cdot\nabla\psi}{k-1,\P}\,
    \NORM{\nabla\vvh}{\P}
    \nonumber \\
    %% ------------------------------------------
    &\leq \Cs\hh^k \max_{\P\in\Th}
    \NORM{\Ev}{k-1,\infty,\P}\,
    \snorm{\uv}{k-1,4,\Omega}\,
    \snorm{\nabla\psi}{k-1,4,\Omega}\,
    \NORM{\nabla\vvh}{\Omega}
    \nonumber \\
    %% ------------------------------------------
    &\leq \Cs\hh^k \max_{\P\in\Th}
    \NORM{\Ev}{k-1,\infty,\P}\,
    \NORM{\uv}{k+1,\Omega}\,
    \NORM{\psi}{k+1,\Omega}\,
    \normiii{( \vvh,\qsh)}.
    \label{err3}
  \end{align}

  \medskip\noindent
  $\bullet$ Concerning $\eta_{42}$, we proceed as follows
  \begin{align}
    \eta_{42}
    & \leq \Cs\sum_{\P\in\Th}
    \NORM{(\uv-\PvzP{k}\uvI)}{\P}\,\NORM{\nabla\psi}{0,4,\P}\,\NORM{\vvh}{0,4,\P}
    \nonumber \\
    %% --------------------------------------------------------------------------
    & \leq \sum_{\P\in\Th}\hP^{k+1}
    \NORM{\uv}{k+1,\P}\,\NORM{\nabla\psi}{0,4,\P}\,\NORM{\vvh}{0,4,\P}
    \nonumber \\
    %% --------------------------------------------------------------------------
    &\leq Ch^{k+1} \NORM{\uv}{k+1,\Omega}\,\NORM{\psi}{2,\Omega}\,\normiii{(\vvh,\qsh)}.
    \label{err4}
  \end{align}
  %Note that the last line is obtained using the boundedness of $\psi$ (cf. Theorem \ref{wellv}).  \newline

  \medskip\noindent
  $\bullet$ Applying Remark \ref{qstab}, the Sobolev embedding
  theorem, and Lemmas \ref{lemmaproj1} and \ref{lemmaproj2}, it gives
  \begin{align}
    \eta_{43}
    &\leq \sum_{\P\in\Th} \NORM{\uvI}{0,4,\P}\,\NORM{\nabla\psi-\PvzP{k-1}\nabla\psiI}{\P}\,\NORM{\vv}{0,4,\P}
    \nonumber \\
    %% --------------------------------------------------------------------------
    &\leq \Cs\hh^k \NORM{\uvI}{0,4,\Omega}\,\NORM{\psi}{k+1,\Omega}\,\NORM{\vv}{0,4,\Omega}
    \nonumber \\
    %% --------------------------------------------------------------------------
    &\leq \Cs\hh^k \NORM{\psi}{k+1,\Omega} \normiii{(\vv,q)}.
    \label{err5}
  \end{align} 

  \medskip\noindent
  $\bullet$ Using the stability property of $L^2$-projectors, the
  Sobolev embedding theorem and H\"older inequality, we obtain
  \begin{align}
    \eta_{44}
    &\leq \East \sum_{\P\in\Th} \NORM{\uvI}{0,4,\P}\,\NORM{\nabla\psiI-\nabla\psih}{\P}\,\NORM{\vvh}{0,4,\P}
    \nonumber \\
    %% ---------------------------------------------------------------------------------------------------
    &\leq \East \Css{1\hookrightarrow 4}^2
    \NORM{\nabla\uvI}{\Omega}\,
    \NORM{\nabla\psiI-\nabla\psih}{\Omega}\,
    \NORM{\nabla\vvh}{\Omega}
    \nonumber \\
    %% --------------------------------------------------------------------------
    &\leq \East \Css{1\hookrightarrow 4}^2
    \NORM{\nabla\uv }{\Omega}\,
    \NORM{\nabla\psiI-\nabla\psih}{\Omega}\,
    \normiii{(\vvh,\qsh)}.
    \label{err6}
  \end{align}
  Combining \eqref{err3}--\eqref{err6}, we have the following estimate
  \begin{align}
    \eta_4
    &\leq \Cs\hh^k\big(
    \max_{\P\in\Th}
    \NORM{\Ev}{k-1,\infty,\P}\,\NORM{\uv}{k+1,\Omega}\,\NORM{\psi}{k+1,\Omega} +
    \NORM{\uv}{k+1,\Omega}\,\NORM{\psi}{2,\Omega} +
    \NORM{\psi}{k+1,\Omega}\big)\,
    \normiii{( \vvh,\qsh)}
    \nonumber \\
    %% --------------------------------------------------------------------------
    & \qquad
    +\East\Css{1\hookrightarrow 4}^2
    \NORM{\nabla\uv}{\Omega}\,
    \NORM{\nabla(\psiI-\psih)}{\Omega}\,
    \normiii{( \vvh,\qsh)}.
    \label{err7}
  \end{align}

  \medskip\noindent
  \textbf{Step 3.} {Estimate of $\eta_5$.} We apply the orthogonality of the projectors, the
  bound \eqref{vem-b} and Lemma \ref{lemmaproj1}:
  \begin{align}
    \eta_5
    &= \sum_{\P\in\Th} \bigg(
    \int_{\P} \tau_{\P}\nabla\ps\cdot\nabla\qsh\dE -
    \int_{\P} \tau_{\P}\PvzP{k-1}\nabla\psI\cdot\PvzP{k-1}\nabla\qsh\dE
    - \tau_{\P} \SP_2\big( \psI - \PinP{k-1}\psI, \qsh - \PinP{k-1}\qsh \big)
    \bigg)
    \nonumber \\
    %% --------------------------------------------------------------------------
    &= \sum_{\P\in\Th} \bigg(
    \int_{\P} \tau_{\P}\big(\nabla\psI - \PvzP{k-1}\nabla\psI\big)\cdot\nabla\qsh\dE
    -\tau_{\P} \SP_2\big(\psI-\PinP{k-1}\psI, \qsh-\PinP{k-1}\qsh\big)
    \bigg)
    \nonumber \\
    & \leq \Cs\hh^k \NORM{\ps}{k,\Omega}\,\normiii{(\vvh,\qsh)}.
    \label{err8}
  \end{align}

  \medskip\noindent
  \textbf{Step 4.}  {Estimate of $\eta_6$.} Concerning $\eta_6$, we proceed as follows
  %% \MGT{
    \begin{align}
      \eta_6
      &= \sum_{\P\in\Th} \tau_{\P}\bigg(
      \int_{\P} \big( -\mu\Delta\uv + (\uv\cdot\nabla\psi)\Ev\big)\cdot\nabla\qsh\dE
      \nonumber \\
      %% --------------------------------------------------------------------------
      & \qquad -
      \int_{\P} \big(
      - \mu\nabla\cdot\PvzP{k-1}\nabla\uvI
      + (\PvzP{k}\uvI\cdot\PvzP{k-1}\nabla\psih)\Ev
      \big)\cdot\PvzP{k-1}\nabla\qsh\dE
      \bigg)
      \nonumber \\
      %% --------------------------------------------------------------------------
      &= \sum_{\P\in\Th} \tau_{\P}\bigg(
      \int_{\P} \mu\nabla\cdot\PvzP{k-1}\nabla(\uvI-\uv)\cdot\PvzP{k-1}\nabla\qsh\dE +
      \int_{\P} \mu\Delta\uv\cdot\big(\PvzP{k-1}\nabla\qsh-\nabla\qsh\big)\dE
      \nonumber \\
      & \qquad
      + \int_{\P} \big(\uv\cdot\nabla\psi\big)\Ev\cdot\big(\nabla\qsh-\PvzP{k-1}\nabla\qsh\big)\dE
      + \int_{\P} \big((\uv-\PvzP{k}\uvI)\cdot\nabla\psi\big)\Ev\cdot\PvzP{k-1}\nabla\qsh\dE
      \nonumber \\
      & \qquad
      + \int_{\P} \big(\PvzP{k}\uvI\cdot(\nabla\psi-\PvzP{k-1}\nabla\psiI)\big)\,\Ev\cdot\PvzP{k-1}\nabla\qsh\dE
      \nonumber\\
      & \qquad
      + \int_{\P} \big(\PvzP{k}\uvI\cdot\PvzP{k-1}\nabla(\psiI-\psih)     \big)\,\Ev\cdot\PvzP{k-1}\nabla\qsh\dE
      \bigg)
      \nonumber\\
      &=: \eta_{6,1} + \eta_{6,2} + \eta_{6,3} + \eta_{6,4} + \eta_{6,5} + \eta_{6,6}.
      \label{err9}
    \end{align}
  %% }

  \medskip\noindent
  $\bullet$ We use the inverse inequality, definition of
  $\normiii{\cdot}$, and Lemma \ref{lemmaproj2}:
  \begin{align}
    \eta_{6,1}
    \leq \sum_{\P\in\Th} \tau_{\P}\Css{inv}\hP^{-1}
    \NORM{\nabla(\uv-\uvI)}{\P}\,
    \NORM{\PvzP{k-1}\nabla\qsh}{\P}
    \leq \Cs\hh^k\NORM{\uv}{k+1,\Omega}\,\normiii{(\vvh,\qsh)}.
    \label{err10}
  \end{align}

  \medskip\noindent
  $\bullet$ Recalling the orthogonality of the projectors, definition
  of $\tau_{\P}$ and Lemma \ref{lemmaproj1}, we have
  \begin{align}
    \eta_{6,2}
    &
    \leq \sum_{\P\in\Th} \mu\tau_{\P}
    \NORM{\Delta\uv-\PvzP{k-1}\Delta\uv}{\P}\,
    \NORM{\PvzP{k-1}\nabla\qsh-\nabla\qsh}{\P}
    \leq \sum_{\P\in\Th} \mu\tau_{\P} \hP^{k-1}\NORM{\uv}{k+1,\P}\,\NORM{\nabla(I-\PinP{k-1})\qsh}{\P}
    \nonumber\\
    &
    \leq \Cs\hh^k\NORM{\uv}{k+1,\Omega}\,\normiii{(\vvh,\qsh)}.
    \label{err11}
  \end{align}

  \medskip\noindent
  $\bullet$ Following the estimation of $\eta_{6,2}$ and introducing
  the Sobolev embedding theorem, we arrive at
  \begin{align}
    \eta_{6,3}
    &\leq \sum_{\P\in\Th}
    \tau_{\P}
    \NORM{\big(\uv\cdot\nabla\psi\big)\Ev-\PvzP{k-1}\left(\big(\uv\cdot\nabla\psi\big)\Ev\right)}{\P}\,
    \NORM{\nabla\qsh-\PvzP{k-1}\nabla\qsh}{\P}
    \nonumber\\
    %% ---------------------------------------
    & \leq \sum_{\P\in\Th} \tau_{\P} \hP^{k-1}
    \NORM{\Ev}{k-1,\infty,\P}\,
    \snorm{\uv\cdot\nabla\psi}{k-1,\P}\,
    \NORM{\nabla(I-\PinP{k-1})\qsh}{\P}
    \nonumber\\
    %% ---------------------------------------
    & \leq \Cs\hh^k \max_{\P\in\Th}
    \NORM{\Ev}{k-1,\infty,\P}\,
    \NORM{\uv}{k+1,\Omega}\,
    \NORM{\psi}{k+1,\Omega}\,
    \normiii{(\vvh,\qsh)}.
    \label{err12}
  \end{align}

  \medskip\noindent
  $\bullet$ Following the estimation of $\eta_{42}$, we infer
  \begin{align}
    \eta_{6,4}
    & \leq \Cs\hh^k
    \NORM{\psi}{2,\Omega}\,
    \NORM{\uv}{k+1,\Omega}\,
    \normiii{(\vvh,\qsh)}.
    \label{err13}
  \end{align}

  \medskip\noindent
  $\bullet$ Applying the Sobolev embedding theorem, polynomial inverse
  inequality, and Lemma \ref{lemmaproj1}, it holds
  \begin{align}
    \eta_{6,5}
    &\leq \sum_{\P\in\Th} \tau_{\P}
    \NORM{\uvI}{0,4,\P}\,
    \NORM{\nabla\psi-\PvzP{k-1}\,\nabla\psiI}{\P}\,
    \NORM{\PvzP{k-1}\nabla\qsh}{0,4,\P}
    \nonumber\\
    %% --------------------------------------------
    &\leq \sum_{\P\in\Th} \tau_{\P}
    \NORM{\uvI}{0,4,\P}\big(
    \NORM{\nabla\psi-\PvzP{k-1}\nabla\psi}{\P} +
    \NORM{\nabla(\psi-\psiI)}{\P}\big) \hP^{-1/2}\NORM{\PvzP{k-1}\nabla\qsh}{\P}
    \nonumber \\
    %% --------------------------------------------
    & \leq \Cs\hh^k
    \NORM{\nabla\uv}{\Omega}\,
    \NORM{\psi}{k+1,\Omega}\,
    \normiii{(\vvh,\qsh)}
    \nonumber \\
    %% --------------------------------------------
    & \leq \Cs\hh^k \NORM{\psi}{k+1,\Omega}\,\normiii{(\vvh,\qsh)}.
    \label{err14}
  \end{align}

  $\bullet$ Recalling the bound \eqref{bound1}, Remark \ref{qstab} and
  the Sobolev embedding theorem, we obtain
  \begin{align}
    \eta_{6,6}
    &\leq \sum_{\P\in\Th} \East\tau_{\P}
    \NORM{\uvI}{0,4,\P}\,
    \NORM{\PvzP{k-1}\nabla(\psiI-\psih)}{0,4,\P}\,
    \NORM{\PvzP{k-1} \nabla\qsh}{\P}
    \nonumber\\
    &\leq \sum_{\P\in\Th} \East\tau_{\P}^{1/2} \Css{inv} \hP^{-1}
    \NORM{\uvI}{0,4,\P}\,
    \NORM{\psiI-\psih}{0,4,\P}\,
    \normiii{(\vvh,\qsh)}_{\P}
    \nonumber\\ 
    &\leq \frac{\East}{4}
    \NORM{\uvI}{0,4,\Omega}\,
    \NORM{\psiI-\psih}{0,4,\Omega}\,
    \normiii{(\vvh,\qsh)}
    \nonumber\\ 
    &\leq \frac{ \East \Css{1 \hookrightarrow 4}^2}{4}\,
    \NORM{\nabla\uv}{\Omega}\,
    \NORM{\nabla(\psiI-\psih)}{\Omega}\,
    \normiii{(\vvh,\qsh)}.
    \label{err15} 
  \end{align}
  Adding the estimates \eqref{err10}--\eqref{err15}, we arrive at
  \begin{align}
    \eta_6
    &\leq \Cs\hh^k\big(
    \NORM{\uv}{k+1,\Omega} +
    \max_{\P\in\Th}\NORM{\Ev}{k-1,\infty,\P}\,
    \NORM{\uv}{k+1,\Omega}\,
    \NORM{\psi}{k+1,\Omega} +
    \NORM{\psi}{2,\Omega}\,
    \NORM{\uv}{k+1,\Omega} + \NORM{\psi}{k+1,\Omega}
    \big)\,
    \normiii{(\vvh,\qsh)}
    \nonumber \\
    & \quad
    +\frac{ \East \Css{1 \hookrightarrow 4}^2}{4}\,
    \NORM{\nabla\uv}{\Omega}\,
    \NORM{\nabla(\psiI-\psih)}{\Omega}\,
    \normiii{(\vvh,\qsh)}.
    \label{err16}
  \end{align}
  
  \medskip\noindent
  \textbf{Step 5.} {Estimate of $\eta_7$.} Recalling the bound \eqref{vem-a}, the
  divergence-fee property of $\uv$ and Lemmas \ref{lemmaproj1} and
  \ref{lemmaproj2}:
  \begin{align}
    \eta_7
    &= \sum_{\P\in\Th} \delta_E \bigg(
    \int_{\P} \PizP{k-1}\nabla\cdot\uvI\PizP{k-1}\nabla\cdot\vvh\dE
    +\SP_1 \Big(\uvI - \PvnP{k}\uvI,\vvh - \PvnP{k}\vvh\Big)
    \bigg)
    \nonumber \\
    %% -----------------------------------------------------------------
    &= \sum_{\P\in\Th} \delta_E \bigg(
    \int_{\P} \PizP{k-1}(\nabla\cdot\uvI - \nabla\cdot\uv)\PizP{k-1}\nabla\cdot\vvh\dE
    +  \SP_1 \Big(\uvI - \PvnP{k}\uvI, \vvh - \PvnP{k}\vvh\Big)
    \bigg)
    \nonumber \\
    %% -----------------------------------------------------------------
    &\leq C \sum_{\P\in\Th} \delta^{1/2}_{\P}\bigg(
    \NORM{\PizP{k-1}(\nabla\cdot\uvI - \nabla\cdot\uv)}{0,\P} +
    \lambda_1^\ast \NORM{\nabla(\mathbf{I} - \PvnP{k})\uvI}{0,\P}
    \bigg)\,\normiii{(\vvh, \qsh)}_{\P}
    \nonumber \\
        %% -----------------------------------------------------------------
    &\leq \Cs\hh^k \NORM{\uv}{k+1,\Omega}\,\normiii{(\vvh, \qsh)}.
    \label{etaL-2}
  \end{align}
  Finally, combining the estimates \eqref{err1}, \eqref{err7},
  \eqref{err8}, \eqref{err16}, we obtain the estimate \eqref{verror}.
\end{proof}

\begin{lemma} \label{loadf}
  Under the hypothesis of Lemma \ref{velocity}, for
  $(\vvh,\qsh)\in\Vvh{}\times\Qsh$, the following holds
  \begin{align}
    \abs{ \Fs^{\psih}_{\PSPG{},\hh}(\vvh,\qsh) - \Fs^{\psi}_{\PSPG{}}(\vvh,\qsh) }
    &\leq \Cs\hh^k \bigg( \max_{\P\in\Th}\NORM{\Ev}{k-1,\infty,\P}
    \big(\NORM{g}{k-1,\Omega} + \NORM{\kappa}{k-1,\Omega}\big) + \NORM{\psi}{k+1,\Omega}
    \nonumber \\
    &\qquad
    \NORM{\fv}{k-1,\Omega}
    \bigg)\,\normiii{(\vvh,\qsh)}
    \nonumber \\
    & \qquad
    \Big( \Css{P}^2\East\kappa^\ast + \frac{ \Css{P}\abs{\Omega}^{1/2} \East\kappa^\ast}{4\Css{inv}} \Big)\,
    \NORM{\nabla(\psih - \psi_I)}{\Omega}\,
    \normiii{(\vvh,\qsh)}.
    \label{errf}
  \end{align}
\end{lemma}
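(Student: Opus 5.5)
The plan is to write the difference of the load terms element by element as four groups: the body-force term, its PSPG counterpart, the $\kappa$-dependent potential term, and its PSPG counterpart. Within each group we add and subtract suitable projections, so that every contribution is either a polynomial approximation error or a discrete perturbation in $\kappa$. Concretely, on each $\P\in\Th$ we decompose
\begin{align*}
  \Fs^{\psih,\P}_{\PSPG{},\hh}(\vvh,\qsh) - \Fs^{\psi,\P}_{\PSPG{}}(\vvh,\qsh)
  = \sum_{i=1}^{4} \zeta_i^{\P},
\end{align*}
with $\zeta_1^{\P}=\int_{\P}(\PvzP{k}\fv\cdot\PvzP{k}\vvh-\fv\cdot\vvh)\dE$ and $\zeta_2^{\P}=\int_{\P}\tau_{\P}(\PvzP{k}\fv\cdot\PvzP{k-1}\nabla\qsh-\fv\cdot\nabla\qsh)\dE$. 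The terms $\zeta_3^{\P},\zeta_4^{\P}$ are the analogous differences with $\fv$ replaced by $(\gs-\kappa(\cdot))\Ev$.

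\textbf{Force terms.} For $\zeta_1^{\P}$, $L^2$-orthogonality gives $\zeta_1^{\P}=-\int_{\P}(\fv-\PvzP{k-1}\fv)\cdot(\vvh-\PvzP{k}\vvh)\dE$. Lemma~\ref{lemmaproj1} then bounds it by $\Cs\hP^{k-1}\snorm{\fv}{k-1,\P}\,\hP\NORM{\nabla\vvh}{\P}$, which is of order $\hP^{k}$. For $\zeta_2^{\P}$ we write $\PvzP{k}\fv-\fv$ against $\PvzP{k-1}\nabla\qsh$ and add and subtract $\nabla\qsh$. Using $\tau_{\P}\sim\hP^2$, the pieces combine to $\tau_{\P}^{1/2}\hP^{k-1}\NORM{\fv}{k-1,\P}\,\calLss{1,h}^{1/2}(\qsh,\qsh)$, where we recall $\NORM{\PvzP{k-1}\nabla\qsh}{\P}\le\tau_{\P}^{-1/2}\calLss{1,h}^{1/2}$. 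The result is again $\Cs\hh^k\NORM{\fv}{k-1,\Omega}\normiii{(\vvh,\qsh)}$.

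\textbf{Potential terms.} For $\zeta_3^{\P}$ and $\zeta_4^{\P}$ we split
\begin{align*}
  \kappa(\PizP{k}\psih)-\kappa(\psi)
  = \big[\kappa(\PizP{k}\psih)-\kappa(\PizP{k}\psiI)\big]
  + \big[\kappa(\PizP{k}\psiI)-\kappa(\psi)\big].
\end{align*}
For the $\gs\Ev$ part and the consistency part, the same orthogonality and projection arguments as for $\fv$ apply, using $\Ev\in W^{k-1,\infty}$ and the regularity in (A2). They produce the factor $\max_{\P}\NORM{\Ev}{k-1,\infty,\P}(\NORM{\gs}{k-1}+\NORM{\kappa}{k-1})$. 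The bracket $\kappa(\PizP{k}\psiI)-\kappa(\psi)$ is handled by the Lipschitz bound (A0)(ii) together with Lemmas~\ref{lemmaproj1}--\ref{lemmaproj2}, giving $\kappa^\ast\hP^{k+1}\NORM{\psi}{k+1,\P}$.

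For the discrete bracket we use the Lipschitz bound and the $L^2$ stability of $\PizP{k}$ to get $\East\kappa^\ast\NORM{\psih-\psiI}{\P}\NORM{\vvh}{\P}$. Poincaré twice turns this into $\Css{P}^2\East\kappa^\ast\NORM{\nabla(\psih-\psiI)}{\Omega}\NORM{\nabla\vvh}{\Omega}$. In the PSPG version we instead use $\tau_{\P}^{1/2}\le\hP/(4\Css{inv})$ from \eqref{stabc2}, then Cauchy--Schwarz over elements and Poincaré. This yields the constant $\Css{P}\abs{\Omega}^{1/2}\East\kappa^\ast/(4\Css{inv})$, mirroring Step~1 of Lemma~\ref{dswell}.

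\textbf{Main obstacle.} The delicate point is the PSPG consistency terms, because of the weight $\tau_{\P}$ and the mismatch between $\nabla\qsh$ and $\PvzP{k-1}\nabla\qsh$. The factor $\NORM{\nabla(I-\PinP{k-1})\qsh}{\P}$ has to be absorbed into $\calLss{1,h}^{1/2}$ through the stabilization bound \eqref{vem-b}, and the missing power of $\hP$ has to be recovered from $\tau_{\P}^{1/2}\sim\hP$. We would therefore treat these exactly as $\eta_{6,2}$ and $\eta_{6,3}$ are treated in Lemma~\ref{velocity}.
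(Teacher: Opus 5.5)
Your proposal follows the paper's proof essentially step by step. It uses the same four-term split (the paper's $\eta_{F,1},\dots,\eta_{F,4}$) and $L^2$-orthogonality to extract $h^k$ from the $\mathbf{f}$, $g\mathbf{E}$ and $\kappa(\psi)\mathbf{E}$ consistency parts, with the stabilization bound on $S^E_2$ controlling $\nabla(I-\Pi^{\nabla,E}_{k-1})q_h$ in the PSPG pieces. It also obtains the two coefficients of $\|\nabla(\psi_h-\psi_I)\|_\Omega$ from the Lipschitz bound on $\kappa$, combined with Poincar\'e in one case and $\tau_E^{1/2}\le h_E/(4C_{inv})$ in the other, so it is correct to the same standard as the paper; the one caveat, shared with the paper, is that the Lipschitz bound in (A0)(ii) is applied to $\Pi^{0,E}_k\psi_h$ and $\Pi^{0,E}_k\psi_I$, which need not take values in $[\Lambda_\ast,\Lambda^\ast]$.
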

%%

%% fin qui fin qui fin qui

\begin{proof}
  Let us recall the definition of the loads
  {$F^{\psih}_{\text{PSPG},h}(\cdot, \cdot)$ and $F^{\psi}_{\PSPG{}}(\cdot, \cdot)$},
  \begin{align}
    \eta_F :&= F^{\psih}_{\text{PSPG},h}(\vvh, \qsh) - F^{\psi}_{\PSPG{}}(\vvh, \qsh) \nonumber \\ 
    &= \sum_{\P\in\Th} \bigg( \int_{\P} \PvzP{k} \fv \cdot \boldsymbol{\Pi}^{0,\P}_{k}\vvh\dE - \int_{\P} \fv \cdot \vvh\dE + \int_{\P} \tau_{\P} \PvzP{k} \fv \cdot \PvzP{k-1}\nabla\qsh \,dE  - \int_{\P} \tau_{\P} \fv \cdot \nabla\qsh \,dE \, + \nonumber \\
      & \qquad \int_{\P}  \big(g-\kappa(\PizP{k}\psih)\big)\Ev \cdot \boldsymbol{\Pi}^{0,\P}_{k}\vvh \,dE - \int_{\P} \big(g-\kappa(\psi)\big)\Ev \cdot \vvh \,dE \, + \nonumber \\ & \qquad \int_{\P} \tau_{\P} \big(g-\kappa(\PizP{k}\psih)\big)\Ev \cdot \PvzP{k-1}\nabla\qsh \,dE - \int_{\P} \tau_{\P} \big(g-\kappa(\psi)\big)\Ev \cdot \nabla\qsh \,dE \bigg) \nonumber \\
    &=: \eta_{F,1} + \eta_{F,2} + \eta_{F,3} +\eta_{F,4}. \label{verr2}
 \end{align}
  Employing the orthogonality of the projection operators, regularity
  \textbf{(A2)} and Lemma \ref{lemmaproj1}, we infer
  \begin{align}
    \eta_{F,1} &= \sum_{\P\in\Th} \int_{\P} \big(\PvzP{k} \fv - \fv \big) \cdot \big(\vvh - \boldsymbol{\Pi}^{0,\P}_{k}\vvh \big)\dE \leq \sum_{\P\in\Th} \NORM{\PvzP{k} \fv - \fv}{\P} \NORM{\vvh - \boldsymbol{\Pi}^{0,\P}_{k}\vvh}{\P} \nonumber \\
    & \leq Ch^{k} \NORM{\fv}{k-1,\Omega} \normiii{(\vvh,\qsh)}. \label{verr3}
 \end{align}
  Following the estimation of $\eta_{F,1}$, and using the definition
  of $\tau_{\P}$ and $\normiii{\cdot}$, it holds
  \begin{align}
    \eta_{F,2} & \leq Ch^{k} \NORM{\fv}{k-1,\Omega} \normiii{(\vvh,\qsh)}. \label{verr4}
  \end{align}
  Concerning $\eta_{F,3}$, we add and subtract suitable terms, and use
  the property of the projectors and Lemma \ref{lemmaproj1}:
  \begin{align}
    \eta_{F,3} &= \sum_{\P\in\Th} \bigg( \int_{\P}  \big(g-\kappa(\PizP{k}\psih)\big)\Ev \cdot \boldsymbol{\Pi}^{0,\P}_{k}\vvh \,dE - \int_{\P} \big(g-\kappa(\psi)\big)\Ev \cdot \vvh \,dE\bigg) \nonumber \\
    &= \sum_{\P\in\Th} \bigg( \int_{\P} g \Ev \cdot \big( \boldsymbol{\Pi}^{0,\P}_{k}\vvh - \vvh \big)\,dE + \int_{\P}  \big( \kappa(\psi) - \kappa(\PizP{k}\psih) \big) \Ev \cdot  \boldsymbol{\Pi}^{0,\P}_{k}\vvh \,dE \nonumber \\
      & \qquad  + \int_{\P} \kappa(\psi)\Ev \cdot \big(\vvh - \boldsymbol{\Pi}^{0,\P}_{k}\vvh \big) \,dE\bigg) \nonumber \\
    &= \sum_{\P\in\Th} \bigg( \int_{\P} \big(g \Ev - \PvzP{k} (g \Ev) \big)\cdot \big( \boldsymbol{\Pi}^{0,\P}_{k}\vvh - \vvh \big)\,dE+ \int_{\P}  \big( \kappa(\psi) - \kappa(\PizP{k}\psih) \big) \Ev \cdot  \boldsymbol{\Pi}^{0,\P}_{k}\vvh \,dE \nonumber \\
      & \qquad  + \int_{\P} \big( \kappa(\psi)\Ev - \PvzP{k}(\kappa(\psi)\Ev) \big)\cdot \big(\vvh - \boldsymbol{\Pi}^{0,\P}_{k}\vvh \big) \,dE\bigg) \nonumber \\
    &\leq C \sum_{\P\in\Th} \bigg( h^k_E \NORM{\Ev}{k-1,\infty,\P} \NORM{g}{k-1,\P} \NORM{\nabla\vvh}{\P} + \East \kappa^\ast \NORM{\psi - \PizP{k}\psih}{\P} \NORM{\vvh}{\P} \nonumber \\
      & \qquad  +  h^k_E \NORM{\Ev}{k-1,\infty,\P} \NORM{\kappa}{k-1,\P} \NORM{\nabla\vvh}{\P}\bigg) \nonumber \\
    &\leq \Cs\hh^k \bigg( \max_{\P\in\Th} \NORM{\Ev}{k-1,\infty,\P} \big(\NORM{g}{k-1,\Omega}  + \NORM{\kappa}{k-1,\Omega}\big) + \NORM{\psi}{k+1,\Omega} \bigg)  \normiii{(\vvh,\qsh)} \nonumber \\
    & \qquad  + \Css{P}^2 \East \kappa^\ast \NORM{\nabla(\psih - \psi_I)}{\Omega} \normiii{(\vvh,\qsh)}. \label{verr5}
  \end{align}
  Following the estimation of $\eta_{F,3}$ and using the definition of
  $\tau_{\P}$, we arrive at
  \begin{align}
    \eta_{F,4}	&\leq \Cs\hh^k \bigg( \max_{\P\in\Th} \NORM{\Ev}{k-1,\infty,\P} \big(\NORM{g}{k,\Omega}  + \NORM{\kappa}{k-1,\Omega}\big) + \NORM{\psi}{k+1,\Omega} \bigg)  \normiii{(\vvh,\qsh)} \nonumber \\
    & \qquad  + \frac{ \Css{P} |\Omega|^{1/2} \East \kappa^\ast}{4 \Css{inv}} \NORM{\nabla(\psih - \psi_I)}{\Omega} \normiii{(\vvh,\qsh)}, \label{verr6}
  \end{align}
  which completes the proof.
  %% \qquad \qquad $\blacksquare$
\end{proof}

%%%%%%%%%%%%%%%%%%%%%%%%%%%%%%%%%%%%%%%%%%%%%%%%%%%%%%%%%%%%%%%%%%%%%%%%%%%%%%%%%%%%%%%%%%%%%%%%%%%%%%%%%%%%
% TEMPERATURE LEMMA
%%%%%%%%%%%%%%%%%%%%%%%%%%%%%%%%%%%%%%%%%%%%%%%%%%%%%%%%%%%%%%%%%%%%%%%%%%%%%%%%%%%%%%%%%%%%%%%%%%%%%%%%%%%%

\begin{lemma}\label{temp}
  Let the assumption \textbf{(A2)} and the hypothesis of Theorems
  \ref{wellp} and \ref{dwell2} hold true. For any given $\uv
 \in\Vv_{div}$ and $\uvh\in\Vvh{}$, let $\psi
 \in\mathlarger \Phi$ and $\psih\in\mathlarger \phih$ be the
  solution of problems \eqref{variation2} and \eqref{dsvem2},
  respectively. Then, the following holds
  \begin{align}
    \NORM{\nabla(\psi_I -\psih)}{\Omega}
    & \leq \Cs\hh^k \bigg(
    \NORM{\psi}{k+1,\Omega} +
    \NORM{\uv}{k+1,\Omega} +
    \NORM{\psi}{k+1,\Omega}\,\NORM{\uv}{k,\Omega} +
    \NORM{\kappa}{k-1,\Omega} +
    \NORM{g}{k-1,\Omega}
    \bigg)
    \nonumber \\
    & \qquad
    +\frac{\Css{1\hookrightarrow 4}^2}{\gamma_\ast\epsilon}
    \NORM{\nabla\psi}{\Omega}\,
    \NORM{\nabla(\uvI - \uvh)}{\Omega}.
    \label{errtemp}
  \end{align}
\end{lemma}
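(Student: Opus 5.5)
We set $\xi_h:=\psih-\psiI\in\mathlarger\Phi_{0,\hh}$ and use the discrete strong monotonicity of $\calBh(\uvh;\cdot,\cdot)$ already exploited in the proof of Theorem~\ref{dwell2} and in Step~2 of Lemma~\ref{dswell}. The bound \eqref{est4}, the skew-symmetry of $\cs^{skew,\P}_{\ps,\hh}$ (which vanishes when its last two arguments coincide), and the monotonicity of $\kappa$ in \textbf{(A0)} applied pointwise to $\PizP{k}\psih$ and $\PizP{k}\psiI$ give
\begin{align*}
  \gamma_\ast\epsilon\NORM{\nabla\xi_h}{\Omega}^2
  \leq \calBh(\uvh;\psih,\xi_h)-\calBh(\uvh;\psiI,\xi_h)
  = (\gsh,\xi_h)-\calBh(\uvh;\psiI,\xi_h).
\end{align*}
We then insert the continuous equation \eqref{variation2} tested with $\xi_h\in\mathlarger\Phi_0$, namely $\calB(\uv;\psi,\xi_h)=(\gs,\xi_h)$. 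Since $\uv\in\Vv_{div}$, \eqref{skew-p} lets us write $\css{p}(\uv;\psi,\xi_h)=\cs^{skew}_p(\uv;\psi,\xi_h)$. The right-hand side then becomes a sum of four consistency terms: a load term $(\gsh,\xi_h)-(\gs,\xi_h)$, a diffusion term $\ass{p}(\psi,\xi_h)-\asph(\psiI,\xi_h)$, a reaction term $\ds(\psi,\xi_h)-\dsPh(\psiI,\xi_h)$ summed over elements, and a convection term $\cs^{skew}_p(\uv;\psi,\xi_h)-\cs^{skew}_{\ps,\hh}(\uvh;\psiI,\xi_h)$.

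The load and diffusion terms are standard. The load term equals $\sum_{\P}\int_{\P}(\gs-\PizP{k}\gs)(\PizP{k}\xi_h-\xi_h)\dE$. Two applications of Lemma~\ref{lemmaproj1} and Poincar\'e bound it by $\Cs\hh^k\NORM{\gs}{k-1,\Omega}\NORM{\nabla\xi_h}{\Omega}$. For the diffusion term we add and subtract $\PinP{k}\psi$ and use polynomial consistency of $\asph$. Lemmas~\ref{lemmaproj1} and~\ref{lemmaproj2} together with \eqref{vem-c} then give $\Cs\hh^k\NORM{\psi}{k+1,\Omega}\NORM{\nabla\xi_h}{\Omega}$. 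For the reaction term we split $\kappa(\psi)\xi_h-\kappa(\PizP{k}\psiI)\PizP{k}\xi_h$ into two pieces. The first is $(\kappa(\psi)-\PizP{k}\kappa(\psi))(\xi_h-\PizP{k}\xi_h)$, which gives the $\NORM{\kappa}{k-1,\Omega}$ contribution as in $\eta_{F,3}$. The second is $(\kappa(\psi)-\kappa(\PizP{k}\psiI))\PizP{k}\xi_h$. Lipschitz continuity of $\kappa$ and $\NORM{\psi-\PizP{k}\psiI}{\P}\leq\Cs\hP^{k+1}\NORM{\psi}{k+1,\P}$ bound it.

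The main obstacle is the convection term, because it must produce the coupling term $\frac{\Css{1\hookrightarrow4}^2}{\gamma_\ast\epsilon}\NORM{\nabla\psi}{\Omega}\NORM{\nabla(\uvI-\uvh)}{\Omega}$ with exactly that constant. We plan to split it as
\begin{align*}
  \big[\cs^{skew}_p(\uv;\psi,\xi_h)-\cs^{skew}_{\ps,\hh}(\uvI;\psi,\xi_h)\big]
  +\big[\cs^{skew}_{\ps,\hh}(\uvI;\psi,\xi_h)-\cs^{skew}_{\ps,\hh}(\uvI;\psiI,\xi_h)\big]
  +\cs^{skew}_{\ps,\hh}(\uvI-\uvh;\psi,\xi_h).
\end{align*}
Here $\cs^{skew}_{\ps,\hh}$ is applied to the continuous function $\psi$, which is legitimate because the projections are defined on $\HONE$. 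The first two brackets are treated exactly like $\eta_{41}$--$\eta_{43}$ in Lemma~\ref{velocity}: orthogonality of $\PvzP{k}$ and $\PvzP{k-1}$, H\"older in $L^4$, Remark~\ref{qstab}, and the Sobolev embedding. This yields $\Cs\hh^k(\NORM{\uv}{k+1,\Omega}+\NORM{\psi}{k+1,\Omega}+\NORM{\psi}{k+1,\Omega}\NORM{\uv}{k,\Omega})\NORM{\nabla\xi_h}{\Omega}$, using $\NORM{\nabla\uvI}{\Omega}\leq \Cs\,\Css{u}$. For the last term we use H\"older with exponents $(4,2,4)$, the $L^4$-stability of the projectors, and $\Css{1\hookrightarrow4}$. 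This gives $\Css{1\hookrightarrow4}^2\NORM{\nabla\psi}{\Omega}\NORM{\nabla(\uvI-\uvh)}{\Omega}\NORM{\nabla\xi_h}{\Omega}$; the $\PizP{k}$-stability constant in $L^4$ is absorbed, as elsewhere in the paper. One delicate point concerns the half of the skew form in which $\nabla\xi_h$ appears. There we first move the projection onto $\psi$ by orthogonality, and then bound $\PvzP{k-1}\nabla\psi$ in $L^2$ after placing $\xi_h$ in $L^4$.

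Finally, we divide the resulting inequality by $\gamma_\ast\epsilon\NORM{\nabla\xi_h}{\Omega}$. This gives \eqref{errtemp}, with the constant $\Cs$ absorbing $1/(\gamma_\ast\epsilon)$, $E^\ast$, and the stability constants.
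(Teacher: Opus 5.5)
Your overall route is the paper's. You use discrete strong monotonicity of $\calBh(\uvh;\cdot,\cdot)$, insert \eqref{variation2} (with \eqref{skew-p} to pass to $c^{skew}_p$), and split into load, diffusion, reaction and convection consistency terms, as in the paper's $\bss{0}$--$\bss{3}$. Your handling of the first three is fine, and your choice $\xi_h=\psih-\psiI$ gives the correct sign in the monotonicity step.

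The gap is in the convection term: your three brackets do not add up to $c^{skew}_p(\uv;\psi,\xi_h)-c^{skew}_{p,h}(\uvh;\psiI,\xi_h)$. Expanding by trilinearity gives
\begin{align*}
c^{skew}_p(\uv;\psi,\xi_h)-c^{skew}_{p,h}(\uvh;\psiI,\xi_h)
&=\big[c^{skew}_p(\uv;\psi,\xi_h)-c^{skew}_{p,h}(\uvI;\psi,\xi_h)\big]
+c^{skew}_{p,h}(\uvI;\psi-\psiI,\xi_h)\\
&\quad+c^{skew}_{p,h}(\uvI-\uvh;\psi,\xi_h)
-c^{skew}_{p,h}(\uvI-\uvh;\psi-\psiI,\xi_h),
\end{align*}
so the last cross term is missing. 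It can only be bounded by $C\hh^k\NORM{\psi}{k+1,\Omega}\NORM{\nabla(\uvI-\uvh)}{\Omega}\NORM{\nabla\xi_h}{\Omega}$. That is not of the form in \eqref{errtemp}: it is neither a pure data term times $\hh^k$ nor the coupling term with the stated constant, unless $\NORM{\nabla\uvh}{\Omega}$ is bounded independently of $\hh$.

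So the ingredient you deliberately avoided, by using only $\NORM{\nabla\uvI}{\Omega}\leq C\Css{u}$, is actually needed. To repair the argument, put $\uvh$ instead of $\uvI$ in your middle bracket, i.e.\ use $c^{skew}_{p,h}(\uvh;\psi-\psiI,\xi_h)$. This makes the split exact and is the paper's $\bss{2,3}$. Equivalently, use the paper's split $\big[c^{skew}_p(\uv;\psi,\cdot)-c^{skew}_{p,h}(\uv;\psi,\cdot)\big]+c^{skew}_{p,h}(\uv-\uvh;\psi,\cdot)+c^{skew}_{p,h}(\uvh;\psi-\psiI,\cdot)$ with the triangle inequality through $\uvI$. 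In either case, bound $\NORM{\nabla\uvh}{\Omega}$ by the stability estimate \eqref{dstabv} of Theorem~\ref{dwell1}.

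This requires $\uvh$ to be the discrete velocity, as it is when the lemma is applied in Proposition~\ref{convergence]}, not an arbitrary element of $\Vvh{}$ as the statement literally allows. The paper relies on the same fact (``boundedness of $\uvh$ in the $\HONE$-semi norm''), so you should state it as a hypothesis.

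Finally, your ``delicate point'' is unnecessary, and the manoeuvre as described does not work. In the half $\int_{\P}(\PvzP{k}\wv\cdot\PvzP{k-1}\nabla\xi_h)\,\PizP{k}\psi\dE$ with $\wv=\uvI-\uvh$, the other factors form a polynomial of degree $2k$, so orthogonality cannot remove $\PvzP{k-1}$. Instead, just use $\NORM{\PvzP{k-1}\nabla\xi_h}{\P}\leq\NORM{\nabla\xi_h}{\P}$ and put $\PvzP{k}\wv$ and $\PizP{k}\psi$ in $L^4$.
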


\begin{proof}
  We first set $\phihTld:=\psi_I -\psih$, and let us recall
  the stability properties of the bilinear form $\ass{p}(\cdot,\cdot)$ and
  $\asph(\cdot,\cdot)$ and the strongly monotone property of
  $\kappa(\cdot)$, we infer
  \begin{align}
    \gamma_\ast \epsilon \NORM{\nabla(\psi_I -\psih)}{\Omega}^2
    &\leq
    \calBh(\uvh;\psih,\phihTld) - \calB(\uvh;\psiI,\phihTld)
    \nonumber \\
    &=
    (\gsh,\phihTld) - (\gs,\phihTld) +
    \calB(\uv;\psi,\phihTld) - \calB(\uvh;\psiI,\phihTld)
    \nonumber \\
    & =
    (\gsh,\phihTld) - (\gs,\phihTld) +
    \ass{p}(\psi,\phihTld)
    - \asph(\psiI,\phihTld)
    + \cs^{skew}_{p}(\uv;\psi,\phihTld)
    - \cs^{skew}_{p,h}(\uvh;\psiI,\phihTld)
    \nonumber \\
    & \qquad
    +
   { \ds(\psi,\phihTld) -\ds_h(\psiI,\phihTld)}
    \nonumber \\
    &=: \bss{0} + \bss{1} + \bss{2} + \bss{3}.
    \label{perr1}
  \end{align}

  \medskip\noindent
  $\bullet$ Employing the orthogonality of $\PizP{k}$, the assumption
  \textbf{(A2)} and Lemma \ref{lemmaproj1}, we have
  \begin{align}
    \bss{0}
    \leq \sum_{\P\in\Th}
    \NORM{\gs - \PizP{k}\gs}{\P}\,
    \NORM{\phihTld - \PizP{k}\phihTld}{\P}
    \leq  \Cs\hh^k
    \NORM{\gs}{k-1,\Omega}\,
    \NORM{\nabla\phihTld}{\Omega}.
    \label{perr0}
  \end{align}

  \medskip\noindent
  $\bullet$ We use the triangle inequality, the property of
  $L^2$-projectors, the bound \eqref{vem-c} and Lemmas
  \ref{lemmaproj1} and \ref{lemmaproj2}:
  \begin{align}
    \bss{1}
    &=
    \sum_{\P\in\Th} \epsilon \bigg(
    \int_{\P} \nabla\psi\cdot\nabla\phihTld\dE -
    \int_{\P} \PvzP{k-1}\nabla\psiI\cdot\PvzP{k-1}\nabla\phihTld\dE -
    \SP_3 \big( \psiI - \Pi^{\nabla,\P}_k\psiI, \phihTld - \PinP{k}\phihTld\big)
    \bigg)
    \nonumber \\
    &=
    \sum_{\P\in\Th} \epsilon \bigg(
    \int_{\P} \big(\nabla\psi - \PvzP{k-1}\nabla\psiI \big)\cdot\PvzP{k-1}\nabla\phihTld\dE -
    \SP_3 \big( \psiI - \PinP{k}\psiI, \phihTld - \PinP{k}\phihTld\big)
    \bigg)
    \nonumber \\
    &\leq \Cs \sum_{\P\in\Th} \epsilon\bigg(
    \hP^k\NORM{\psi}{k+1,\P}\,\NORM{\nabla\phihTld}{\P} +
    \lambda_3^\ast \hP^k\NORM{\psi}{k+1,\P}\,\NORM{\nabla\phihTld}{\P}
    \bigg)
    \nonumber \\
    &\leq \Cs\hh^k
    \NORM{\psi}{k+1,\Omega}\,
    \NORM{\nabla\phihTld}{\Omega}.
    \label{perr2}
  \end{align}

  \medskip\noindent
  $\bullet$ Concerning $\bss{2}$, we proceed as follows
  \begin{align}
    \bss{2}
    &= \cs^{skew}_p(\uv;\psi,\phihTld) - \cs^{skew}_{p,h}(\uvh;\psiI,\phihTld)
    \nonumber\\
    &= \cs^{skew}_p(\uv;\psi,\phihTld) - \cs^{skew}_{p,h}(\uv;\psi,\phihTld)
    + \cs^{skew}_{p,h}(\uv-\uvh;\psi,\phihTld) + \cs^{skew}_{p,h}(\uvh;\psi-\psiI,\phihTld)
    \nonumber\\
    &=: \bss{2,1} + \bss{2,2} + \bss{2,3}.
    \label{perr3}
  \end{align}
  Following \cite[Lemma 4.3]{mvem9}, there holds that
  \begin{align}
    \abs{\bss{2,1}}
    &\leq \Cs\hh^k \bigg(
    \NORM{\uv}{k,\Omega}\,\NORM{\psi}{k+1,\Omega} +
    \NORM{\nabla\uv}{\Omega}\,\NORM{\psi}{k+1,\Omega} +
    \NORM{\uv}{k+1,\Omega}\,\NORM{\nabla\psi}{\Omega}
    \bigg) \NORM{\nabla\phihTld}{\Omega}
    \nonumber \\
    &\leq \Cs\hh^k \bigg(
    \NORM{\uv}{k,\Omega}\,\NORM{\psi}{k+1,\Omega} +
    \NORM{\psi}{k+1,\Omega} +
    \NORM{\uv}{k+1,\Omega}
    \bigg)\,\NORM{\nabla \phihTld}{\Omega}.
    \label{perr4}
  \end{align}
  We use the Sobolev embedding theorem and the estimation of
  $\eta_{43}$:
  \begin{align}
    \abs{\bss{2,2}}
    &\leq \Css{1\hookrightarrow 4}^2
    \NORM{\nabla(\uv-\uvh}{\Omega}\,
    \NORM{\nabla\psi}{\Omega}\,
    \NORM{\nabla\phihTld}{\Omega}.
    \label{perr6}
  \end{align}
  Following the estimation of $\eta_{43}$, we get
  \begin{align}
    \abs{\bss{2,3}}
    &\leq \Cs\hh^k
    \NORM{\nabla\uvh}{\Omega}\,
    \NORM{\psi}{k+1,\Omega}\,
    \NORM{\nabla\phihTld}{\Omega}
    \nonumber \\
    &\leq \Cs\hh^k
    \NORM{\psi}{k+1,\Omega}\,
    \NORM{\nabla\phihTld}{\Omega},
    \label{perr5}
  \end{align}
  where the last line is obtained using the boundedness of
  $\uvh$ in the $\HONE$-semi norm.

  \medskip\noindent
  Combining the estimates \eqref{perr4}--\eqref{perr6} and applying
  Lemma \ref{lemmaproj2}, we obtain the following
  \begin{align}
    \bss{2}
    &\leq \Cs\hh^k \bigg(
    \big(1+\NORM{\uv}{k,\Omega}\big)\,\NORM{\psi}{k+1,\Omega} +
    \NORM{\uv}{k+1,\Omega}
    \bigg)\,
    \NORM{\nabla\phihTld}{\Omega}
    \nonumber \\
    & \qquad
    +\Css{1 \hookrightarrow 4}^2
    \NORM{\nabla(\uvI-\uvh}{\Omega}\,
    \NORM{\nabla\psi}{\Omega}\,
    \NORM{\nabla\phihTld}{\Omega}.
    \label{perr7}
  \end{align}

  \medskip\noindent
  $\bullet$ Adding and subtracting appropriate terms and using the
  orthogonality property of $\Pi^{0, E}_k$, we infer
  \begin{align}
    \bss{3}
    & = \sum_{\P\in\Th} \bigg(
    \int_{\P} \kappa(\psi)\phihTld\dE -
    \int_{\P} \kappa(\PizP{k}\psiI)\PizP{k}\phihTld\dE
    \bigg)
    \nonumber \\
    & = \sum_{\P\in\Th} \bigg(
    \int_{\P} \big(\kappa(\psi)-\PizP{k}\kappa(\psi)\big)\,\big(\phihTld-\PizP{k}\phihTld\big)\dE -
    \int_{\P} \big(\kappa(\psi)-\kappa(\PizP{k}\psiI)\big)\PizP{k}\phihTld\dE
    \bigg)
    \nonumber \\	
    &\leq \Cs\sum_{\P\in\Th} \bigg(
    \hP^{k}\NORM{\kappa(\psi)}{k-1,\P}\,\NORM{\nabla \phihTld}{\P} +
    \kappa^\ast\NORM{\psi-\PizP{k}\psiI}{\P}\,\NORM{\phihTld}{\P}
    \bigg)
    \nonumber \\		
    & \leq \Cs\hh^k\bigg(
    \NORM{\kappa}{k-1,\Omega} +
    \NORM{\psi}{k+1,\Omega}
    \bigg)\,\NORM{\nabla\phihTld}{\Omega}.
    \label{perr8}
  \end{align}
  Finally, combining the estimates \eqref{perr0}, \eqref{perr2},
  \eqref{perr7} and \eqref{perr8}, we readily arrive at the result
  \eqref{errtemp}.
\end{proof}

%%%%%%%%%%%%%%%%%%%%%%%%%%%%%%%%%%%%%%%%%%%%%%%%%%%%%%%%%%%%%%%%%%%%%%%%%%%%%%%%%%%%%%%%%%%%%%%%%%%%%%%%%%%%%
%                            Convergence study
%%%%%%%%%%%%%%%%%%%%%%%%%%%%%%%%%%%%%%%%%%%%%%%%%%%%%%%%%%%%%%%%%%%%%%%%%%%%%%%%%%%%%%%%%%%%%%%%%%%%%%%%%%%%%

Hereafter, we present the convergence estimate in the energy norm:

\begin{proposition}
  \label{convergence}
  Under the assumption \textbf{(A0)} and the assumptions of
  Proposition \ref{uni1} and Proposition \ref{uni2}, let
  $(\uv,\ps,\psi)\in\Vv\times\Qs\times\mathlarger\Phi$ and
  $(\uvh,\psh,\psih)\in\Vvh{}\times\Qsh\times{\mathlarger \Phi_h}$ be the
  solution of the problems \eqref{variation1} and \eqref{svem}.
  Furthermore, we assume that the stabilization parameters are such
  that the assumption \eqref{stabc2} holds true.
  We assume that data of problem~\eqref{P} are such that
  \begin{align}	
    \beta_\ast -
    \frac{\mu}{2\gamma_\ast\epsilon}\Big(
    \frac{5\Css{1 \hookrightarrow 4}^2}{4} \Css{u} +
    \Css{P}^2\kappa^\ast +
    \frac{\Css{P}\abs{\Omega}^{1/2}\kappa^\ast}{4\Css{inv}}
    \Big) > 0.
    \label{cgtcon}
  \end{align}
  Then under the assumption \textbf{(A2)}, we have the following error
  estimate
  \begin{align}
    \normiii{(\uv-\uvh,\ps-\psh)} + \NORM{\nabla(\theta-\theta_h)}{\Omega}
    &\leq \Cs\hh^k\bigg(
    \NORM{\uv}{k+1,\Omega}  +
    \NORM{\ps}{k,\Omega}    +
    \NORM{\psi}{k+1,\Omega} +
    \NORM{\uv}{k+1,\Omega}\,\NORM{\psi}{k+1,\Omega}
    \nonumber \\
    & \qquad
    + \NORM{\psi}{2,\Omega}\,
    \NORM{\uv}{k+1,\Omega}\,
    \NORM{\gs}{k-1,\Omega}
    + \NORM{\kappa}{k-1,\Omega}
    + \NORM{\psi}{k+1,\Omega}
    + \NORM{\fv}{k-1,\Omega}
    \bigg),
    \label{converge0}
  \end{align}
  where the positive constant $C$ additionally depends on $\Ev$.
\end{proposition}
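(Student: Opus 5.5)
We split the error through the virtual interpolants, in the standard way. Write $\uv-\uvh=(\uv-\uvI)+(\uvI-\uvh)$, $\ps-\psh=(\ps-\psI)+(\psI-\psh)$ and $\psi-\psih=(\psi-\psiI)+(\psiI-\psih)$. The interpolation parts are handled by Lemma~\ref{lemmaproj2} together with the definition of $\normiii{\cdot}$. For the $\calL_{1,h}$ and $\calL_{3,h}$ contributions we use \eqref{vem-a}, \eqref{vem-b}, $\tau_{\P}\sim\hP^2$ and $\delta_{\P}\leq 1$. These parts are bounded by $\Cs\hh^k(\NORM{\uv}{k+1,\Omega}+\NORM{\ps}{k,\Omega}+\NORM{\psi}{k+1,\Omega})$. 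It therefore remains to bound the discrete errors $e_h:=(\uvI-\uvh,\psI-\psh)$ and $\phihTld:=\psiI-\psih$.

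\textbf{Velocity--pressure part.} We apply the discrete inf-sup condition \eqref{d1well1} (Theorem~\ref{dwell1}, valid since $\psih\in\widehat{\Phi}_h$ by Proposition~\ref{uni2}) to $e_h$. This gives some $(\vvh,\qsh)$ with
\[
\beta_\ast\normiii{e_h}\,\normiii{(\vvh,\qsh)}\leq \calA_{\PSPG{},\hh}(\psih;(\uvI,\psI),(\vvh,\qsh))-\calA_{\PSPG{},\hh}(\psih;(\uvh,\psh),(\vvh,\qsh)).
\]
We replace the second term by $\Fs^{\psih}_{\PSPG{},\hh}(\vvh,\qsh)$ using \eqref{dsvem1}. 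By consistency of the smooth solution with \eqref{svariation}, we may add $\calA_{\PSPG{}}(\psi;(\uv,\ps),(\vvh,\qsh))-\Fs^{\psi}_{\PSPG{}}(\vvh,\qsh)=0$. This consistency holds because the exact solution satisfies the strong equations elementwise, so the PSPG residual terms vanish. The right-hand side then splits into the difference bounded in Lemma~\ref{velocity} and the load difference bounded in Lemma~\ref{loadf}. Using $\NORM{\nabla\uv}{\Omega}\leq\Css{u}$, this yields
\[
\beta_\ast\normiii{e_h}\leq \Cs\hh^k(\dots)+\Big(\tfrac{5\East\Css{1\hookrightarrow4}^2}{4}\Css{u}+\Css{P}^2\East\kappa^\ast+\tfrac{\Css{P}\abs{\Omega}^{1/2}\East\kappa^\ast}{4\Css{inv}}\Big)\NORM{\nabla\phihTld}{\Omega}.
\]

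\textbf{Potential part and closing the loop.} Lemma~\ref{temp} gives
\[
\NORM{\nabla\phihTld}{\Omega}\leq \Cs\hh^k(\dots)+\frac{\Css{1\hookrightarrow4}^2}{\gamma_\ast\epsilon}\NORM{\nabla\psi}{\Omega}\NORM{\nabla(\uvI-\uvh)}{\Omega}.
\]
Here $\psi\in\widehat{\Phi}$, so $\NORM{\nabla\psi}{\Omega}\leq\mu/(2\East\Css{1\hookrightarrow4}^2)$. The coefficient therefore becomes $\mu/(2\gamma_\ast\epsilon\East)$, and we bound $\NORM{\nabla(\uvI-\uvh)}{\Omega}\leq\normiii{e_h}$. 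Substituting this into the velocity--pressure inequality cancels the factor $\East$. The coupling coefficient multiplying $\normiii{e_h}$ on the right is then exactly
\[
\frac{\mu}{2\gamma_\ast\epsilon}\Big(\tfrac{5\Css{1\hookrightarrow4}^2}{4}\Css{u}+\Css{P}^2\kappa^\ast+\tfrac{\Css{P}\abs{\Omega}^{1/2}\kappa^\ast}{4\Css{inv}}\Big).
\]
By \eqref{cgtcon} this is strictly less than $\beta_\ast$, so it can be absorbed into the left-hand side. This gives $\normiii{e_h}\leq \Cs\hh^k(\dots)$. Inserting this back into Lemma~\ref{temp} gives the same bound for $\NORM{\nabla\phihTld}{\Omega}$. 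The triangle inequality then yields \eqref{converge0}, where $\theta$ there denotes the potential $\psi$. The $\Ev$-dependent factors $\max_{\P}\NORM{\Ev}{k-1,\infty,\P}$ are absorbed into $\Cs$.

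\textbf{Main obstacle.} The main difficulty is the bookkeeping in this absorption step. The nonlinear couplings must be kept with explicit constants rather than hidden in $\Cs$, so that the coefficients of the cross terms match \eqref{cgtcon}. Lemma~\ref{temp} also involves $\uvh$ through $\bss{2,3}$, whose bound needs the discrete stability \eqref{vvelv2}; we quote that estimate. A further point is that the consistency step requires $\uv$ and $\ps$ smooth enough on each element for $\Delta\uv$ to make sense, which (A2) provides.
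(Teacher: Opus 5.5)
Your proposal is correct and follows essentially the same route as the paper. Both arguments apply the discrete inf-sup condition \eqref{d1well1} to the discrete error, use consistency of the exact solution with \eqref{svariation}, invoke Lemmas~\ref{velocity} and~\ref{loadf}, and then close with Lemma~\ref{temp}, absorption via \eqref{cgtcon}, and the triangle inequality with \eqref{projlll2}.

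Your explicit use of $\NORM{\nabla\psi}{\Omega}\leq\mu/(2\East\Css{1\hookrightarrow4}^2)$, coming from $\psi\in\widehat{\mathlarger\Phi}$, is exactly what produces the coefficient in \eqref{cgtcon}; the paper leaves this step implicit. One small correction: justify \eqref{stabc1} for $\psih$ by \eqref{dstabp} together with the data condition of Lemma~\ref{set2}, not by $\psih\in\widehat{\mathlarger\Phi}_h$ alone, since the radius $\rs$ may exceed $\alpha_\ast\mu/(2\East\Css{1\hookrightarrow4}^2)$.
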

\begin{proof}
  Using the discrete inf-sup condition \eqref{d1well1}, it holds
  \begin{align}
    \beta_\ast \normiii{(\uvh-\uvI,\psh-\psI)}\,\normiii{(\vvh,\qsh)}
    &
    \leq \calA_{\PSPG{},\hh}\big(\psih;(\uvh-\uvI,\psh-\psI),(\vvh,\qsh)\big)
    \nonumber \\
    & =
    \Fs^{\psih}_{\PSPG{},h}(\vvh,\qsh) -
    \Fs^{\psi}_{\PSPG{}}(\vvh,\qsh)
    \nonumber \\
    & \quad
    + \calA_{\PSPG{}}\big(\psi;(\uv,\ps),(\vvh,\qsh)\big)
    - \calA_{\PSPG{},\hh}\big(\psih;(\uvI,\psI ),(\vvh,\qsh) \big)
    \nonumber \\
    & =: \eta_F + \eta_A. \label{cerr1}
  \end{align}
  Recalling Lemmas \ref{velocity} and \ref{loadf}, we have
  \begin{align}
    \beta_\ast \normiii{(\uvh-\uvI,\psh-\psI)}
    &\leq \Cs\hh^k \bigg(
    \NORM{\uv}{k+1,\Omega} +
    \NORM{\ps}{k,\Omega} +
    \NORM{\psi}{k+1,\Omega} +
    \NORM{\uv}{k+1,\Omega}\,\NORM{\psi}{k+1,\Omega} +
    \NORM{\psi}{2,\Omega}\,\NORM{\uv}{k+1,\Omega}
    \nonumber \\
    & \qquad
    \NORM{\gs}{k-1,\Omega} +
    \NORM{\kappa}{k-1,\Omega} +
    \NORM{\psi}{k+1,\Omega} +
    \NORM{\fv}{k-1,\Omega}
    \bigg)
    \nonumber \\
    & \qquad +
    \Big( \frac{5\East\Css{1 \hookrightarrow 4}^2}{4}\,\NORM{\nabla\uv}{\Omega} +
    \Css{P}^2 \East\kappa^\ast +
    \frac{\Css{P}\abs{\Omega}^{1/2}\East\kappa^\ast}{4\Css{inv}}
    \Big)\NORM{\nabla(\psih-\psiI)}{\Omega}.
  \end{align}
  Employing Lemma \ref{temp}, the Sobolev embedding theorem and the
  assumption \eqref{cgtcon}, yields
  \begin{align}
    \normiii{(\uvh-\uvI, \psh-\psI)}
    &\leq \Cs\hh^k \bigg(
    \NORM{\uv}{k+1,\Omega} +
    \NORM{\ps}{k,\Omega} +
    \NORM{\psi}{k+1,\Omega} +
    \NORM{\uv}{k+1,\Omega}\,\NORM{\psi}{k+1,\Omega} +
    \NORM{\psi}{2,\Omega}\,\NORM{\uv}{k+1,\Omega}
    \nonumber \\
    & \qquad
    \NORM{\gs}{k-1,\Omega} +
    \NORM{\kappa}{k-1,\Omega} +
    \NORM{\psi}{k+1,\Omega} +
    \NORM{\fv}{k-1,\Omega}
    \bigg).
    \label{cerr2}
\end{align}
Thus, the estimate \eqref{converge0} can be easily obtained by
combining the estimates \eqref{cerr2} and \eqref{errtemp} and the
interpolant estimate \eqref{projlll2}.
\end{proof}

%% \subsection{\MGT{Discussion of the smallness condition}}
\subsection{Discussion of the smallness condition}
\label{sec:smallness}

The convergence estimate of Proposition~\ref{convergence} requires
condition~\eqref{cgtcon}, which demands that the discrete inf-sup
constant $\beta_\ast$ be strictly larger than a quantity that depends
on the viscosity $\mu$, the permittivity $\epsilon$, the nonlinearity
parameter $\kappa^\ast$, the velocity bound $\Css{u}$, the Poincar\'e
constant $\Css{P}$, the Sobolev embedding constant
$\Css{1\hookrightarrow 4}$, and the inverse inequality constant
$\Css{inv}$.
In this subsection, we discuss the origin, the practical
restrictiveness, and the indirect numerical verification of this
condition.

\medskip
%% Origin
The error analysis of the coupled Stokes-Poisson-Boltzmann system
introduces a cross-coupling between the velocity-pressure error and
the potential error.
Specifically, the Stokes error bound (cf.~the estimate preceding
\eqref{cerr2}) contains a term proportional to $\NORM{\nabla(\psih -
  \psiI)}{\Omega}$, arising from the dependence of the Stokes bilinear
form on the potential field.
Through Lemma~\ref{temp}, this potential error in turn depends on the
velocity error $\NORM{\nabla(\uvI - \uvh)}{\Omega}$, creating a
feedback loop.
Condition~\eqref{cgtcon} ensures that the coupling coefficient is
strictly smaller than $\beta_\ast$, so that the feedback can be
resolved and the velocity-pressure error can be bounded purely in
terms of interpolation errors.

\medskip
%% Context in the literature
Smallness conditions of this type are a standard feature of the
analysis of nonlinearly coupled systems.
In the finite element analysis of the same Stokes-Poisson-Boltzmann
model, AlSohaim et al.~\cite{alsohaim2025analysis} require three
analogous conditions~\cite[conditions (2.11), (2.12),
  and~(3.3)]{alsohaim2025analysis}, involving generic Sobolev and
stability constants that are likewise not explicitly computable in
closed form.
Similar assumptions appear in the analysis of stabilized methods for
related coupled problems~\cite{mishra2024unified,mishra2025equal}.
In all these references, the conditions are stated as abstract
inequalities and verified only indirectly through the numerical
experiments.

\medskip
%% Qualitative interpretation
Although condition~\eqref{cgtcon} cannot be verified algebraically for
specific parameter values (due to the presence of generic constants
such as $\beta_\ast$ and $\Css{inv}$), its dependence on the physical
parameters can be interpreted qualitatively.
The subtracted quantity in~\eqref{cgtcon} scales as
$\mu/(\gamma_\ast\epsilon)$ times a combination of $\Css{u}$ and
$\kappa^\ast$.
Therefore, the condition becomes more restrictive when:
\begin{description}[nosep]
\item $(i)$ the ratio $\mu/\epsilon$ is large, corresponding to regimes where
viscous effects dominate over electrostatic screening;
\item $(ii)$ the nonlinearity strength $\kappa^\ast$ is large, which occurs
when the ionic activity coefficients $\alpha_0$ and $\alpha_1$ produce
strong charge densities;
\item $(iii)$ the velocity bound $\Css{u}$ is large, which by~\eqref{velv}
happens when the external forces $\fv$ and the source term $\gs$ are
large relative to the viscosity.
\end{description}
Conversely, the condition is more easily satisfied for moderate
physical parameters and small data.

\medskip
%% Indirect numerical verification
For the parameter regimes considered in the numerical experiments of
Section~\ref{sec-6}, several factors contribute to the expectation
that condition~\eqref{cgtcon} is satisfied with a comfortable margin.
In Example~\ref{case1}, the moderate parameter values $\mu = 1$,
$\kappa = 1$, $\alpha_0 = \alpha_1 = 1$, and $\Ev = [0,-1]^T$ yield a
regime where the coupling effects are mild.
In Example~\ref{case3}, although $\mu/\epsilon = 0.1/0.075 \approx
1.33$ is not particularly small, the very small ionic activity
coefficient $\alpha_0 = 0.001$ produces a correspondingly small
nonlinearity parameter $\kappa^\ast$, significantly reducing the
subtracted quantity in~\eqref{cgtcon}.
The numerical experiments provide two forms of indirect evidence that
the condition holds.
First, the fixed-point iteration converges consistently within
$7$--$8$ iterations across all mesh sizes and polynomial orders $k=1$
and $k=2$, with no sign of deterioration under mesh refinement.
Second, the computed convergence rates closely match the theoretical
prediction $\mathcal{O}(h^k)$ on all mesh types, including non-convex
polygons and meshes with hanging nodes.
If condition~\eqref{cgtcon} were violated or only marginally
satisfied, one would expect either divergence of the fixed-point
scheme, degradation of convergence rates on finer meshes, or
sensitivity to the mesh type, none of which is experimentally
observed.

% Local Variables:
% mode: latex
% End:
%% Hey Emacs, this is -*-latex-*-

\section{Numerical results}
\label{sec-5}

Hereafter, we investigate the practical behavior of the proposed
method \eqref{svem} through three numerical examples that validate the
theoretical estimates derived in Section \ref{theo}.
In these test cases, we consider convex and non-convex domains,
including meshes with hanging nodes, demonstrating the geometric
flexibility of the proposed method.
%%
%% We also demonstrate the flexibility of the proposed method by working
%% with meshes with hanging nodes.
%% %%
%% \begin{figure}[h]
%%   \centering
%%   \subfloat[$\Omega_1$]{\includegraphics[height=3cm, width=3cm]{./hex5.png}}~~~~~
%%   \subfloat[$\Omega_2$]{\includegraphics[height=3cm, width=3cm]{./ncv_5.png}}
%%   \subfloat[$\Omega_3$]{\includegraphics[height=3cm, width=4.5cm]{./Lshaped.png}}\\
%%   \subfloat[$\Omega_4$]{\includegraphics[height=4cm, width=4cm]{./mix15.png}}~~
%%   \caption{Convex and non-convex domains with general polygons including hanging nodes $\Omega_4$}
%%   \label{samp} 
%% \end{figure} 

\begin{figure}[h]
  \centering
  \subfloat[$\Omega_1$]{%
    \includegraphics[height=3cm, width=3cm]{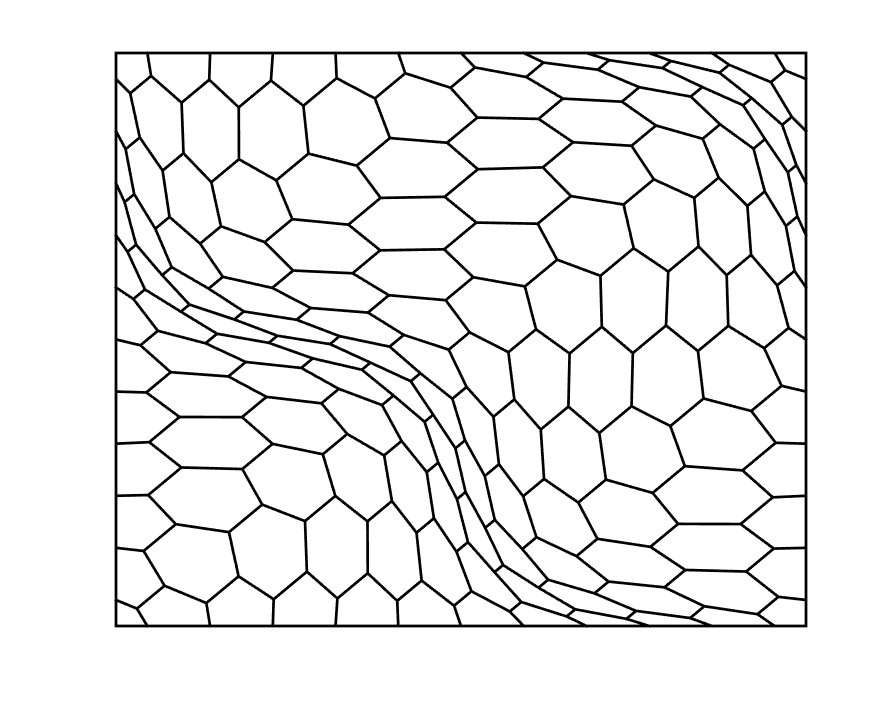}
  }\hspace{0.4cm}
  \subfloat[$\Omega_2$]{%
    \includegraphics[height=3cm, width=3cm]{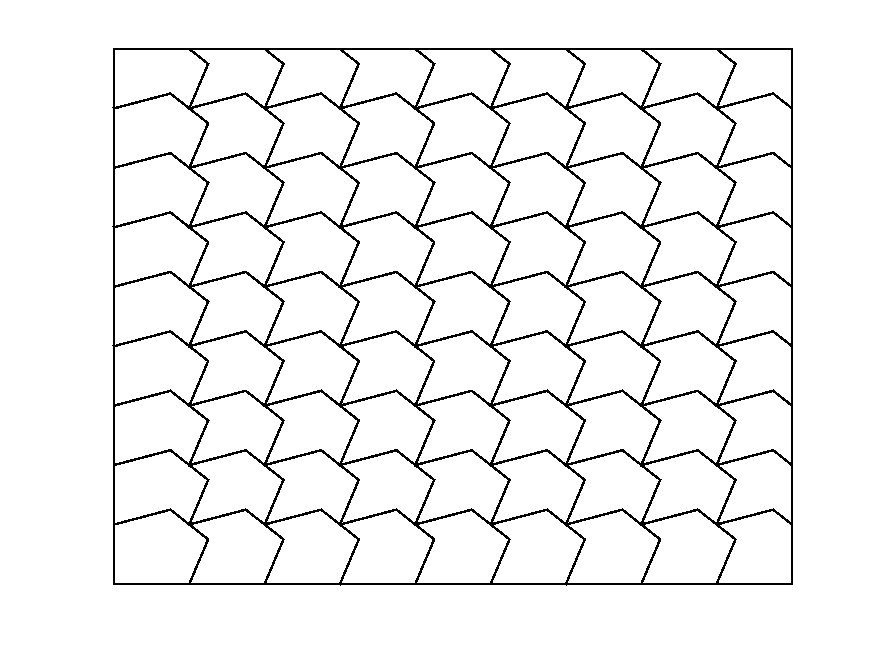}
  }%\hspace{0.05cm}
  \subfloat[$\Omega_3$]{%
    \includegraphics[height=3cm, width=4.cm]{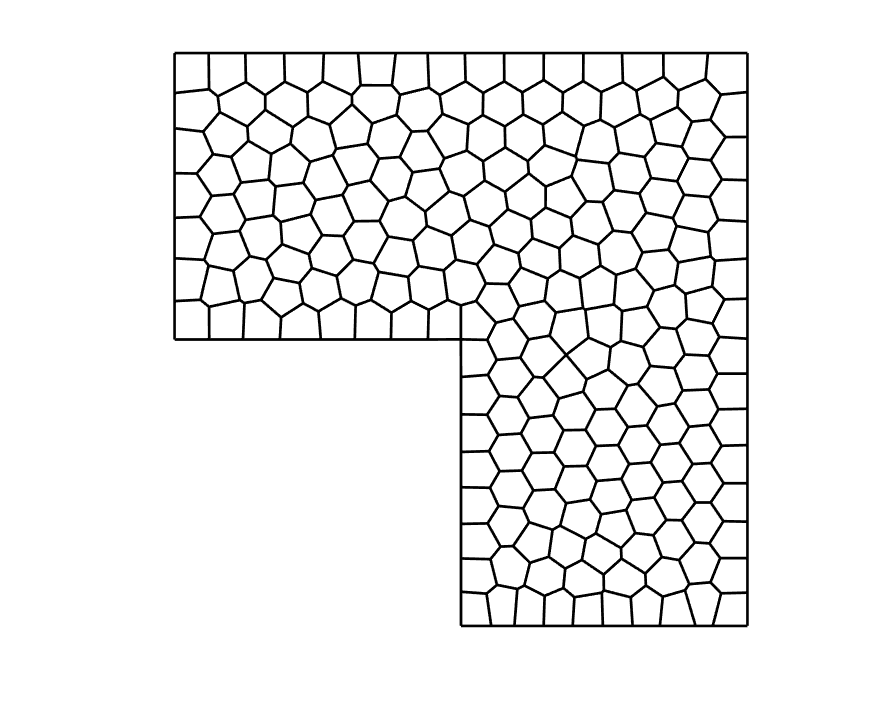}
  }%\hspace{0.05cm}
  \subfloat[$\Omega_4$]{%
    \includegraphics[height=3cm, width=3cm]{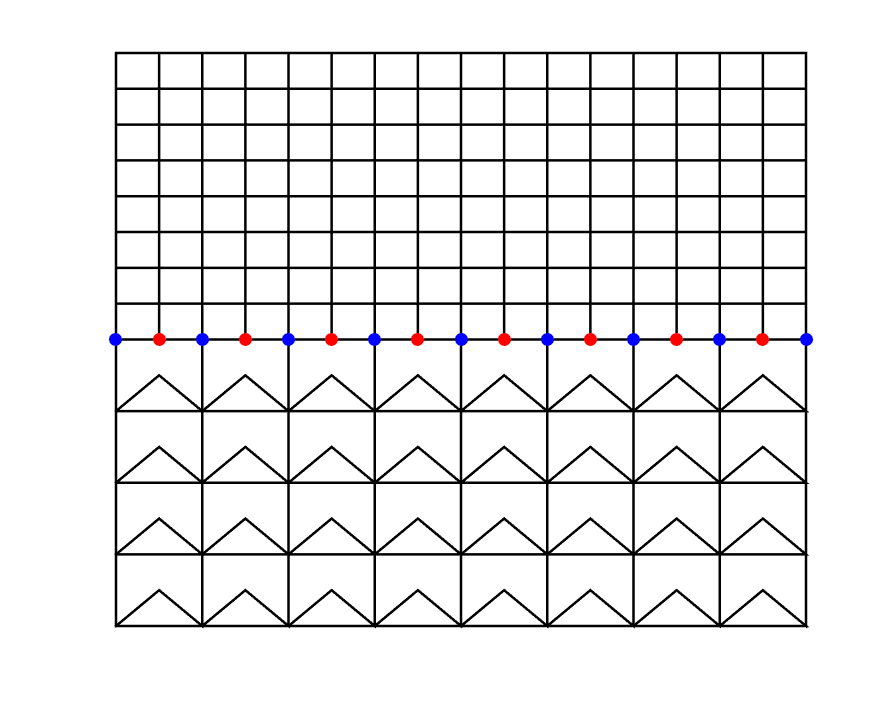}
  }
  \caption{Convex and non-convex domains with general polygons including hanging nodes $\Omega_4$.}
  \label{samp}
\end{figure}

%% \subsection{Preliminaries}
%% \label{sec-error}
%%
%% We also demonstrate the flexibility of the proposed method by working
%% with meshes with hanging nodes.
%%
All computations employ VEM orders $k=1$ and $k=2$ to confirm optimal
convergence for different polynomial degrees, and a fixed-point
iterative scheme for solving the nonlinear stabilized virtual element
problem \eqref{svem} with stopping threshold equal to $10^{-6}$.
%%
%% We present numerical results that validate our theoretical estimate
%% derived in Section \ref{theo} for convex and non-convex domains,
%% including the meshes having hanging nodes for VEM order $k=1$ and
%% $k=2$.
%%
Following Lemma \ref{dwell1}, stabilization parameters are chosen as
$\tau_{E} \sim h^2_E$ and $\delta_{E} \sim h_E$.
For domains $\Omega_1$, $\Omega_2$, and $\Omega_4$ on the unit square,
we employ mesh sizes $\hh=1\slash{\NMB}$, with $\NMB\in\{5, 10, 20,
40, 80\}$, while the L-shaped domain $\Omega_3$ uses the sequence
$\NMB\in\{4, 8, 16, 32, 64\}$.
A sample of computational meshes is shown in Figure \ref{samp}.
%%
%% We utilize the following mesh sizes:
%% %%
%% \begin{itemize}
%% \item For $\Omega_1$, $\Omega_2$, and $\Omega_4$, we use $h:=1/5,
%%   1/10, 1/20, 1/40, 1/80$;
%% \item For $\Omega_3$, we use $h:=1/4, 1/8, 1/16, 1/32, 1/64$;
%% \end{itemize}
%% %%
%% Following Lemma \ref{dwell1}, the stabilization parameters are chosen
%% as follows: $\tau_{E} \sim h^2_E$ and $\delta_{E} \sim h_E$.

We quantify the absolute errors in the $H^1(\Omega)$ semi-norm for the
velocity field and the $L^2$-norm for the pressure field as follows:
\begin{align}
  E^{\mathbf{u}}_{H^1}
  := {\sqrt{\sum\limits_{E\in\Omega_h}  \| \nabla (\mathbf{u} - \boldsymbol{\Pi}^{\nabla,E}_k \mathbf{u}_h ) \|_E^2}}, \qquad \qquad  E^p_{L^2} :=\sqrt{\sum\limits_{E\in\Omega_h} \| p- {\Pi}^{0,E}_k p_h\|^2_E}, \nonumber 
\end{align}
where the exact and discrete velocities are denoted by $\mathbf{u}$
and $\mathbf{u}_h$, and $p$ and $p_h$ for exact and discrete pressure
field, and $\boldsymbol{\Pi}^{\nabla,E}_k$ and
$\boldsymbol{\Pi}^{0,E}_k$ are the elliptic and $L^2$-orthogonal
projectors discussed in Section~\ref{subsec-32:projectors}.
Following the above notations, we use $E^{\psi}_{H^1}$ to represent
{the absolute error in the $H^1$ semi-norm for the potential field.}

%% SECTION 5.1
\subsection{Example 1: Convergence studies on convex domains}
\label{case1}

In this first example, we consider a manufactured solution with smooth
velocity, pressure, and potential fields to validate optimal
convergence rates on various polygonal meshes.
The parameters defining problem \ref{P} are:
%%
%% In this example, we choose the following parameters for
%% Problem~\ref{P}:
%%
\begin{align}
  \mu = 1,\qquad\kappa=1,\qquad\alpha_0=\alpha_1=1,\qquad\mathbf{E}=[0,-1]^T.
  \nonumber 
\end{align}
Let $\xi(x,y):= x^3y^3(1-x)^3(1-y)^3$ define an auxiliary function.
Then, we consider the exact solution fields:
\begin{empheq}[left= \empheqlbrace]{align}
  \uv (x,y) &= {\Big[\frac{d\xi}{dy},  -\frac{d\xi}{dx} \Big]^T},\nonumber \\
  \ps (x,y) &= \sin(\pi x) \cos(\pi x) +p_0, \nonumber\\
  \psi(x,y) &= x^2 y^2(x-1)(y-1),  \nonumber 
\end{empheq}
where $p_0$ is the pressure constant ensuring the zero-mean pressure
constraint according to~\eqref{heat-2}.
The load terms and the boundary conditions are derived accordingly.

%% then the loads given
%% by the exact solution of problem $(P)$:
%% %%
%% \begin{empheq}[left= \empheqlbrace]{align}
%%   \mathbf{u}(x,y) &=\Big[\frac{d\xi}{dy},  -\frac{d\xi}{dx} \Big],\nonumber \\
%%   p(x,y) &= \sin(\pi x) \cos(\pi x) +p_0, \nonumber\\
%%   \phi(x,y) &= x^2 y^2(x-1)(y-1),  \nonumber 
%% \end{empheq}
%% %%
%% where $p_0$ is a constant obtained by \eqref{heat-2}.

Tables \ref{ex1_tb1}--\ref{ex1_tb3} present numerical errors and
convergence rates on the unit square using computational meshes
$\Omega_1$ (distorted hexagons), $\Omega_2$ (non-convex polygons), and
$\Omega_4$ (composite mesh with non-convex polygons and hanging
nodes).
%%
%% We present our numerical results on the unit square using the
%% computational meshes $\Omega_1$, $\Omega_2$, and $\Omega_4$.
%% %%
%% The numerical errors and convergence rates for each field are shown in
%% Tables \ref{ex1_tb1}--\ref{ex1_tb3} for VEM order $k=1$ and $k=2$.
%%
These numerical results confirm that the stabilized VEM achieves the
optimal theoretical convergence rates derived in Section \ref{theo},
which are $\mathcal{O}(h)$ for $k=1$ and $\mathcal{O}(h^2)$ for $k=2$
for the three solution fields, i.e., velocity, pressure and potential.
%% for every solution field for VEM order $k=1$ and $k=2$, respectively.

Additionally, we remark that the meshes of the mesh family $\Omega_4$
are constructed through the composition of two non-matching meshes,
thus leading to hanging nodes at the shared interface.
Table \ref{ex1_tb3} shows that optimal convergence is maintained
without special treatment of these hanging nodes, confirming that this
method naturally handles possible mesh non-conformities.
%%
%% Additionally, we remark that to demonstrate the property of the
%% proposed method, we have investigated the performance of the proposed
%% method on the composition of two non-matching meshes, which leads to
%% hanging nodes.
%%
%% The obtained results are shown in Table \ref{ex1_tb3}, validating the
%% theoretical estimate. Thus, the proposed method easily incorporates
%% the hanging nodes.

\begin{table}[h]
  \centering
  \caption{[Example 1] Convergence studies on distorted hexagons
    $\Omega_1$ using proposed method}
  \label{ex1_tb1}
  \begin{tabular*}{.9\linewidth}{@{}lllllllll@{}}
    \toprule
    order & $h$ & $E^\mathbf{u}_{H^1}$ & rate & $E^p_{L^2}$ & rate & $E^{\psi}_{H^1}$ & rate & {itr} \\
    \midrule
    & 1/5  & 1.311720e-02 & --    & 2.449411e-02 & --    & 8.705857e-03 & --    & 7 \\
    & 1/10 & 6.428537e-03 & 1.029 & 1.221762e-02 & 1.004 & 4.577561e-03 & 0.927 & 7 \\
    {$k=1$} & 1/20 & 3.232437e-03 & 0.992 & 6.103906e-03 & 1.001 & 2.345729e-03 & 0.965 & 7 \\
    & 1/40 & 1.623956e-03 & 0.993 & 3.052641e-03 & 1.000 & 1.187052e-03 & 0.983 & 7 \\
    & 1/80 & 8.136887e-04 & 0.997 & 1.527061e-03 & 0.999 & 5.970562e-04 & 0.991 & 8 \\
    \midrule
    & 1/5  & 2.681120e-02 & --    & 4.503502e-02 & --    & 8.103336e-04 & --    & 7 \\
    & 1/10 & 6.049456e-03 & 2.418 & 1.181225e-02 & 1.931 & 2.221737e-04 & 1.867 & 7 \\
    {$k=2$} & 1/20 & 1.533453e-03 & 1.980 & 3.019989e-03 & 1.968 & 5.772350e-05 & 1.945 & 7 \\
    & 1/40 & 3.893513e-04 & 1.978 & 7.607732e-04 & 1.989 & 1.468018e-05 & 1.975 & 7 \\
    & 1/80 & 9.810407e-05 & 1.989 & 1.910031e-04 & 1.994 & 3.699626e-06 & 1.988 & 8 \\
    \bottomrule
  \end{tabular*}
\end{table}

\begin{table}[h]
  \centering
  \caption{[Example 1] Convergence studies on non-convex mesh
    $\Omega_2$ using proposed method}
  \label{ex1_tb2}
  \begin{tabular*}{.9\linewidth}{@{}lllllllll@{}}
    \toprule
    order & $h$ & $E^\mathbf{u}_{H^1}$ & rate & $E^p_{L^2}$ & rate & $E^{\psi}_{H^1}$ & rate & {itr} \\
    \midrule
    & 1/5  & 2.002868e-02 & --    & 3.248449e-02 & --    & 1.051645e-02 & --    & 7 \\
    & 1/10 & 8.161245e-03 & 1.295 & 1.427017e-02 & 1.187 & 5.393170e-03 & 0.963 & 7 \\
    {$k=1$} & 1/20 & 3.621136e-03 & 1.172 & 6.640047e-03 & 1.106 & 2.730142e-03 & 0.982 & 7 \\
    & 1/40 & 1.722770e-03 & 1.072 & 3.201921e-03 & 1.052 & 1.373356e-03 & 0.991 & 7 \\
    & 1/80 & 8.462576e-04 & 1.026 & 1.527061e-03 & 1.024 & 6.887283e-04 & 0.996 & 8 \\
    \midrule
    & 1/5  & 1.434075e-02 & --    & 2.543417e-02 & --    & 1.241960e-03 & --    & 7 \\
    & 1/10 & 3.361010e-03 & 2.093 & 6.199488e-03 & 2.034 & 3.220434e-04 & 1.947 & 7 \\
    {$k=2$} & 1/20 & 8.330459e-04 & 2.012 & 1.548879e-03 & 2.001 & 8.184041e-05 & 1.976 & 7 \\
    & 1/40 & 2.077321e-04 & 2.004 & 3.889136e-04 & 1.994 & 2.061589e-05 & 1.989 & 7 \\
    & 1/80 & 5.180141e-05 & 2.004 & 9.781639e-05 & 1.991 & 5.172670e-06 & 1.995 & 8 \\
    \bottomrule
  \end{tabular*}
\end{table}

\begin{table}[h]
  \centering
  \caption{[Example 1] Convergence studies {on a configuration with hanging nodes $\Omega_4$} using proposed method}
  \label{ex1_tb3}
  \begin{tabular*}{.9\linewidth}{@{}lllllllll@{}}
    \toprule
    order & $h$ & $E^\mathbf{u}_{H^1}$ & rate & $E^p_{L^2}$ & rate & $E^{\psi}_{H^1}$ & rate & {itr} \\
    \midrule
    & 1/5  & 1.700467e-02 & --    & 3.324558e-02 & --    & 7.411002e-03 & --    & 7 \\
    & 1/10 & 6.418063e-03 & 1.406 & 8.366519e-03 & 1.991 & 3.720487e-03 & 0.994 & 7 \\
    {$k=1$} & 1/20 & 2.918559e-03 & 1.137 & 3.066089e-03 & 1.448 & 1.861992e-03 & 0.999 & 7 \\
    & 1/40 & 1.411469e-03 & 1.048 & 1.542844e-03 & 0.991 & 9.312391e-04 & 1.000 & 7 \\
    & 1/80 & 6.982803e-04 & 1.015 & 7.846744e-04 & 0.975 & 4.656609e-04 & 1.000 & 8 \\
    \midrule
    & 1/5  & 1.853717e-02 & --    & 3.539204e-02 & --    & 6.973699e-04 & --    & 7 \\
    & 1/10 & 4.086304e-03 & 2.182 & 7.855160e-03 & 2.172 & 1.755867e-04 & 1.990 & 7 \\
    {$k=2$} & 1/20 & 9.677780e-04 & 2.078 & 1.888270e-03 & 2.057 & 4.397411e-05 & 1.998 & 7 \\
    & 1/40 & 2.349881e-04 & 2.042 & 4.751719e-04 & 1.991 & 1.099864e-05 & 1.999 & 7 \\
    & 1/80 & 5.806576e-05 & 2.017 & 1.270720e-04 & 1.903 & 2.750029e-06 & 2.000 & 8 \\
    \bottomrule
  \end{tabular*}
\end{table}

\begin{table}[h]
  \centering
  \caption{[Example 2] Convergence studies on L-shaped domain with
    Voronoi mesh using proposed method}
  \label{ex2_tb1}
  \begin{tabular*}{.9\linewidth}{@{}lllllllll@{}}
    \toprule
    order & $h$ & $E^\mathbf{u}_{H^1}$ & rate & $E^p_{L^2}$ & rate & $E^{\psi}_{H^1}$ & rate & {itr} \\
    \midrule
    & 1/4  & 5.412799e-03 & --    & 1.761315e-02 & --    & 1.240271e-02 & --    & 5 \\
    & 1/8  & 2.089460e-03 & 1.438 & 4.781077e-03 & 1.970 & 6.441351e-03 & 0.990 & 6 \\
    {$k=1$} & 1/16 & 7.826530e-04 & 1.485 & 1.341547e-03 & 1.921 & 3.300835e-03 & 1.011 & 6 \\
    & 1/32 & 3.606618e-04 & 1.014 & 5.436234e-04 & 1.182 & 1.474711e-03 & 1.055 & 6 \\
    & 1/64 & 1.644670e-04 & 1.123 & 2.656548e-04 & 1.030 & 7.382761e-04 & 0.995 & 6 \\
    \midrule
    & 1/4  & 9.673890e-03 & --    & 2.603108e-02 & --    & 1.745202e-03 & --    & 5 \\
    & 1/8  & 1.324590e-03 & 3.005 & 6.156831e-03 & 2.179 & 4.313425e-04 & 2.112 & 6 \\
    {$k=2$} & 1/16 & 2.939265e-04 & 2.276 & 1.583476e-03 & 2.053 & 1.094255e-04 & 2.074 & 6 \\
    & 1/32 & 5.019257e-05 & 2.314 & 3.452908e-04 & 1.994 & 2.263689e-05 & 2.062 & 6 \\
    & 1/64 & 1.248291e-05 & 2.002 & 9.124393e-05 & 1.914 & 5.360583e-06 & 2.072 & 7 \\
    \bottomrule
  \end{tabular*}
\end{table}

\begin{figure}[h]
  \centering
  \subfloat[]{\includegraphics[height=5.33cm, width=4cm]{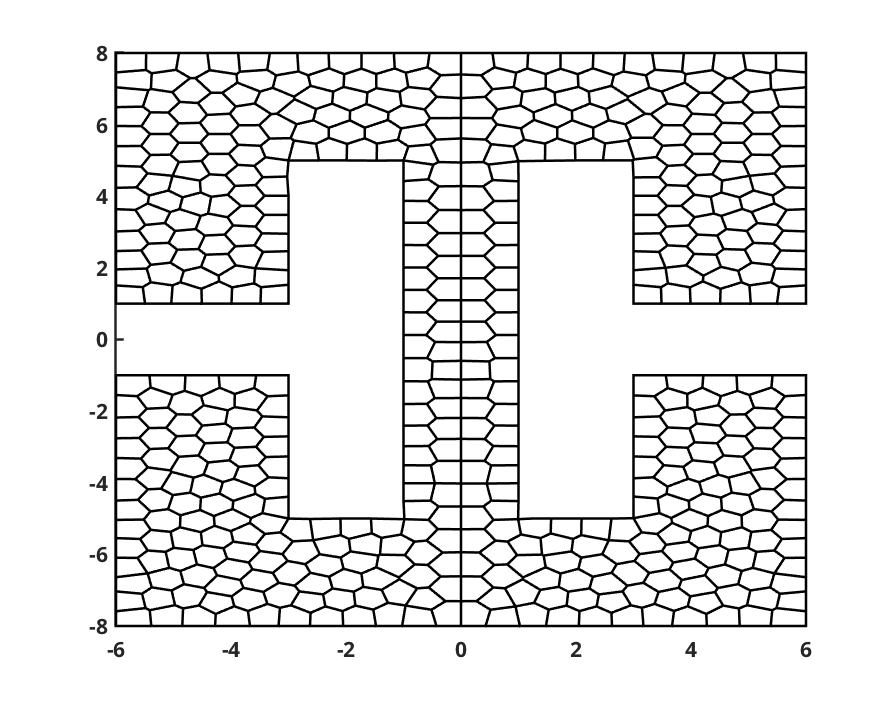}}
  \subfloat[]{\includegraphics[height=4.66cm, width=4cm]{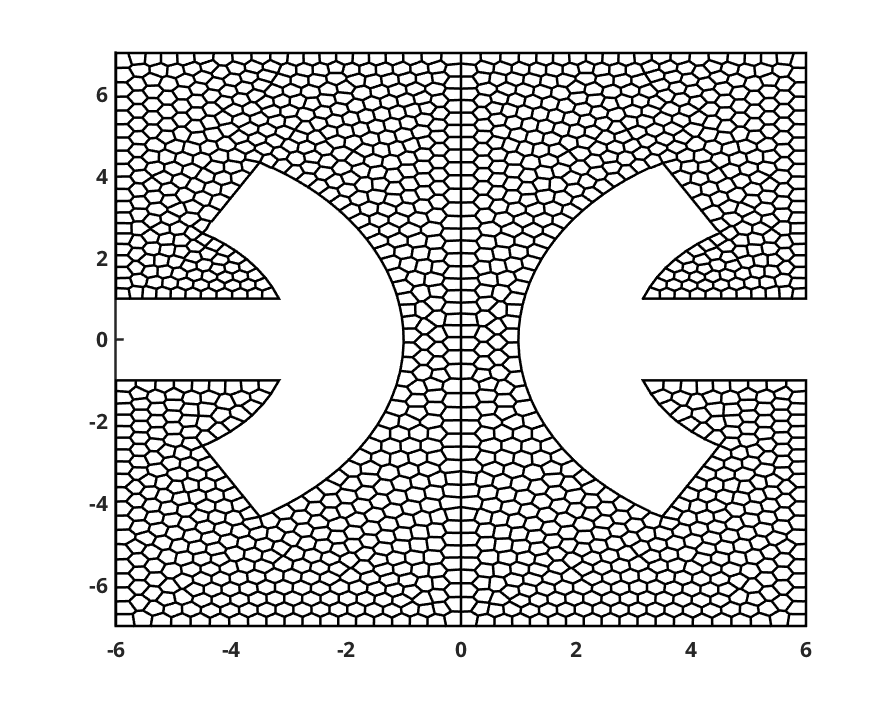}}
  \caption{[Example 3] Computational domains with their geometry.}
  \label{samp2} 
\end{figure}

%% SECTION 5.2
\subsection{Example 2: Convergence studies on non-convex domain}
\label{case2}
In this example, we investigate the convergence behavior of the
proposed method on the non-convex domain with re-entrant {corner} for a
problem with a known solution.
We introduce an L-shaped non-convex domain given by
$(0,1)^2\backslash(0,0.5)^2$, discretized with regular Voronoi mesh,
as shown in Figure \ref{samp}(c).
We consider the same manufactured solution of Example~1.
The problem parameters are the same, with the exception of the
pressure $p_0$, which must be recalculated for the L-shaped domain by
using \eqref{heat-2}.

Table \ref{ex2_tb1} confirms optimal convergence rates
$\mathcal{O}(h)$ for $k=1$ and $\mathcal{O}(h^2)$ for $k=2$ for the
approximation of all fields, velocity, pressure and potential, showing
that the geometric non-convexity does not degrade the theoretical
convergence behavior.
In fact, the Voronoi tessellation maintains the mesh regularity
assumption \textbf{(A1)} that we introduced at the beginning of
Section~\ref{sec-3}.
%% The numerical error and convergence characteristic of the proposed
%% method on the L-shaped domain is demonstrated in Table \ref{ex2_tb1}
%% for VEM orders $k=1$ and $k=2$.
%% %%
%% Once again, we observe that the proposed method exhibits the expected
%% convergence rates as derived theoretically.

%%%%%%%%%%%%%%%%%%%%%%%%%%%%%%%%%%%%%%%%%%%%%%%%%%%%%%%%%%%%%%%%%%%%%%%%%%%%%%%%%%%%%%
%%%%%%%%%%%%%%%%%%%%%%%%%%%%%%%%%%%%%%%%%%%%%%%%%%%%%%%%%%%%%%%%%%%%%%%%%%%%%%%%%%%%%%

%% SECTION 5.3
\subsection{Example 3: Flows in nanopore sensor with obstacles}
\label{case3}

The final example demonstrates the VEM formulation's capability for
simulating realistic electrically charged flows in a nanopore sensor
with T-shaped and curved-shaped obstacles, which is the same
configuration considered
in~\cite{mitscha2017adaptive,alsohaim2025analysis}.
%%
%% Finally, we end our numerical simulation by investigating the
%% electrically charged flows in a nanopore sensor with T-shaped and
%% curved-shaped obstacles following
%% \cite{mitscha2017adaptive,alsohaim2025analysis}.
%%
The geometry and computational meshes are shown in Figure \ref{samp2}
with the sizes 12 nm $\times$ 16 nm (for T-shaped obstacles: (a)) and
12 nm $\times$ 14 nm (for curved shaped obstacles: (b)).
%%
%% We use the same flow configurations, which have been discussed in
%% \cite{alsohaim2025analysis, mitscha2017adaptive}.
%%
An ionic current is driven by an imposed electric potential difference
across the nanopore.
The electrostatic potential \(\psi\) is prescribed on the inflow
(bottom) and outflow (top) boundaries as $\psi_{in}=0$ and
$\psi_{out}=2$, and zero-flux conditions on all other boundaries.
%%
%% On the remaining boundary, we impose a zero-flux condition for the
%% potential.

For the velocity field, on the top boundary we impose the parabolic
 {profile, given by}
\begin{equation*}\label{eq:uin}
  \mathbf{u}_{{in}}(x) \;=\; \Big[ 0, -0.1\,\tanh\!\bigl(40(6-x)^2\bigr) \Big]^T,
  %	\quad\text{on the top boundary},
\end{equation*}
and $\mathbf{u}\cdot \mathbf{n} =0$ on the lateral (right and left)
boundaries, zero-traction conditions at {the bottom}, and no-slip
conditions on the obstacle surfaces and channel walls.
%%
%% On the outer right and left boundaries, we impose $\mathbf{u}\cdot
%% \mathbf{n} =0$. We apply zero-traction boundary condition $(\mu
%% \nabla \mathbf{u} + p \mathbb I) \mathbf{n}=\mathbf{0}$ at the
%% outflow, and on the remaining boundaries, we impose the no-slip
%% condition for the velocity vector field.
%%
In this example, we incorporate the nonlinear convective term in the
momentum equation.
We consider the following parameters
\begin{align*}
  \mu = 0.1\,\, \text{Pa $\cdot$ s}\,\,\qquad
  \epsilon=0.075,\qquad
  \alpha_0=0.001,\qquad
  \alpha_1=1,\qquad
  \mathbf{f}=0, \qquad g=0.
\end{align*}
We investigate the practical behavior of the VEM in the two cases with
$\mathbf{E}=[0.1, -0.1]^T$ and $\mathbf{E}=[1, -1]{^T}.$
In both cases, we employ a Voronoi mesh with 18,000 elements, and VEM
orders $k=1$ and $k=2$, and the externally applied electric field
$\mathbf{E}$ does not act straight down, but with a slight angle to
break symmetry with low and moderate magnitudes.
%% We use 18,000 regular Voronoi elements to conduct this experiment
%% for VEM order $k=1$ and $k=2$.  In both cases, the externally
%% applied electric field $\mathbf{E}$ does not act straight down, but
%% with a slight angle to break symmetry with low and moderate
%% magnitudes.
For $\mathbf{E}=[0.1, -0.1]{^T}$, the stabilized velocity norm, the
stabilized pressure, and the potential distribution in the nanopore
sensor with T-shaped obstacles are shown in Figures \ref{ex3fig1} and
\ref{ex3fig2} for VEM order $k=1$ and $k=2$.
The recirculation region appears near the obstacles due to the
combined effects of viscous drag and electrostatic forces.
The pressure drop is noticed from the inlet to the outlet.
The potential field exhibits smooth gradients from the inflow to
outflow boundary, slightly distorted near the obstacles due to the
applied electric field.
For $k=1$, we observe a singularity for the velocity field near
$(-6,1)$ and $(6,1)$, which is effectively suppressed by employing the
higher order VEM $k=2$ as shown in {Figures~\ref{ex3fig2} and~\ref{ex3fig1r}}.
Additionally, the electro-hydrodynamics flow pattern obtained using
the proposed method is almost identical to that of
\cite{alsohaim2025analysis}.
    
For $\mathbf{E}=[1,-1]{^T}$, the flow pattern in the nanopore sensor
with T-shaped obstacles is shown in Figure \ref{ex3fig3} for VEM order
$k=2$.
We notice that the flow pattern becomes slightly complex as we
increase the magnitude of the applied electric field, and the proposed
method is efficient in capturing the flow dynamics without exhibiting
oscillation or singularities near the corner of the T-shaped
obstacles.

The flow dynamics of the electrically charged fluid in the nanopore
sensor with curved obstacles are shown in Figure \ref{ex3fig4} for
$E=[0.1,-0.1]{^T}$ and $\mathbf{E}=[1,-1]{^T}$ with higher order VEM
$k=2$.
We notice that the flow characteristics are similar to the previous
case, but becomes slightly complex as we increase the magnitude of the
electric field. {Additionally, for a better visualization of the flow, the contour plot of the velocity magnitude and velocity vector field is also shown in Figure~\ref{ex3fig1s}.}
   
\begin{figure}[h]
  \centering
      {\includegraphics[height=6.66cm, width=5cm]{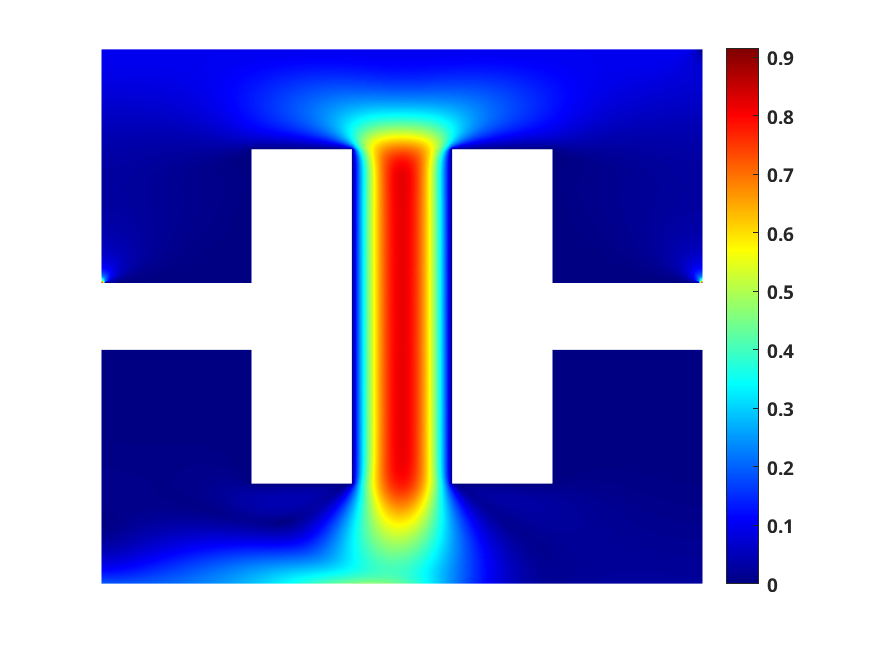}}~~~
      {\includegraphics[height=6.66cm, width=5cm]{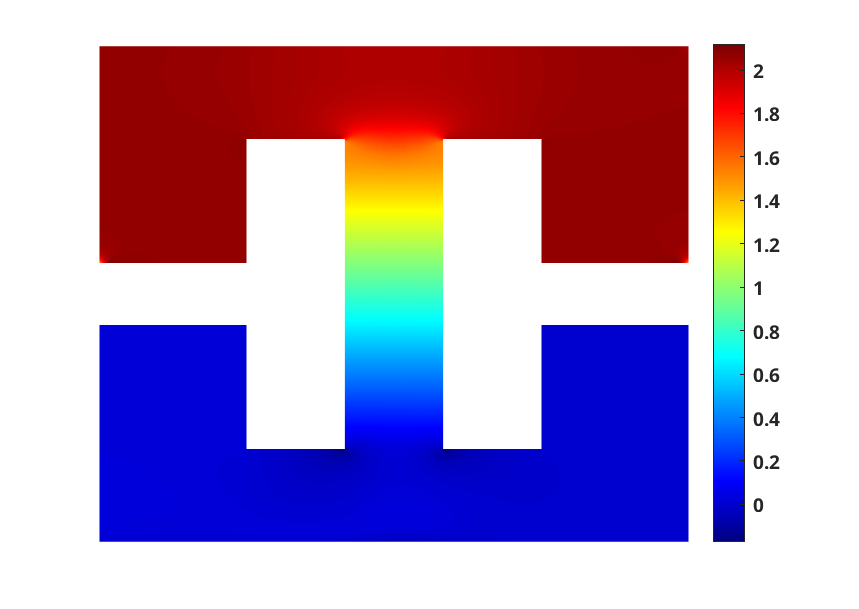}} 
      {\includegraphics[height=6.66cm, width=5cm]{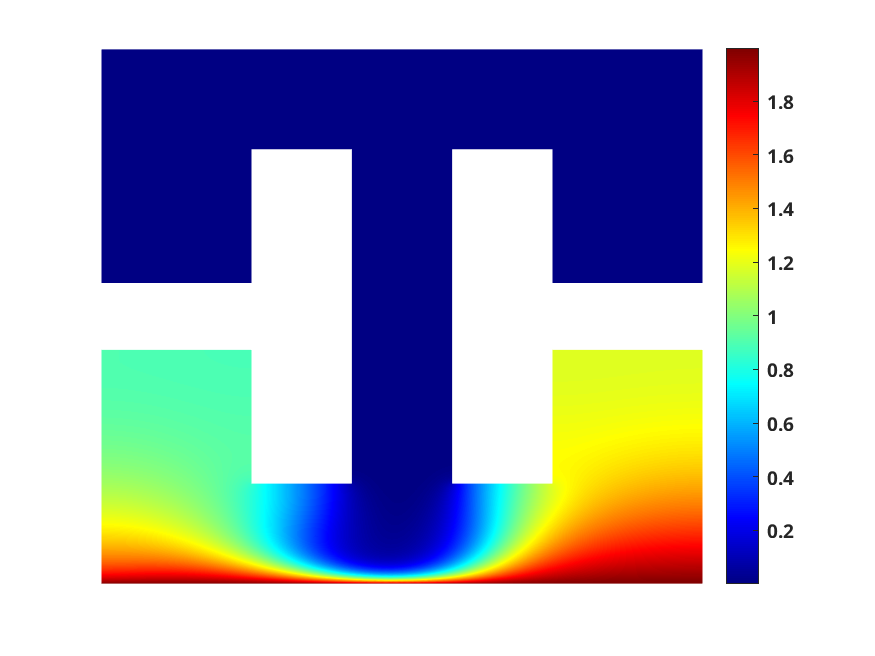}}
      \caption{[Example 3] Snapshots of flow dynamics of the
        electrically charged fluid in a nanosensor: discrete velocity
        magnitude, pressure, and potential for VEM order $k=1$ with
        $\mathbf{E}=[0.1, -0.1]{^T}$}
      \label{ex3fig1} 
\end{figure}

\begin{figure}[h]
  \centering
      {\includegraphics[height=6.66cm, width=5cm]{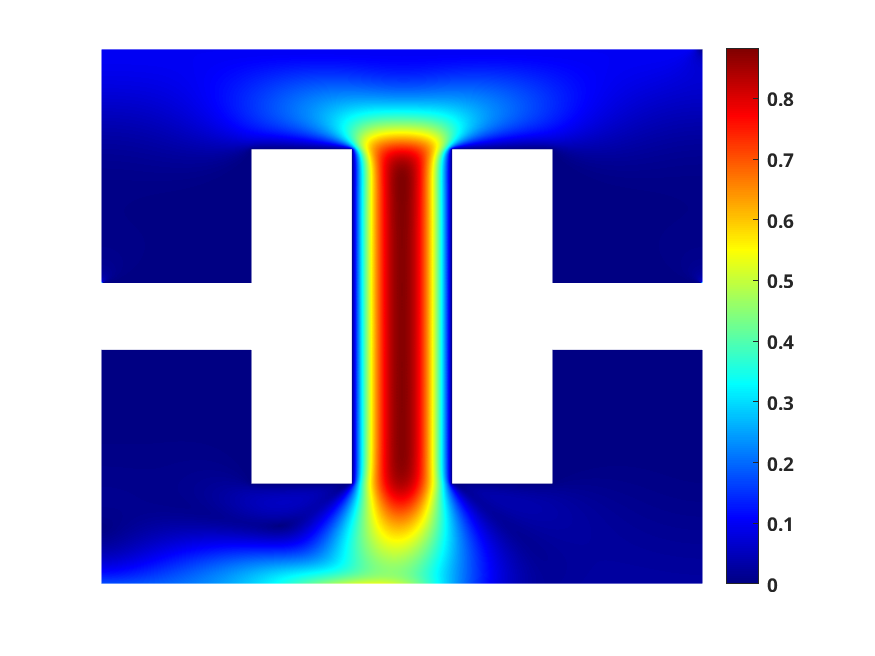}}~~~
      {\includegraphics[height=6.66cm, width=5cm]{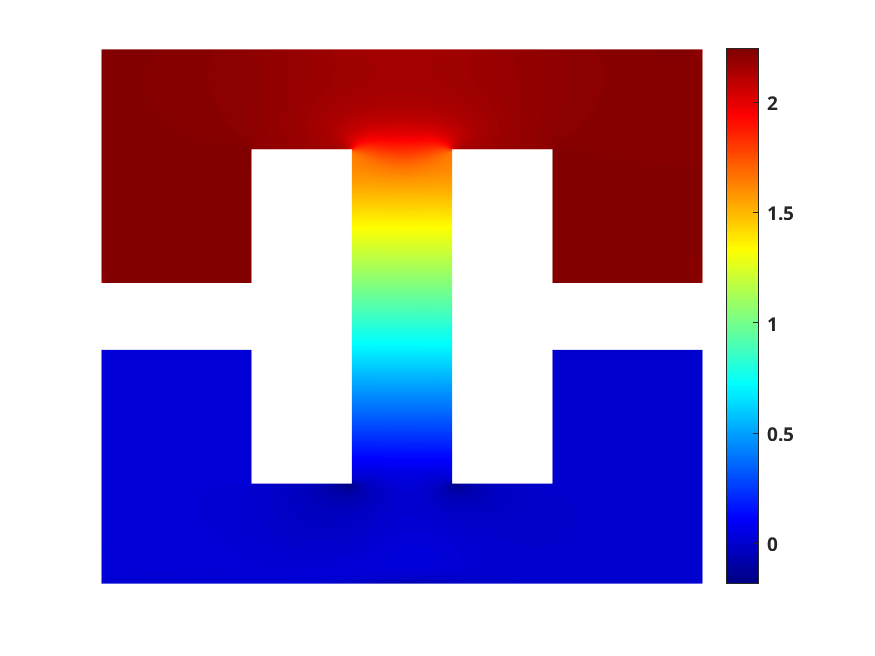}} 
      {\includegraphics[height=6.66cm, width=5cm]{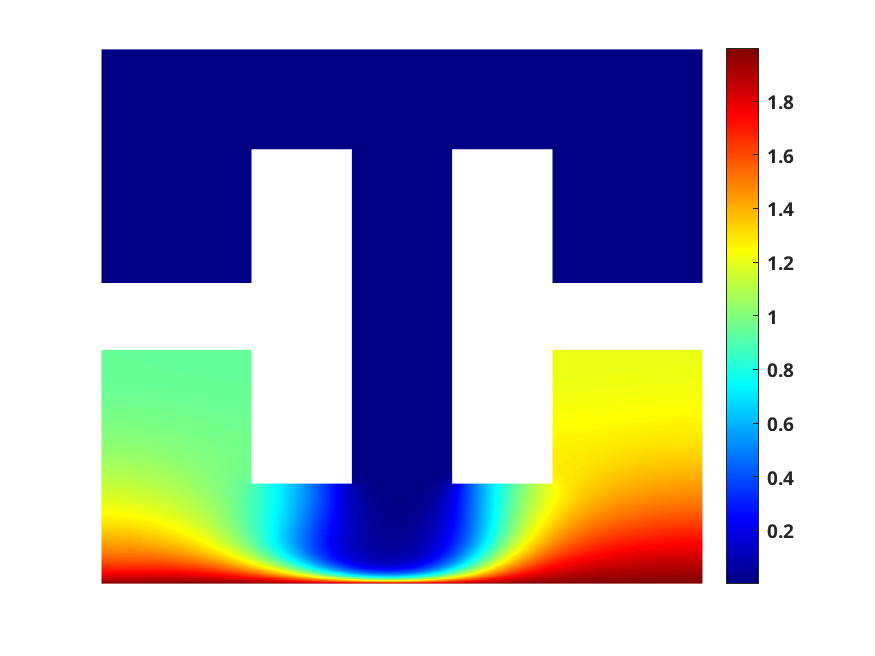}}
      \caption{[Example 3] Snapshots of the flow dynamics of the
        electrically charged fluid in a nanosensor: discrete velocity
        magnitude, pressure, and potential for VEM order $k=2$ with
        $\mathbf{E}=[0.1, -0.1]{^T}$}
\label{ex3fig2} 
\end{figure}

\begin{figure}[h]
	\centering
%	\fcolorbox{black}{blue!50}{%
	{\includegraphics[height=6.66cm, width=7cm]{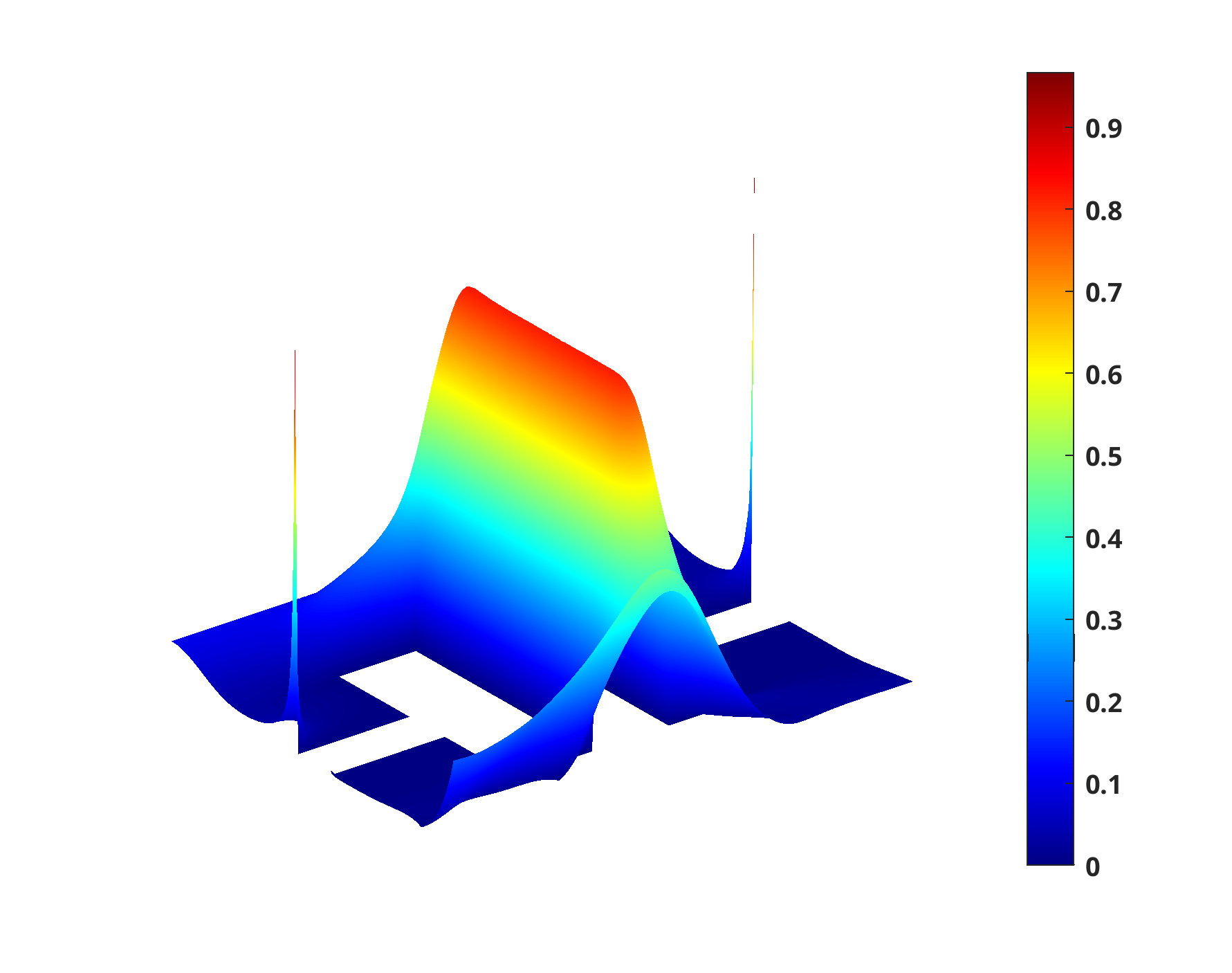}}~~~
	{\includegraphics[height=6.66cm, width=7cm]{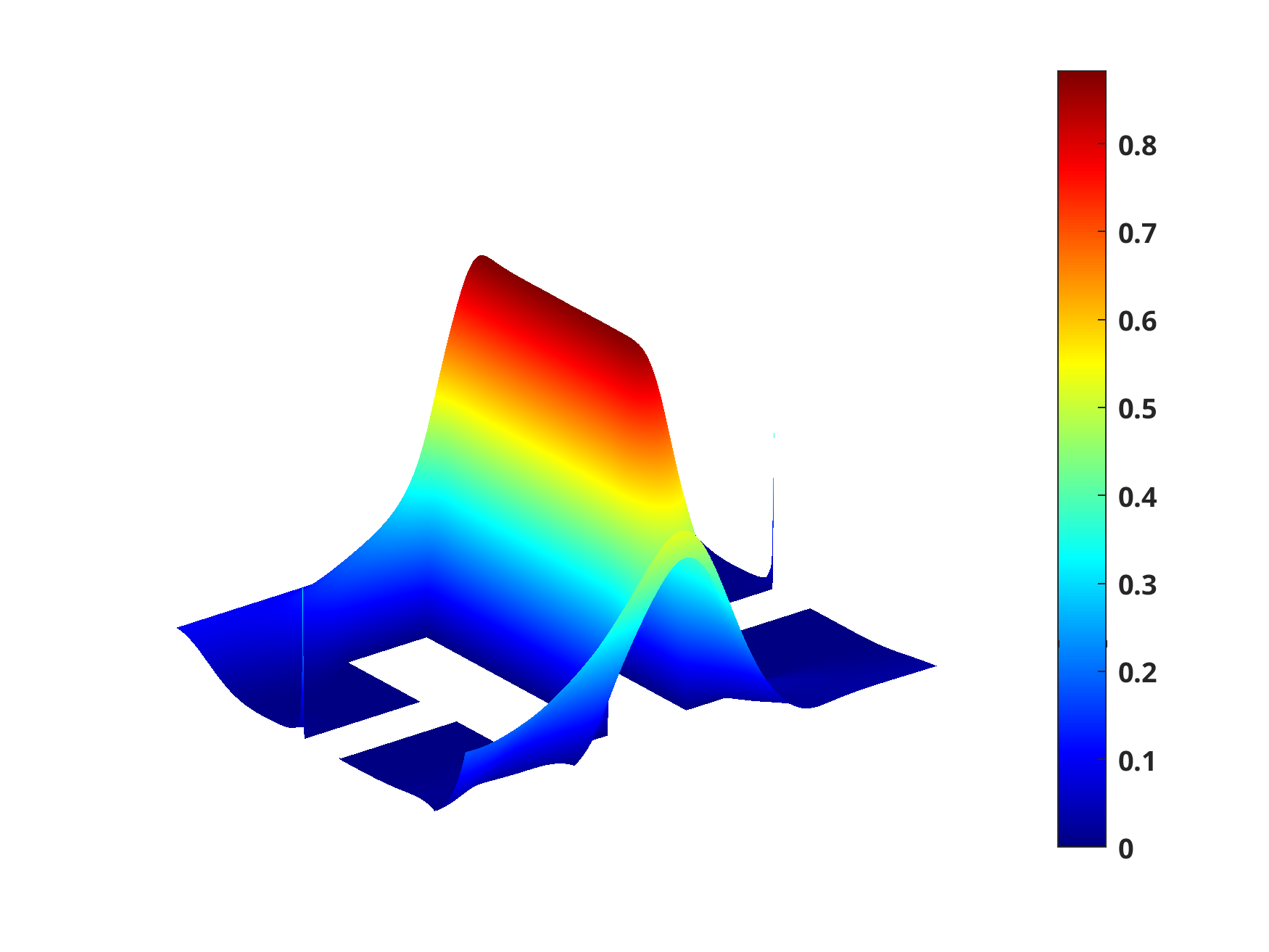}} 
	\caption{{[Example 3] Three-dimensional visualizations of the velocity magnitude in the nanosensor for the virtual element method with polynomial orders $k = 1$ (left) and $k = 2$ (right), corresponding to $\mathbf{E}=[0.1,-0.1]^T$}}
	\label{ex3fig1r} 
\end{figure}

\begin{figure}[h]
  \centering
      {\includegraphics[height=6.33cm, width=5cm]{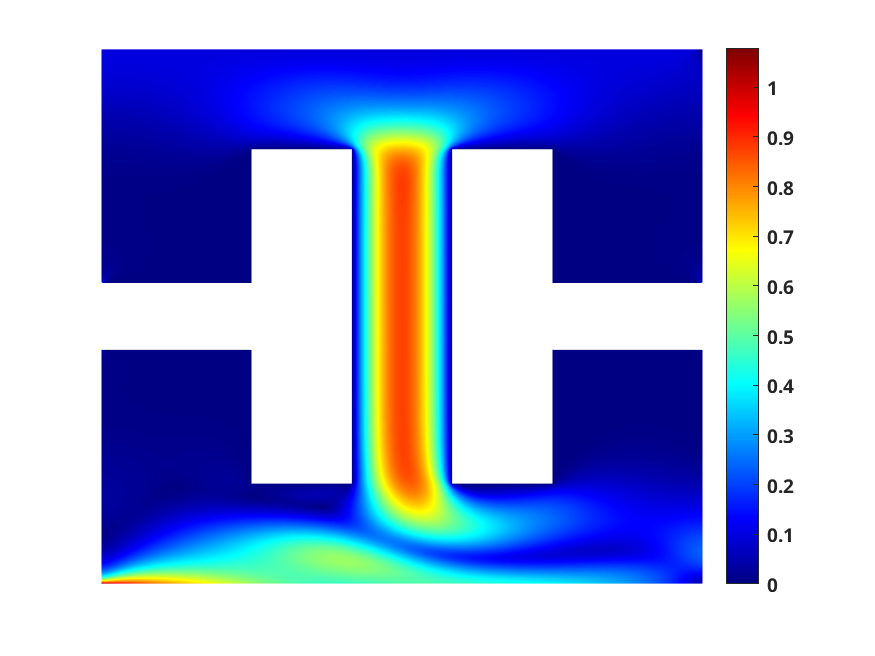}}~~~
      {\includegraphics[height=6.33cm, width=5cm]{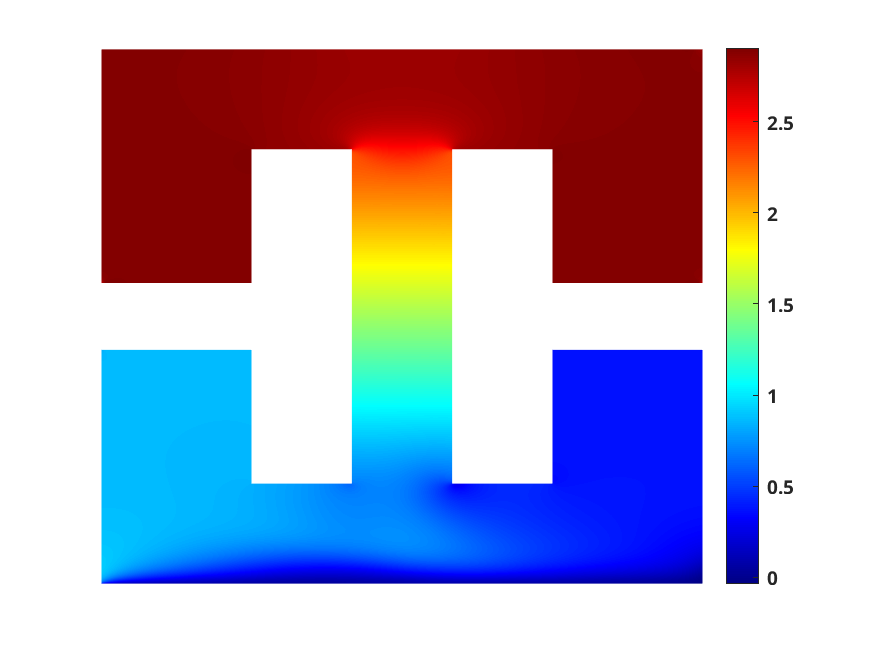}} 
      {\includegraphics[height=6.33cm, width=5cm]{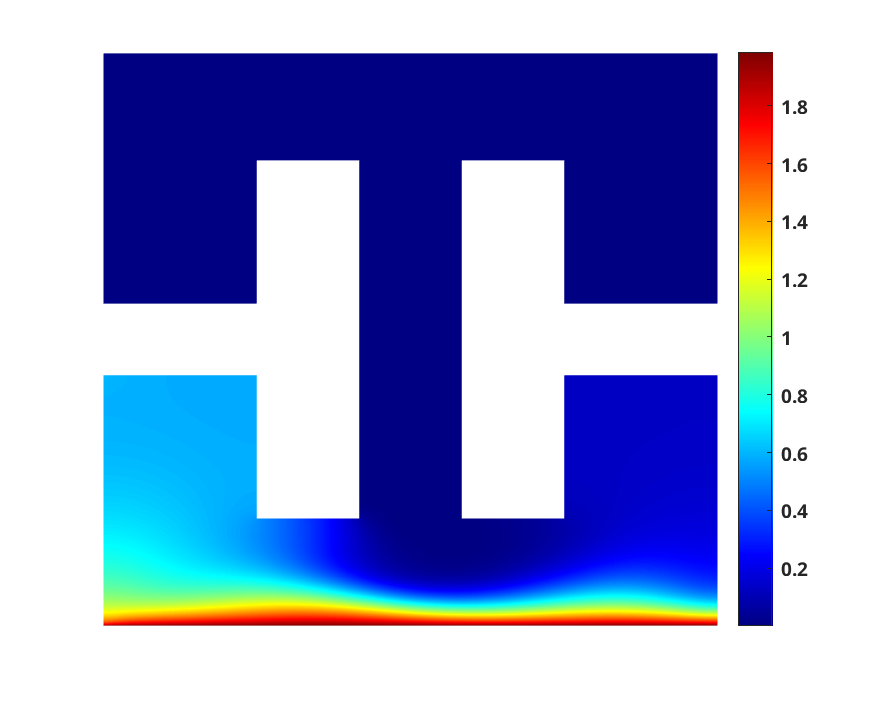}}
      \caption{[Example 3] Snapshots of the flow dynamics of the
        electrically charged fluid in a nanosensor: discrete velocity
        magnitude, pressure, and potential for VEM order $k=2$ with
        $\mathbf{E}=[1, -1]{^T}$.}
      \label{ex3fig3} 
\end{figure}

\begin{figure}[h]
	\centering
	%\fcolorbox{black}{blue!50}{%
	{\includegraphics[height=8.2cm, width=6.5cm]{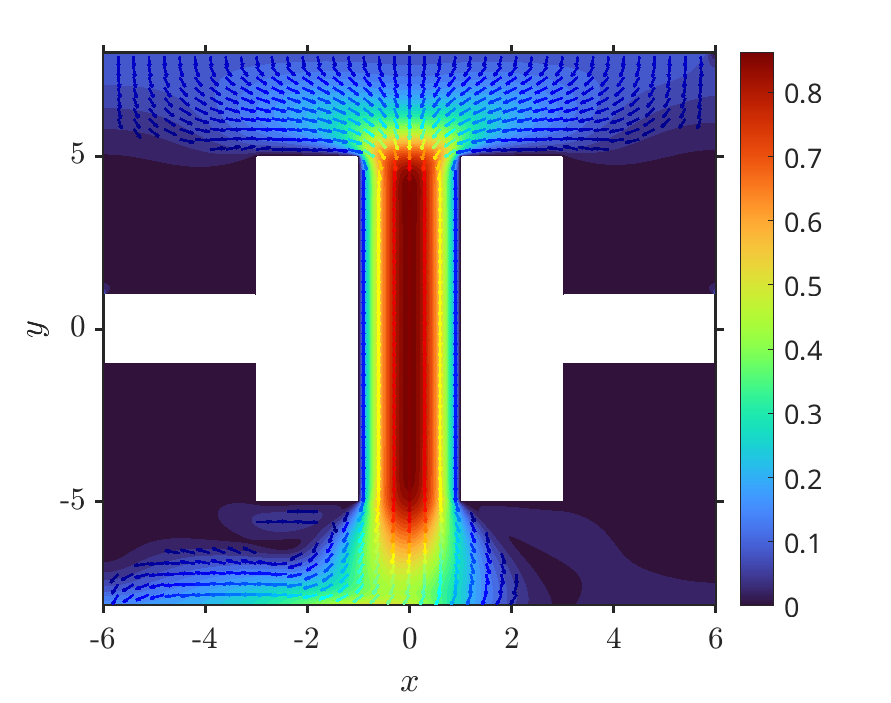}}~~~
	{\includegraphics[height=8.2cm, width=6.5cm]{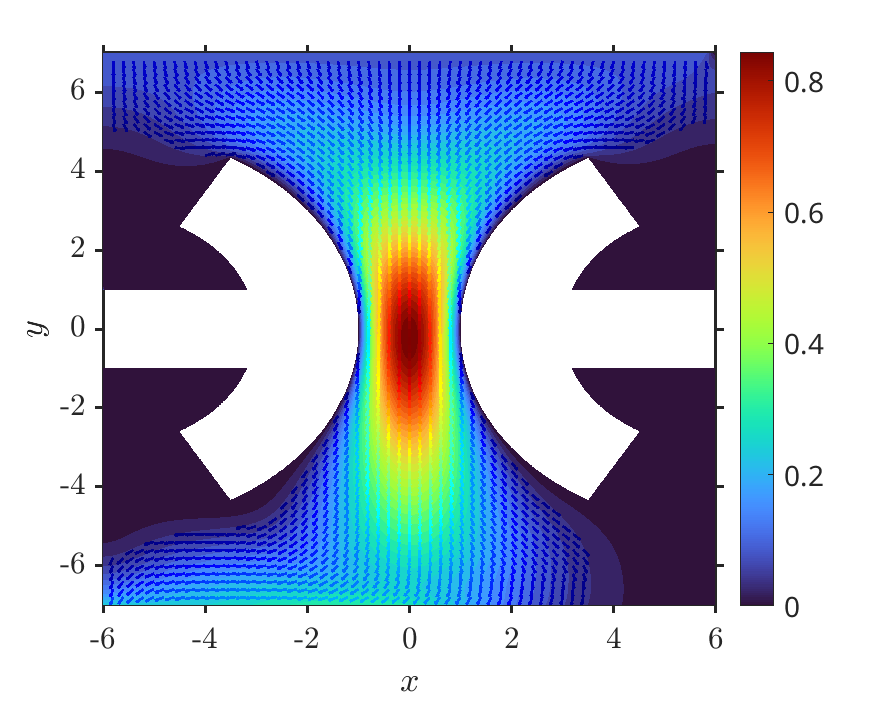}} 
	\caption{ {[Example 3] Contour plot of the speed and velocity vector field of the
		electrically charged fluid in a nanosensor for VEM order $k=2$ with	$\mathbf{E}=[0.1, -0.1]{^T}$}}
	\label{ex3fig1s} 
\end{figure}

\begin{figure}[h]
  \centering
      {\includegraphics[height=5.83cm, width=5cm]{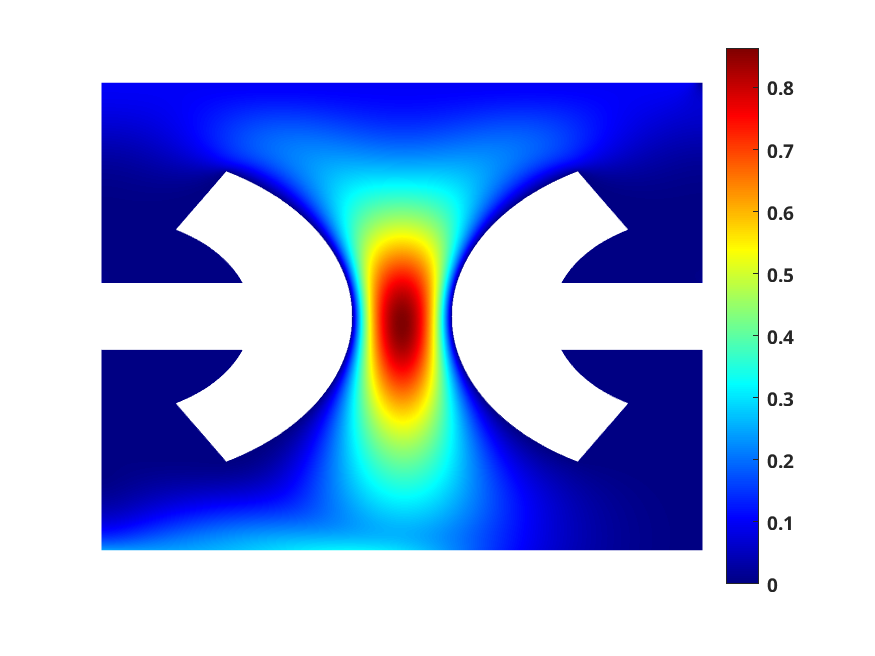}}~~~
      {\includegraphics[height=5.83cm, width=5cm]{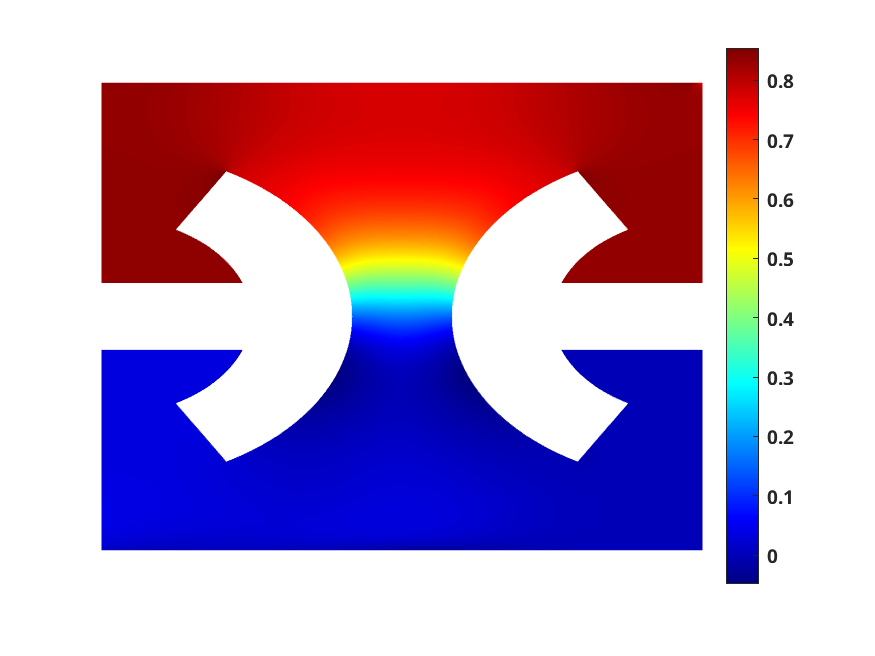}} 
      {\includegraphics[height=5.83cm, width=5cm]{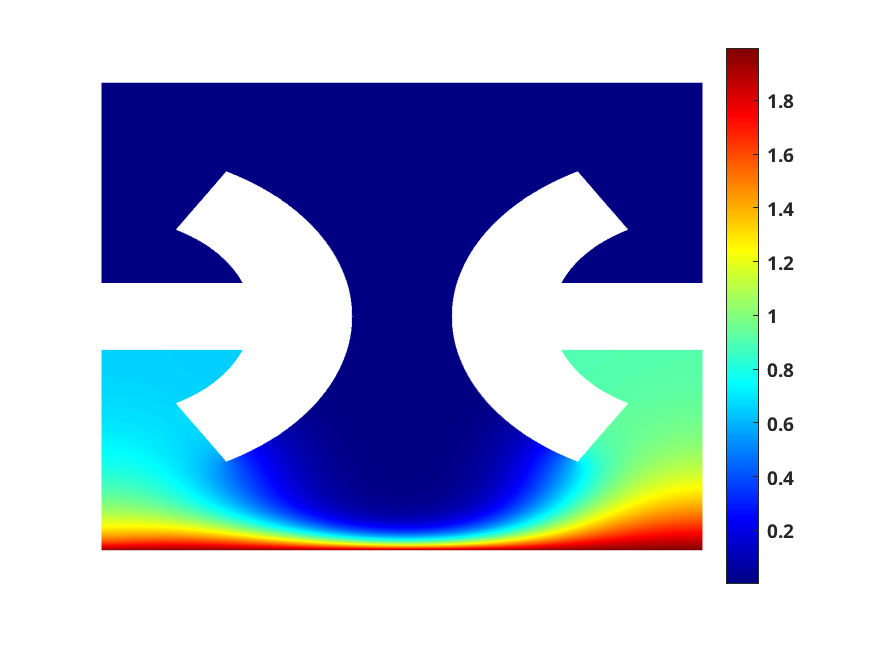}} \\
      {\includegraphics[height=5.83cm, width=5cm]{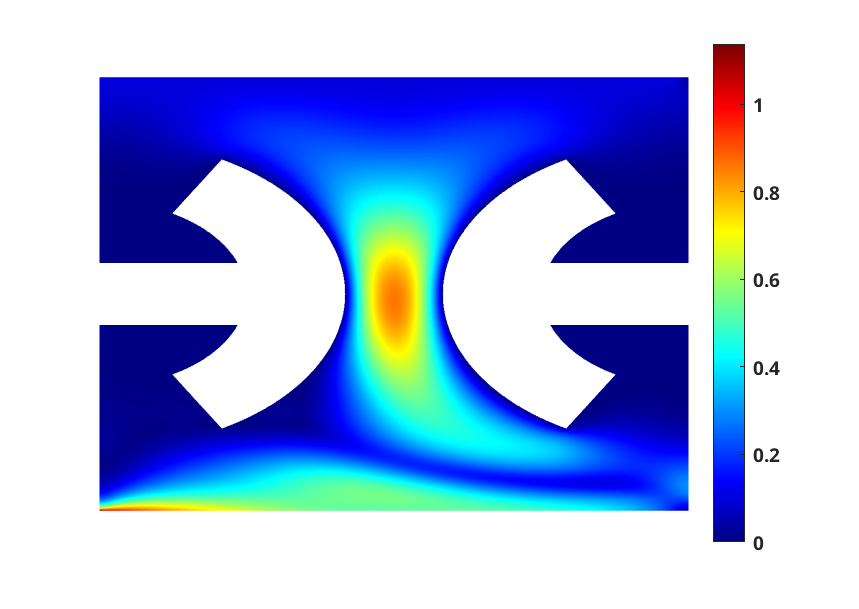}}~~~
      {\includegraphics[height=5.83cm, width=5cm]{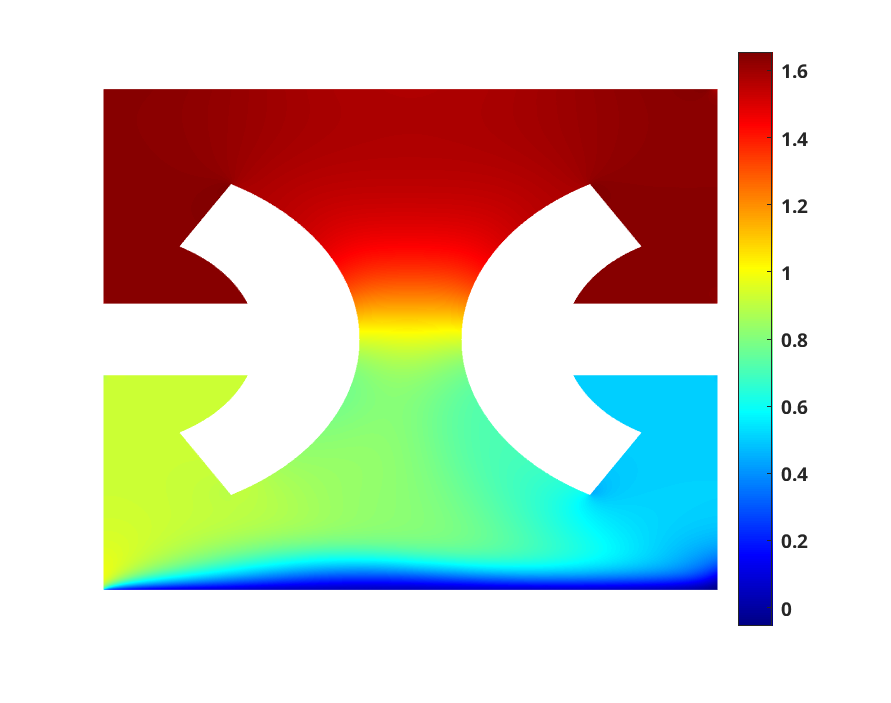}} 
      {\includegraphics[height=5.83cm, width=5cm]{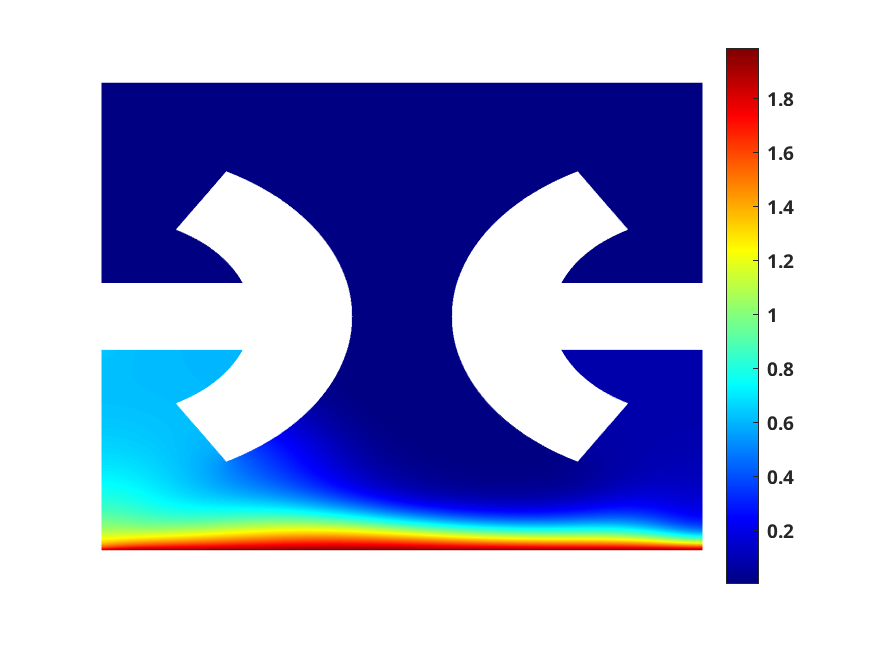}}
      \caption{[Example 3] Snapshots of the flow dynamics of the
        electrically charged fluid in a nanosensor with curved shaped
        obstacles: discrete velocity magnitude, pressure, and
        potential for VEM order $k=2$ with $\mathbf{E}=[0.1, -0.1]{^T}$
        (top) and $\mathbf{E}=[1, -1]{^T}$(bottom).}
      \label{ex3fig4} 
\end{figure}

\noindent
%% \MGT{
\textbf{Physical interpretation.}
The flow patterns observed in Figures~\ref{ex3fig1}--\ref{ex3fig4}
are consistent with well-established electrokinetic transport
phenomena in confined geometries.
The recirculation zones near the T-shaped obstacles arise from the
competition between electro-osmotic slip velocity at charged
boundaries and adverse pressure gradients induced by geometric
constriction.
These recirculation regions are relevant for nanopore sensing
applications, as they create zones of extended residence time that
can enhance biomolecule capture and detection sensitivity.
The slight asymmetry in all fields, caused by the non-axial
electric field orientation, influences charged particle
trajectories---a mechanism exploitable for biomolecule trajectory
control.
The VEM formulation successfully captures these characteristic
electrokinetic phenomena, e.g., electro-osmotic plug flow in the
channel core, boundary-layer-driven recirculation near obstacles, and
smooth potential gradients governed by the Poisson--Boltzmann
coupling, without requiring specialized mesh treatment for the complex
obstacle geometries.

When the field magnitude increases from $\mathbf{E}=[0.1,-0.1]{^T}$ to
$\mathbf{E}=[1,-1]^t$, recirculation intensifies and velocity
magnitudes increase proportionally, consistent with the linear
electro-osmotic velocity--field relationship in the small
Debye--H\"uckel parameter regime.
The curved obstacle configuration produces smoother flow transitions,
smaller recirculation zones, and reduced pressure losses compared to
sharp T-shaped corners---a geometric effect relevant for applications
such as DNA sequencing where consistent translocation velocity is
desirable.
These results confirm that the proposed VEM formulation captures the
essential coupled fluid-electrostatic interactions in realistic
nanopore geometries, with the polygonal mesh framework providing
natural geometric conformity to the obstacle boundaries without
special mesh treatment.
%%}

%% ============================================================
%% Condensed comparison subsection
%% Replaces Section 4 (Computational and geometric advantages)
%% ============================================================

%% \subsection{\MGT{Comparison with finite element methods}}
\subsection{Comparison with finite element methods}
\label{sec-comparison}
\label{sec-4}

\begin{table}[t]
    \centering
  \caption{Comparison of mesh flexibility and geometric
    capabilities}
  \label{tab:mesh-comparison}
  \begin{tabular}{lll}
    \toprule
    \textbf{Feature}
    & \textbf{Taylor--Hood FEM}
    & \textbf{Equal-Order VEM} \\
    \midrule
    Element types
    & Triangles only & Arbitrary polygons \\
    Non-convex elements
    & Not supported & Fully supported \\
    Hanging nodes
    & Constraint equations & Natural treatment \\
    Mesh distortion tolerance
    & Moderate (quality-dependent) & High (projection-based) \\
    Adaptive refinement
    & Transition elements needed & Straightforward \\
    Irregular boundaries
    & Many small elements & Fewer conforming polygons \\
    \bottomrule
  \end{tabular}
\end{table}

The numerical experiments presented above demonstrate optimal
convergence across diverse mesh types and realistic nanopore
geometries.
We now compare the proposed VEM formulation with the Taylor--Hood
finite element approach of AlSohaim et
al.~\cite{alsohaim2025analysis}, which employs
$\PS{k+1}/\PS{k}$ elements on triangular meshes, to
highlight the computational and geometric distinctions between
the two frameworks.

%% Degrees of freedom
For a polygonal mesh $\Omega_h$ with $N_v$ vertices, $N_e$ edges,
and $N_E$ elements, the total degrees of freedom for the VEM
discretization of order $k$ (accounting for velocity, pressure, and
potential, each using the space ${V^h_k(E)}$ from
Section~\ref{subsec-33:discrete:forms}) are:
\begin{align}
  \text{DOF}_{\text{VEM}}
  = 4N_v + 4(k-1)N_e + 4N_E \dim(\PS{k-2}),
  \label{eq:dof-vem}
\end{align}
where $\dim(\PS{k-2}) = (k-1)k/2$ for $k \geq 2$ and equals
zero for $k=1$.
For Taylor--Hood finite elements on a comparable triangular mesh,
the higher-order velocity space $\PS{k+1}$ modifies the count
to:
\begin{align}
  \text{DOF}_{\text{FEM}} =
  4N_v^{\text{FEM}} +
  (4k-2)N_e^{\text{FEM}} +
  N_E^{\text{FEM}}\big(2\dim(\PS{k-2}) +
  2\dim(\mathbb{P}_{k-3})\big).
  \label{eq:dof-fem}
\end{align}
For $k=1$, VEM requires $4N_v$ DOFs while FEM requires
$4N_v + 2N_e$; with typical triangular mesh topology ratios
$N_e \approx 1.5\,N_v$, this yields a reduction of approximately
$43\%$ in total degrees of freedom.
For $k=2$, VEM requires $(4N_v + 4N_e + 4N_E)$ DOFs while FEM
requires $(4N_v + 6N_e + 2N_E)$ DOFs; under the additional
assumption $N_E \approx 0.5\,N_v$, the reduction is approximately
$15\%$.
We note that these ratios depend on mesh topology and element
counts, and the reduction is more pronounced at lower polynomial
orders where the difference in velocity approximation order has a
larger relative impact.

% Implementation and stabilization structure
The equal-order formulation provides two key structural
simplifications relative to mixed finite element methods.
First, all three fields, e.g., velocity, pressure, and potential, are
approximated by the same local virtual element space ${V^h_k(E)}$ with
identical degrees of freedom structure.
The projection operators $\Pi^{\nabla,E}_k$ and $\Pi^{0,E}_k$
defined in \eqref{H1proj}--\eqref{l2proj} are computed once per
element and reused across all field components, eliminating the
need for separate shape function implementations, quadrature rules,
and local-to-global mappings for different approximation orders.
Second, the reformulation of the drag force term
$-\epsilon \Delta \psi \mathbf{E}$ using the transport potential
equation eliminates second-order derivatives from the momentum
residual.
Only first-order derivatives $\nabla \psi_h$ appear in the
stabilization terms $\mathcal{L}_{1,h}$, $\mathcal{L}_{2,h}$, and
$\mathcal{L}_{3,h}$, which are well-defined for functions in the
$H^1$-conforming space $V_h$ and involve only polynomial projected
quantities computable from the degrees of freedom.

%% Geometric flexibility
Table~\ref{tab:mesh-comparison} summarizes the geometric
capabilities distinguishing the two frameworks.
The VEM framework requires only the mild mesh regularity assumption
\textbf{(A1)} from Section~\ref{sec-3}: star-shapedness with
respect to a ball of radius $\geq \delta_0 h_E$ and edge length
$\geq \delta_0 h_E$.
Subject to these constraints, elements may be convex or non-convex,
have any number of edges, and accommodate hanging nodes without
constraint equations.
The convergence studies presented in Examples~\ref{case1}-\ref{case2}
confirm that optimal $\mathcal{O}(h^k)$ rates are achieved across all
these mesh types.

\section{Conclusions}
\label{sec-6}

We have presented an equal-order virtual element method for the
coupled Stokes--Poisson--Boltzmann equations on general polygonal
meshes.
The formulation employs a
$\PS{k}/\PS{k}/\PS{k}$ approximation for velocity,
pressure, and potential, stabilized through a residual-based scheme
that exploits the structure of the drag force coupling to eliminate
second-order derivative terms from the discrete problem.
Well-posedness of the continuous and discrete problems was
established through the Banach and Brouwer fixed-point theorems,
respectively, under explicit smallness conditions on the data, and
optimal a priori error estimates of order $\mathcal{O}(h^k)$ were
derived in the energy norm.
Numerical experiments confirmed these theoretical predictions across
diverse mesh types---including non-convex polygons, Voronoi
tessellations, distorted elements, and meshes with hanging
nodes---and demonstrated the method's applicability to
electro-osmotic flows in nanopore sensors with T-shaped and curved
obstacle geometries.

Several directions for future research emerge from this work.
The formulation can be extended to the coupled
Poisson--Nernst--Planck system to model multiple ionic species and
to time-dependent Stokes--Poisson--Boltzmann equations for dynamic
electrokinetic phenomena.
Development of pressure-robust error estimates, where velocity
errors are independent of pressure regularity, represents a natural
next step; preliminary investigations using $H(\mathrm{div})$-conforming
velocity reconstructions show promise for extending the
pressure-robust framework to the VEM setting, and this work is
currently in progress.
Extension to polyhedral meshes in three dimensions and incorporation
of convective terms through SUPG stabilization for higher Reynolds
number regimes would further broaden applicability to realistic
engineering configurations.

% ----------------------------
% Acknowledgments
% ----------------------------

\section*{Acknowledgments}
G. Manzini is affiliated to GNCS-INdAM, the Italian Gruppo Nazionale
Calcolo Scientifico - Istituto Nazionale di Alta Matematica.
AI tools may have served as writing and editing assistants; however, the authors 
carefully reviewed the manuscript and take full responsibility for the content of this work.

%\clearpage 
\subsection*{Data Availability}
No data was used for the research described in the article.

\section*{Declarations}

\subsection*{Conflict of interest}
The authors declare no conflict of interest during the current study.
G. Manzini is also affiliated with Los Alamos National Laboratory.
However, this work was carried out entirely outside the scope of any
activities, duties, or appointments at Los Alamos National Laboratory
or the U.S. Department of Energy.
Los Alamos National Laboratory and the U.S. Department of Energy had
no role in the conception, execution, funding, or supervision of this
work, and assert no rights, title, or interest, including intellectual
property rights, in the results presented herein.
The views and conclusions expressed are solely those of the authors
and do not represent the views of Los Alamos National Laboratory, the
U.S. Department of Energy, or their employees or contractors.

\section*{Funding}
No funding was received by the Authors to prepare this manuscript.  

\section*{Authors' contribution}
All authors equally contributed to this work.

%Loading bibliography style file
%% \bibliographystyle{unsrt}

% Loading bibliography database
%% \bibliography{refs}

\begin{thebibliography}{10}

\bibitem{Dekker2007}
C.~Dekker.
\newblock Solid-state nanopores.
\newblock {\em Nature Nanotechnology}, 2:209--215, 2007.

\bibitem{alsohaim2025analysis}
A.~AlSohaim, R.~Ruiz-Baier, and S.~Villa-Fuentes.
\newblock Analysis of a finite element method for the
  {S}tokes--{P}oisson--{B}oltzmann equations.
\newblock {\em {The Proceedings of ANZIAM}}, {66: C61--C78, 2024}.







\bibitem{Probstein1994}
R.~F. Probstein.
\newblock {\em Physicochemical Hydrodynamics: An Introduction}.
\newblock Wiley-Interscience, New York, 2nd edition, 1994.

\bibitem{kirby2010}
B.~J. Kirby.
\newblock {\em Micro-and nanoscale fluid mechanics: transport in microfluidic
  devices}.
\newblock Cambridge university press, 2010.

\bibitem{Masliyah2006}
J.~H. Masliyah and S.~Bhattacharjee.
\newblock {\em Electrokinetic and Colloid Transport Phenomena}.
\newblock Wiley-Interscience, 2006.

\bibitem{Biesheuvel2010}
P.~M. Biesheuvel and M.~Z. Bazant.
\newblock Nonlinear dynamics of capacitive charging and desalination by porous
  electrodes.
\newblock {\em Physical Review E}, 81(3):031502, 2010.

\bibitem{Whitesides2006}
G.~M. Whitesides.
\newblock The origins and the future of microfluidics.
\newblock {\em Nature}, 442:368--373, 2006.

\bibitem{chen2007finite}
L.~Chen, M.~Holst, and J.~Xu.
\newblock The finite element approximation of the nonlinear
  {P}oisson--{B}oltzmann equation.
\newblock {\em SIAM J. Numer. Anal.}, 45(6):2298--2320, 2007.

\bibitem{kraus2020reliable}
J.~Kraus, S.~Nakov, and S.~Repin.
\newblock Reliable numerical solution of a class of nonlinear elliptic problems
  generated by the {P}oisson--{B}oltzmann equation.
\newblock {\em Comput. Methods Appl. Math.}, 20(2):293--319, 2020.

\bibitem{holst2012adaptive}
M.~Holst, J.~Mccammon, Z.~Yu, Y.~Zhou, and Y.~Zhu.
\newblock Adaptive finite element modeling techniques for the
  {P}oisson-{B}oltzmann equation.
\newblock {\em Commun. Comput. Phys.}, 11(1):179--214, 2012.

\bibitem{Huangvem}
H.~Linghan, S.~Shi, and Y.~Ying.
\newblock Error analysis of virtual element method for the
  {P}oisson–{B}oltzmann equation.
\newblock {\em J. Numer. Math.}, 33(2):187--210, 2025.

\bibitem{vem1}
{L. Beir\~ao da~Veiga}, F.~Brezzi, A.~Cangiani, G.~Manzini, {L.~D.~Marini}, and
  A.~Russo.
\newblock Basic principles of {V}irtual {E}lement {M}ethods.
\newblock {\em Math. Models Methods Appl. Sci.}, 23:199--214, 2013.

\bibitem{vem2}
{L. Beir\~ao da~Veiga}, F.~Brezzi, and {L.~D.~Marini}.
\newblock Virtual elements for linear elasticity problems.
\newblock {\em SIAM J. Numer. Anal.}, 51:794--812, 2013.

\bibitem{mvem19}
A.~Gain, C.~Talischi, and G.~Paulino.
\newblock On the virtual element method for three-dimensional linear elasticity
  problems on arbitrary polyhedral meshes.
\newblock {\em Comput. Methods Appl. Mech. Eng.}, 282:132--160, 2014.

\bibitem{adak2024nonconforming}
D.~Adak, G.~Manzini, and S.~Natarajan.
\newblock Nonconforming virtual element methods for velocity-pressure {S}tokes
  eigenvalue problem.
\newblock {\em Appl. Numer. Math.}, 202:42--66, 2024.

\bibitem{mvem16}
{L. Beir\~ao da~Veiga}, F.~Dassi, and G.~Vacca.
\newblock Vorticity-stabilized virtual elements for the {O}seen equation.
\newblock {\em Math. Models Methods Appl. Sci.}, 31(14):3009--3052, 2021.

\bibitem{mvem9}
{L. Beir\~ao da~Veiga}, C.~Lovadina, and G.~Vacca.
\newblock Virtual elements for the {N}avier-{S}tokes problem on polygonal
  meshes.
\newblock {\em SIAM J. Numer. Anal.}, 56(3):1210--1242, 2018.

\bibitem{mvem11}
{L. Beir\~ao da~Veiga}, C.~Lovadina, and G.~Vacca.
\newblock Divergence free virtual elements for the {S}tokes problem on
  polygonal meshes.
\newblock {\em ESAIM: Math. Model. Numer. Anal.}, 51(2):509--535, 2017.

\bibitem{mvem12}
G.~Vacca.
\newblock An ${H}^1$-conforming virtual element for {D}arcy and {B}rinkman
  equations.
\newblock {\em Math. Models Methods Appl. Sci.}, 28(01):159--194, 2018.

\bibitem{mvem3}
{P.~F.~Antonietti}, G.~Vacca, and M.~Verani.
\newblock Virtual element method for the {N}avier-{S}tokes equation coupled
  with the heat equation.
\newblock {\em IMA J. Numer. Anal.}, pages 1--34, 11 2022.

\bibitem{adak2021virtual}
D.~Adak, D.~Mora, S.~Natarajan, and A.~Silgado.
\newblock A virtual element discretization for the time dependent
  {N}avier--{S}tokes equations in stream-function formulation.
\newblock {\em ESAIM: Math. Model. Numer. Anal.}, 55(5):2535--2566, 2021.

\bibitem{mishra2025supg}
S.~Mishra, E.~Natarajan, and S.~Natarajan.
\newblock A {SUPG}-stabilized virtual element method for the {N}avier--{S}tokes
  equation: approximations of branches of non-singular solutions.
\newblock {\em Adv. Comput. Math.}, 51(4):34, 2025.

\bibitem{da2022virtual}
{L. Beir\~ao da~Veiga}, F.~Dassi, G.~Manzini, and L.~Mascotto.
\newblock Virtual elements for {M}axwell's equations.
\newblock {\em Computers \& Mathematics with Applications}, 116:82--99, 2022.

\bibitem{mvem7}
F.~Brezzi and {L.~D.~Marini}.
\newblock Virtual element methods for plate bending problems.
\newblock {\em Comput. Methods Appl. Mech. Eng.}, 253:455--462, 2013.

\bibitem{aldakheel2018phase}
F.~Aldakheel, B.~Hudobivnik, A.~Hussein, and P.~Wriggers.
\newblock Phase-field modeling of brittle fracture using an efficient virtual
  element scheme.
\newblock {\em Comput. Methods Appl. Mech. Eng.}, 341:443--466, 2018.

\bibitem{benedetto2018virtual}
M.~Benedetto, A.~Caggiano, and G.~Etse.
\newblock Virtual elements and zero thickness interface-based approach for
  fracture analysis of heterogeneous materials.
\newblock {\em Comput. Methods Appl. Mech. Eng.}, 338:41--67, 2018.

\bibitem{antonietti2017virtual}
{P.~F.~Antonietti}, M.~Bruggi, S.~Scacchi, and M.~Verani.
\newblock On the virtual element method for topology optimization on polygonal
  meshes: {A} numerical study.
\newblock {\em Comput. Math. Appl.}, 74(5):1091--1109, 2017.

\bibitem{chi2020virtual}
H.~Chi, A.~Pereira, I.~Menezes, and G.~Paulino.
\newblock Virtual element method ({VEM})-based topology optimization: an
  integrated framework.
\newblock {\em Struct. Multidiscip. Optim.}, 62(3):1089--1114, 2020.

\bibitem{vem42}
{L. Beir\~ao da~Veiga}, F.~Brezzi, F.~Dassi, {L.~D.~Marini}, and A.~Russo.
\newblock Virtual element approximation of 2d magnetostatic problems.
\newblock {\em Comput. Meth. Appl. Mech. Engrg.}, 327:173--195, 2017.

\bibitem{vem028}
J.~Guo and M.~Feng.
\newblock A new projection-based stabilized virtual element method for the
  {S}tokes problem.
\newblock {\em J. Sci. Comput.}, 85(1):16, 2020.

\bibitem{mvem14}
D.~Irisarri and G.~Hauke.
\newblock Stabilized virtual element methods for the unsteady incompressible
  {N}avier-{S}tokes equations.
\newblock {\em Calcolo}, 56(4):38, 2019.

\bibitem{mvem15}
Y.~Li, M.~Feng, and Y.~Luo.
\newblock A new local projection stabilization virtual element method for the
  {O}seen problem on polygonal meshes.
\newblock {\em Adv. Comput. Math.}, 48(3):30, 2022.

\bibitem{mishra2024unified}
S.~Mishra and E.~Natarajan.
\newblock A unified local projection-based stabilized virtual element method
  for the coupled {S}tokes--{D}arcy problem.
\newblock {\em Adv. Comput. Math.}, 50(6):106, 2024.

\bibitem{mishra2025equal}
S.~Mishra and E.~Natarajan.
\newblock An equal-order virtual element framework for the coupled
  {S}tokes-{T}emperature equation with nonlinear viscosity.
\newblock {\em J. Sci. Comput.}, 104(2):73, 2025.

\bibitem{bookgirault}
V.~Girault and P.~Raviart.
\newblock {\em Finite Element Methods for Navier-Stokes Equations. Theory and
  Algorithm}.
\newblock Springer-Verlag, Berlin Heidelberg NewYork, 1986.

\bibitem{vem22}
{L. Beir\~ao da~Veiga}, F.~Brezzi, {L.~D.~Marini}, and A.~Russo.
\newblock Virtual element method for general second-order elliptic problems on
  polygonal meshes.
\newblock {\em Math. Models Methods Appl. Sci.}, 26(04):729--750, 2016.

\bibitem{vem25}
B.~Ahmad, A.~Alsaedi, F.~Brezzi, {L.~D.~Marini}, and A.~Russo.
\newblock Equivalent projectors for virtual element methods.
\newblock {\em Comput. Math. Appl.}, 66(3):376--391, 2013.

\bibitem{scott}
S.~Brenner and L.~Scott.
\newblock {\em The Mathematical Theory of Finite Element Methods}.
\newblock 3rd ed., Springer, New York, 2008.

\bibitem{vem45}
A.~Cangiani, E.~Georgoulis, T.~Pryer, and O.~Sutton.
\newblock A posteriori error estimates for the virtual element method.
\newblock {\em Numer. Math.}, 137:857--892, 2018.

\bibitem{vem28}
L.~Chen and J.~Huang.
\newblock Some error analysis on virtual element methods.
\newblock {\em Calcolo}, 55, 2017.

\bibitem{vem28m}
S.~Mishra and E.~Natarajan.
\newblock Local projection stabilization virtual element method for the
  convection-diffusion equation with nonlinear reaction term.
\newblock {\em Comput. Math. Appl.}, 152:181--198, 2023.

\bibitem{mitscha2017adaptive}
G.~Mitscha-Baude, A.~Buttinger-Kreuzhuber, G.~Tulzer, and C.~Heitzinger.
\newblock Adaptive and iterative methods for simulations of nanopores with the
  {PNP}--{S}tokes equations.
\newblock {\em J. Comput. Phys.}, 338:452--476, 2017.

\end{thebibliography}

\end{document}